
\RequirePackage{fix-cm}
\documentclass[twocolumn]{svjour3}          
\smartqed  

\usepackage{amsmath}
\usepackage{amssymb}
\usepackage{amsthm}
\usepackage{graphicx}
\usepackage{mathrsfs}
\usepackage{hyperref}
\usepackage{color}
\usepackage{multirow}
\usepackage{array}
\usepackage{booktabs}
\usepackage{enumitem}
\usepackage[misc,geometry]{ifsym} 

\DeclareMathOperator\diag{diag}

\DeclareMathOperator\Span{span}
\DeclareMathOperator\spect{Spect}

\theoremstyle{definition}

\begin{document}

\title{Nonlinear analysis of forced mechanical systems with internal resonance using spectral submanifolds, Part I: Periodic response and forced response curve}

\titlerunning{Nonlinear analysis of systems with internal resonance using spectral submanifolds, Part I}

\author{Mingwu Li         \and
        Shobhit Jain \and 
        George Haller
}


\institute{M. Li (\Letter)\and S. Jain\and G. Haller \at
              Institute for Mechanical Systems, ETH Z\"{u}rich \\
              Leonhardstrasse 21, 8092 Z\"{u}rich, Switzerland\\
              \email{mingwli@ethz.ch}           
}

\date{Received: date / Accepted: date}

\maketitle

\begin{abstract}
We show how spectral submanifold theory can be used to construct reduced-order models for harmonically excited mechanical systems with internal resonances. Efficient calculations of periodic and quasi-periodic responses with the reduced-order models are discussed in this paper and its companion, Part II, respectively. The dimension of a reduced-order model is determined by the number of modes involved in the internal resonance, independently of the dimension of the full system. The periodic responses of the full system are obtained as equilibria of the reduced-order model on spectral submanifolds. The forced response curve of periodic orbits then becomes a manifold of equilibria, which can be easily extracted using parameter continuation. To demonstrate the effectiveness and efficiency of the reduction, we compute the forced response curves of several high-dimensional nonlinear mechanical systems, including the finite-element models of a von K\'arm\'an beam and a plate.
\end{abstract}

\keywords{Invariant manifolds \and Reduced-order models \and  Spectral submanifolds \and Internal resonances \and Modal interactions}

\section{Introduction}
The forced response curve (FRC) of a mechanical system under harmonic excitation gives the amplitude of the periodic response of the system as a function of the excitation frequency. The FRC of a nonlinear system is significantly different from that of the linear part of the system, providing key insights into the nature of nonlinearities of the system. In particular, when a mechanical system has an internal resonance, the nonlinear behavior is often intriguingly complex~\cite{nayfeh1989modal}. Specifically, internal resonances tend to lead to energy transfer between modes~\cite{nayfeh1988undesirable,vakakis2008nonlinear,kurt2014effect,chen2017direct}, saturation~\cite{nayfeh1974nonlinear,nayfeh1988undesirable,balachandran1991observations,wood2018saturation}, localization~\cite{vakakis2001normal,kurt2014effect} and frequency stabilization~\cite{antonio2012frequency}.

The periodic orbit of a nonlinear mechanical system can be computed with various numerical methods. As the simplest method, direct numerical integration can be performed to find an asymptotically \emph{stable} periodic orbit in the steady state response if the initial condition of the forward simulation is in the basin of attraction of such a periodic orbit. Unstable periodic orbits arising in mechanics problems of practical relevance are of saddle types, and hence cannot be found in either forward or backward direct numerical simulations. In the shooting method~\cite{peeters2009nonlinearII,keller2018numerical}, the initial state is updated iteratively such that periodicity condition is satisfied. Therefore, the shooting method can locate unstable periodic orbits as well. 

To avoid numerical integration of the full system, the periodic orbit can be found with the collocation method~\cite{ascher1995numerical,dankowicz2013recipes} and the harmonic balance method~\cite{von2001harmonic,detroux2015harmonic,krack2019harmonic}. In the collocation method, the periodic orbit is approximated as a piecewise smooth function of time, expressed on each subinterval as a Lagrange polynomial, parametrized by the unknowns at the base points. The equation of motion is satisfied at a set of collocation nodes. In the harmonic balance method, the periodic orbit is approximated by a truncated Fourier series with unknown coefficients. These coefficients are solved from a set of nonlinear algebraic equations obtained by balancing the harmonics in the equation of motion.

The FRCs of low-dimensional mechanical systems can be effectively obtained from the above methods. However, mechanical systems generated from finite elements (FE) models generally contain thousands of degrees of freedom. Indeed, internal resonances have been observed in structural elements such as beams~\cite{nayfeh1974nonlinear,shaw2016periodic}, cables~\cite{kang2017dynamic}, plates~\cite{chang1993non,bilal2020experiments} and shells~\cite{thomas2005non,thomas2007non}. For such high-dimensional systems, the computational costs of the numerical methods we have surveyed are prohibitive and hence these methods are impractical. Specifically, direct numerical integration can take excessively long under weak damping, the memory need is significant for the collocation method, and the harmonic balance method is impacted by the difficulty of finding zeros for very large dimensional, nonlinear systems of algebraic equations.

\begin{sloppypar}
To reduce the computational cost, one often reduces high-dimensional systems to lower-dimensional models whose FRC can be extracted efficiently. For linear systems, decomposition into \emph{normal modes} provides a powerful tool to derive reduced-order models. For nonlinear systems, various definitions of \emph{nonlinear normal modes} (NNMs) have been developed. Specifically, Rosenberg~\cite{rosenberg1966nonlinear} defines a NNM as a synchronous periodic orbit of a \emph{conservative} system. Shaw \& Pierre~\cite{shaw1993normal} define a NNM as an \emph{invariant manifold} tangent at the origin to a linear modal subspace for a \emph{dissipative} system. It follows that the NNM is the nonlinear continuations of the linear modal subspace and hence can be used for model order reduction. Shaw and his co-workers have used Garlerkin-based approaches to calculate such NNMs for dispative systems~\cite{pesheck2002new}, with the consideration of {internal resonances}~\cite{jiang2005construction} and {harmonic excitation}~\cite{jiang2005nonlinear}, and derived reduced-order models using the obtained NNMs. 
\end{sloppypar}

\begin{sloppypar}
It has been observed that the Shaw–Pierre-type invariant surfaces are not unique even in the linearized system~\cite{neild2015use}. While there are generally infinitely many Shaw-Pierre-type invariant manifolds for each modal subspace, there exists a unique smoothest one under appropriate non-resonance conditions, as pointed out by Haller \& Ponsioen~\cite{haller2016nonlinear}. They define the smoothest invariant manifold to a spectral subspace (i.e., a direct sum of modal subspaces) as the \emph{spectral submanifold} (SSM) associated with the spectral subspace. Parameterization methods with tensor-notation~\cite{ponsioen2018automated} and multi-index notation~\cite{ponsioen2020model} have been developed to efficiently compute such SSMs. The reduced-order model for a particular mode of interest can be derived with the corresponding two-dimensional SSM. Such a reduced-order model enables explicit extraction of the backbone curve~\cite{szalai2017nonlinear,breunung2018explicit} and the FRC~\cite{breunung2018explicit,ponsioen2019analytic,ponsioen2020model} around the particular mode. In addition, isolated FRCs, namely, \emph{isolas}, can be analytically predicted with such a reduced-order model~\cite{ponsioen2019analytic}.
\end{sloppypar}

\begin{sloppypar}
Two main limitations of SSM computation in the above works are (i) reliance on the equations of motion written in the eigenbasis of the linearized systems, which is out of reach for FE problems involving very large number of degrees of freedom, and (ii) the dimension of SSM is restricted to two. Addressing these limitations, Jain \& Haller~\cite{SHOBHIT} have recently developed a computational methodology that enables local approximations to SSMs of \emph{arbitrary dimensions} up to \emph{arbitrary orders} of accuracy using only the knowledge of eigenvectors associated to the master modal subspace. A numerical implementation of these results is available in the open-source MATLAB package, SSMTool-2.0~\cite{ssmtool2}, which is capable of treating very high-dimensional finite element applications~\cite{SHOBHIT}. Model reduction to SSMs for systems with internal resonances, however, have not yet been addressed, which motivates our current study.

An alternative procedure for model reduction of nonlinear systems is the method of normal form. This method applies successive near-identity transformations to the equations of motion to remove non-resonant terms, yielding simplified equations of motion which contain only the essential (resonant) terms. Touz{\'e} and Amabili~\cite{touze2006nonlinear} have used the method of normal form first to derive reduced-order models for harmonically forced structures. These reduced-order models are obtained by restricting the truncated normal form to its invariant subspaces aligned with the modal subspaces of the linearized system. Hence, this procedure requires the full system to be expressed in its modal basis. Similarly, Neild \& Wagg~\cite{neild2011applying} applied the method of normal form for second-order systems directly.  The simplified dynamics from the normal form procedure enables analytical prediction of backbone curves~\cite{cammarano2014bifurcations} as well as FRCs~\cite{touze2006nonlinear} for systems with internal resonance. Recently, Vizzaccaro et al.~\cite{vizzaccaro2020direct} and Opreni et al.~\cite{opreni2021model} computed the reduced-order models of \cite{touze2006nonlinear} directly from physical coordinates up to cubic order of truncation. These procedures uses the same SSM parametrization approach put forward in~\cite{haller2016nonlinear,ponsioen2018automated,veraszto2020explicit,ponsioen2020model}, but is limited to geometric nonlinearities up to cubic order and to linear Rayleigh damping (cf. Jain \& Haller~\cite{SHOBHIT}). 
\end{sloppypar}

\begin{sloppypar}
\textcolor{black}{The objective of this paper is to derive reduced-order models for harmonically excited mechanical systems with internal resonances using SSMs and to extract the FRCs of such systems up to arbitrary orders of approximation. The rest of this paper is organized as follows.} Section~\ref{sec:setup} details the setup of mechanical systems. In section~\ref{sec:ssm-theory}, SSM-based reduction is discussed for systems with internal resonance. Specifically, we consider a system with $m$ of its natural frequencies satisfying a certain internal resonance relation. Then, the reduced-order model on a resonant SSM is $2m$-dimensional, independently of the dimension of the original system. Section~\ref{sec:comp-ssm} describes the computational procedure for resonant SSMs. In section~\ref{sec:redyn-ssm}, the reduced dynamics on the SSM is analyzed in detail. As we will see, the equilibrium points of the slow-phase reduced dynamics mark periodic orbits of the full system. The stability of the periodic orbits is the same as that of the equilibrium points. It follows that the extraction of the FRC of the full system is reduced to the computation of the manifold of equilibria in the reduced-order vector field, which can be easily and efficiently performed. We discuss a MATLAB toolbox developed to perform such calculations. Section~\ref{sec:examp} demonstrates the power of this toolbox with a list of examples, including von K\'arm\'an beam and plate structures with discretizations up to 240,000 degrees of freedom.
In Part II of this paper, we will focus on the bifurcation of periodic orbits, including quasi-periodic tori bifurcating from periodic orbits.
\end{sloppypar}

\section{System setup}
\label{sec:setup}
\begin{sloppypar}
We consider a periodically forced nonlinear mechanical system
\begin{equation}
\label{eq:eom-second-full}
\boldsymbol{M}\ddot{\boldsymbol{x}}+\boldsymbol{C}\dot{\boldsymbol{x}}+\boldsymbol{K}\boldsymbol{x}+\boldsymbol{f}(\boldsymbol{x},\dot{\boldsymbol{x}})=\epsilon \boldsymbol{f}^{\mathrm{ext}}(\Omega t),\quad 0<\epsilon\ll1
\end{equation}
where $\boldsymbol{x}\in\mathbb{R}^n$ is the generalized displacement vector; $\boldsymbol{M}\in\mathbb{R}^{n\times n}$ is the positive definite mass matrix; $\boldsymbol{C},\boldsymbol{K}\in\mathbb{R}^{n\times n}$ are the damping and stiffness matrices; $\boldsymbol{f}(\boldsymbol{x},\dot{\boldsymbol{x}})$ is a $C^r$ smooth nonlinear function such that
$\boldsymbol{f}(\boldsymbol{x},\dot{\boldsymbol{x}})\sim \mathcal{O}(|\boldsymbol{x}|^2,|\boldsymbol{x}||\dot{\boldsymbol{x}}|,|\dot{\boldsymbol{x}}|^2)$; and $\epsilon \boldsymbol{f}^{\mathrm{ext}}(\Omega t)$ denotes external harmonic excitation.
\end{sloppypar}

The above second-order system can be transformed into a first-order system as follows
\begin{equation}
\label{eq:full-first}
\boldsymbol{B}\dot{\boldsymbol{z}}	=\boldsymbol{A}\boldsymbol{z}+\boldsymbol{F}(\boldsymbol{z})+\epsilon\boldsymbol{F}^{\mathrm{ext}}({\Omega t})
\end{equation}
where
\begin{gather}
\label{eq:zABF}
\boldsymbol{z}=\begin{pmatrix}\boldsymbol{x}\\\dot{\boldsymbol{x}}\end{pmatrix},\,\,
\boldsymbol{A}=\begin{pmatrix}-\boldsymbol{K} 
& \boldsymbol{0}\\\boldsymbol{0} & \boldsymbol{M}\end{pmatrix},\,\,
\boldsymbol{B}=\begin{pmatrix}\boldsymbol{C} 
& \boldsymbol{M}\\\boldsymbol{M} & \boldsymbol{0}\end{pmatrix},\nonumber\\
\boldsymbol{F}(\boldsymbol{z})=\begin{pmatrix}\boldsymbol{-\boldsymbol{f}(\boldsymbol{x},\dot{\boldsymbol{x}})}\\\boldsymbol{0}\end{pmatrix},\,\,
\boldsymbol{F}^{\mathrm{ext}}(\Omega t) = \begin{pmatrix}\boldsymbol{f}^{\mathrm{ext}}(\Omega t)\\\boldsymbol{0}\end{pmatrix}.
\end{gather}
\textcolor{black}{One benefit of the first-order formulation~\eqref{eq:zABF} is that the coefficient matrices $\boldsymbol{A}$ and $\boldsymbol{B}$ are symmetric when the matrices $\boldsymbol{M}, \boldsymbol{C}, \boldsymbol{K}$ are symmetric, which is often the case for mechanics problems. Nonetheless, we formulate our computation procedure for the general first-order system~\eqref{eq:full-first}.}

Solving the linear part of~\eqref{eq:full-first} leads to the generalized eigenvalue problem
\begin{equation}
\boldsymbol{A}\boldsymbol{v}_j=\lambda_j\boldsymbol{B}\boldsymbol{v}_j,\quad \boldsymbol{u}_j^\ast \boldsymbol{A}=\lambda_j \boldsymbol{u}_j^\ast \boldsymbol{B},
\end{equation}
where $\lambda_j$ is a generalized eigenvalue and $\boldsymbol{v}_j$ and $\boldsymbol{u}_j$ are the corresponding \emph{right} and \emph{left} eigenvectors, respectively. This eigenvalue problem has $2n$ eigenvalues, which can be sorted in the decreasing order based on their real parts
\begin{equation}
\mathrm{Re}(\lambda_{2n})\leq\mathrm{Re}(\lambda_{2n-1})\leq\cdots\leq\mathrm{Re}(\lambda_{1})<0.
\label{eq:eig_sort}
\end{equation}
\textcolor{black}{In this work}, we have assumed that the real parts of all eigenvalues are strictly less than zero and hence the equilibrium point of the linearized system $\boldsymbol{B}\dot{\boldsymbol{z}}=\boldsymbol{A}\boldsymbol{z}$ is asymptotically stable. 
\begin{remark}
We have listed all eigenvalues here for completeness. However, as we will see, it is not necessary to calculate all eigenvalues in SSM analysis because \textcolor{black}{the computation procedure of SSM proposed} in~\cite{SHOBHIT} is used in this study. \textcolor{black}{In this procedure, invariant manifolds and their reduced dynamics are computed in physical coordinates using only the master modes associated with the invariant manifold.}
\end{remark}
\textcolor{black}{
\begin{remark}
We sort the eigenvalues~\eqref{eq:eig_sort} based on their real parts following~\cite{haller2016nonlinear}. This ordering is useful in identifying the slowest decaying modes. The SSMs constructed around the slowest modes are the most relevant for model reduction as they attract nearby full system trajectories~\cite{haller2016nonlinear}. To this end, the \texttt{eigs} routine in \textsc{matlab} can be used to compute a small subset of eigenmodes with the smallest real parts. For the commonly employed Rayleigh damping model in structural dynamics, i.e.,
\begin{equation}
\label{eq:RayleighDamping}
\boldsymbol{C}=\alpha\boldsymbol{M}+\beta\boldsymbol{K},
\end{equation}
the eigenvalues of the linear system are given by
\begin{align}
\label{eq:weakDampFreq}
\lambda_{2i-1,2i}& =-\frac{\alpha+\beta\omega_i^2}{2}\pm\mathrm{i}\omega_i\sqrt{1-
\left(\frac{\alpha}{2\omega_i}+\frac{\beta\omega_i}{2}\right)^2},
\end{align}
where $\omega_i$ denotes the $i$-th natural frequency of the undamped linear system. We note that with  $0\leq\alpha\ll\omega_i$ and $0<\beta\ll1$, i.e., under light damping, the ordering~\eqref{eq:eig_sort} provides the commonly used ordering of increasing natural frequencies.
\end{remark}
}

\section{Non-autonomous SSM for systems with internal resonance}
\label{sec:ssm-theory}
We consider the following $2m$-dimensional \emph{master}  spectral subspace 
\begin{equation}
\mathcal{E}=\Span\{\boldsymbol{v}^\mathcal{E}_1,\bar{\boldsymbol{v}}^\mathcal{E}_1,\cdots,\boldsymbol{v}^\mathcal{E}_m,\bar{\boldsymbol{v}}^\mathcal{E}_m\}.
\end{equation}
We assume that $\mathcal{E}$ is underdamped, i.e., its spectrum is of the following
\begin{equation}
\spect(\mathcal{E}) = \{\lambda^\mathcal{E}_1,\bar{\lambda}^\mathcal{E}_1,\cdots,\lambda^\mathcal{E}_m,\bar{\lambda}^\mathcal{E}_m\}
\end{equation}
with $\mathrm{Im}(\lambda_j^\mathcal{E})\neq0$ for $j=1,\cdots,m$. \textcolor{black}{We expect the spectral subspace $\mathcal{E}$ to be composed of internally resonant modes of the system. As such, the eigenvalues in $\spect(\mathcal{E})$ may be any arbitrary subset of the $2n$ eigenvalues in the ordering \eqref{eq:eig_sort}.}

We further assume that the algebraic multiplicity of each eigenvalue in $\spect(\mathcal{E})$ is equal to the geometric multiplicity of the eigenvalue. The eigenvectors are then chosen such that
\begin{equation}
\label{eq:ortho-norm-eigenvectors}
\left(\boldsymbol{u}_i^{\mathcal{E}}\right)^\ast\boldsymbol{B}\boldsymbol{v}_j^{\mathcal{E}}=\delta_{ij},\,\, \left(\bar{\boldsymbol{u}}_i^{\mathcal{E}}\right)^\ast\boldsymbol{B}\boldsymbol{v}_j^{\mathcal{E}}=0,\quad 1\leq i,j\leq m.
\end{equation}
\textcolor{black}{Under the assumption of small damping, we have small real parts for the eigenvalues of lower-frequency modes. In the case of internal resonance, this results in (near) resonances among the imaginary parts of the eigenvalues corresponding to the internally resonant modes. To this end,} we allow for the following type of (near) \emph{inner} resonances
\begin{equation}
\label{eq:res-inner}
    \lambda_i^\mathcal{E}\approx\boldsymbol{l}\cdot\boldsymbol{\lambda}^\mathcal{E}+\boldsymbol{j}\cdot\bar{\boldsymbol{\lambda}}^\mathcal{E},\quad \bar{\lambda}_i^\mathcal{E}\approx\boldsymbol{j}\cdot\boldsymbol{\lambda}^\mathcal{E}+\boldsymbol{l}\cdot\bar{\boldsymbol{\lambda}}^\mathcal{E}
\end{equation}
for some $i\in\{1,\cdots,m\}$, where $\boldsymbol{l},\boldsymbol{j}\in\mathbb{N}_0^m,\,\, |\boldsymbol{l}+\boldsymbol{j}|:=\sum_{k=1}^m (l_k+j_k)\geq2$, and
\begin{equation}
\boldsymbol{\lambda}^\mathcal{E}=(\lambda^\mathcal{E}_1,\cdots,\lambda^\mathcal{E}_m).
\end{equation}

Following Haller and Ponsioen~\cite{haller2016nonlinear}, we define a \emph{periodic spectral submanifold} (SSM) with period ${2\pi}/{\Omega}$, $\mathcal{W}(\mathcal{E},\Omega t)$, corresponding to the master spectral subspace $\mathcal{E}$ as a $2m$-dimensional \emph{invariant} manifold to the nonlinear system~\eqref{eq:full-first} such that $\mathcal{W}(\mathcal{E},\Omega t)$
\begin{enumerate}[label=(\roman*)]
\item perturbs smoothly from $\mathcal{E}$ at the trivial equilibrium point $\boldsymbol{z}=0$ under the addition of nonlinear terms and external excitation in~\eqref{eq:full-first}, and
\item is strictly smoother than any other periodic invariant manifolds with period ${2\pi}/{\Omega}$ that satisfies (i).
\end{enumerate}

The existence and uniqueness of such SSMs have been investigated in~\cite{haller2016nonlinear}. We summarize the main results in the following theorem.

\begin{theorem}
\label{th:SSM-existence-uniqueness}
Assume the \emph{non-resonance} condition
\begin{gather}
\boldsymbol{a}\cdot\mathrm{Re}(\boldsymbol{\lambda}^\mathcal{E})+\boldsymbol{b}\cdot\mathrm{Re}(\bar{\boldsymbol{\lambda}}^\mathcal{E})\neq \mathrm{Re}(\lambda_k),\nonumber\\
\forall\,\,\lambda_k\in\spect(\boldsymbol{\Lambda})\setminus\spect(\mathcal{E}),\nonumber\\
\forall\,\,\boldsymbol{a},\boldsymbol{b}\in\mathbb{N}_0^m,\,\,2\leq |\boldsymbol{a}+\boldsymbol{b}|\leq\Sigma(\mathcal{E}),
\end{gather}
where $|\boldsymbol{a}+\boldsymbol{b}|=\sum_{k=1}^m(a_k+b_k)$ and $\Sigma(\mathcal{E})$ is the \emph{absolute spectral quotient} of $\mathcal{E}$, defined as
\begin{equation}
\Sigma(\mathcal{E}) = \mathrm{Int}\left(\frac{\min_{\lambda\in\spect(\boldsymbol{\Lambda})}\mathrm{Re}\lambda}{\max_{\lambda\in\spect(\mathcal{E})}\mathrm{Re}\lambda}\right).
\end{equation}
Then, for \textcolor{black}{$\epsilon>0$} small enough, the following hold for system~\eqref{eq:full-first}:
\begin{enumerate}[label=(\roman*)]
\item \textcolor{black}{There exists a $2m$-dimensional, time-periodic SSM $\mathcal{W}(\mathcal{E},\Omega t)$ that depends smoothly on $\epsilon$,}\\
\item \textcolor{black}{The SSM $\mathcal{W}(\mathcal{E},\Omega t)$ is unique among all $C^{\Sigma(\mathcal{E})+1}$ invariant manifolds satisfying (i)}
\item $\mathcal{W}(\mathcal{E},\Omega t)$ can be viewed as an embedding \textcolor{black}{of an open set in the reduced coordinates $(\boldsymbol{p},\phi)$} into the phase space of system~\eqref{eq:full-first} via the map
\begin{equation}
\boldsymbol{W}_{\epsilon}(\boldsymbol{p},\phi):\textcolor{black}{\mathbb{C}^{2m}}\times{S}^1\to\mathbb{R}^{2n}\quad .
\end{equation}
\item There exists a polynomial function \textcolor{black}{$\boldsymbol{R}_{\epsilon}(\boldsymbol{p},\phi):\mathbb{C}^{2m}\times{S}^1\to \mathbb{C}^{2m}$} satisfying the invariance equation
\begin{align}
\label{eq:invariance}
& \boldsymbol{B}\left({D}_{\boldsymbol{p}}\boldsymbol{W}_{\epsilon}(\boldsymbol{p},\phi) \boldsymbol{R}_{\epsilon}(\boldsymbol{p},\phi)+{D}_{\phi}\boldsymbol{W}_{\epsilon}(\boldsymbol{p},\phi) \Omega\right)\nonumber\\
&=\boldsymbol{A}\boldsymbol{W}_{\epsilon}(\boldsymbol{p},\phi)+\boldsymbol{F}( \boldsymbol{W}_{\epsilon}(\boldsymbol{p},\phi))+\epsilon\boldsymbol{F}^{\mathrm{ext}}({\phi}),
\end{align}
such that the reduced dynamics on the SSM can be expressed as
\begin{equation}
\label{eq:ssm-red-full}
\dot{\boldsymbol{p}} = \boldsymbol{R}_{\epsilon}(\boldsymbol{p},\phi),\quad \dot{\phi}=\Omega.
\end{equation}
\end{enumerate}
\end{theorem}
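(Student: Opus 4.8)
The plan is to reduce the non-autonomous problem to an autonomous one and then combine the autonomous SSM theory with the persistence of normally hyperbolic invariant manifolds. First I would adjoin the phase $\phi=\Omega t$ as a variable on $S^1$, rewriting~\eqref{eq:full-first} as the autonomous system $\boldsymbol{B}\dot{\boldsymbol{z}}=\boldsymbol{A}\boldsymbol{z}+\boldsymbol{F}(\boldsymbol{z})+\epsilon\boldsymbol{F}^{\mathrm{ext}}(\phi)$, $\dot{\phi}=\Omega$, on the extended phase space $\mathbb{R}^{2n}\times S^1$. In the unforced limit $\epsilon=0$ the origin is a fixed point whose linearization carries $\spect(\mathcal{E})$ on the master directions, and the autonomous SSM result of~\cite{haller2016nonlinear} (resting on the parameterization method of Cabr\'e--Fontich--de la Llave) produces a unique smoothest invariant manifold $\mathcal{W}(\mathcal{E})$ tangent to $\mathcal{E}$ at the origin. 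The product $\mathcal{W}(\mathcal{E})\times S^1$ is then invariant in the extended system and serves as the unperturbed object to be continued in $\epsilon$.

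The second step is to verify that $\mathcal{W}(\mathcal{E})\times S^1$ is $r$-normally hyperbolic with $r=\Sigma(\mathcal{E})$. Here I would split the linearized spectrum into a tangential part (the spectrum of $\mathcal{E}$ together with the neutral direction $\dot{\phi}=\Omega$) and a normal part (the remaining $\lambda_k\in\spect(\boldsymbol{\Lambda})\setminus\spect(\mathcal{E})$). Since every eigenvalue has strictly negative real part by~\eqref{eq:eig_sort}, both bundles contract, and the relevant requirement is that the normal contraction dominate the tangential contraction up to order $r$; this is precisely what the ratio defining $\Sigma(\mathcal{E})$ quantifies, while the non-resonance hypothesis guarantees that no tangential combination $\boldsymbol{a}\cdot\mathrm{Re}(\boldsymbol{\lambda}^\mathcal{E})+\boldsymbol{b}\cdot\mathrm{Re}(\bar{\boldsymbol{\lambda}}^\mathcal{E})$ of order at most $\Sigma(\mathcal{E})$ resonates with a normal rate $\mathrm{Re}(\lambda_k)$. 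With $r$-normal hyperbolicity secured, the persistence theorem for normally hyperbolic invariant manifolds yields, for $\epsilon>0$ small, a nearby invariant manifold $\mathcal{W}(\mathcal{E},\Omega t)$ of class $C^{\Sigma(\mathcal{E})+1}$ depending smoothly on $\epsilon$, which is conclusion~(i); the uniqueness of the persisting manifold within that smoothness class gives~(ii).

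For the constructive statements~(iii) and~(iv) I would realize the manifold through the parameterization method rather than abstract persistence. I would seek an embedding $\boldsymbol{W}_{\epsilon}(\boldsymbol{p},\phi)$ and a reduced vector field $\boldsymbol{R}_{\epsilon}(\boldsymbol{p},\phi)$ so that trajectories of~\eqref{eq:ssm-red-full} map under $\boldsymbol{W}_{\epsilon}$ to trajectories of the extended system; differentiating $\boldsymbol{z}=\boldsymbol{W}_{\epsilon}(\boldsymbol{p},\phi)$ along the flow and substituting into~\eqref{eq:full-first} produces exactly the invariance equation~\eqref{eq:invariance}. I would then solve this equation with a power-series ansatz in $(\boldsymbol{p},\phi)$, collecting terms order by order so that at each order the unknown coefficients satisfy a linear (cohomological) equation whose solvability is governed by the spectrum. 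The crucial bookkeeping is to separate two kinds of resonance: the \emph{outer} non-resonance condition forbids coupling between $\mathcal{E}$ and its complement and so determines each normal coefficient of $\boldsymbol{W}_{\epsilon}$ uniquely, whereas the admitted \emph{inner} (near) resonances~\eqref{eq:res-inner} among the master eigenvalues make some homological operators singular and therefore force the corresponding resonant monomials to be retained in $\boldsymbol{R}_{\epsilon}$ rather than removed. This is exactly why $\boldsymbol{R}_{\epsilon}$ is a genuinely nonlinear polynomial and not merely the linear diagonal part.

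I expect the main obstacle to be this separation of inner and outer resonances together with the sharp smoothness accounting. Verifying $r$-normal hyperbolicity with the correct exponent $r=\Sigma(\mathcal{E})$, and showing that the retained resonant terms in $\boldsymbol{R}_{\epsilon}$ do not destroy either the solvability of the remaining cohomological equations or the smooth dependence on $\epsilon$, is where the non-resonance hypothesis must be invoked most carefully; once that is established, the order-by-order construction and the persistence argument combine to deliver all four conclusions.
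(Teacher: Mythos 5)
The paper does not actually supply an argument for this theorem: its ``proof'' is a one-sentence identification of the statement as a restatement of Theorem 4 of Haller \& Ponsioen~\cite{haller2016nonlinear}, which in turn rests on the parameterization-method results of Cabr\'e et al.~\cite{cabre2003parameterization-i,cabre2003parameterization-ii,cabre2005parameterization-iii} and Haro \& de la Llave~\cite{haro2006parameterization,haro2006parameterization-num}. Your attempt to reconstruct a proof is therefore more ambitious than what the paper does, and the portion of your plan devoted to statements (iii)--(iv) --- deriving the invariance equation~\eqref{eq:invariance}, solving it order by order, and retaining the inner-resonant monomials~\eqref{eq:res-inner} in $\boldsymbol{R}_\epsilon$ --- is in the spirit of those underlying references. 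The existence--uniqueness core of your argument, however, contains a genuine gap.

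The gap is in your Step 2. You claim $\mathcal{W}(\mathcal{E})\times S^1$ is $r$-normally hyperbolic with $r=\Sigma(\mathcal{E})$, asserting that this is ``precisely what the ratio defining $\Sigma(\mathcal{E})$ quantifies.'' It is not. Normal hyperbolicity of order $r$ requires the \emph{weakest normal} decay rate to dominate $r$ times the \emph{strongest tangential} decay rate, i.e., it is governed by $\mathrm{Int}\left(\max_{\lambda\in\spect(\boldsymbol{\Lambda})\setminus\spect(\mathcal{E})}\mathrm{Re}\,\lambda\,/\,\min_{\lambda\in\spect(\mathcal{E})}\mathrm{Re}\,\lambda\right)$, whereas $\Sigma(\mathcal{E})$ compares the \emph{strongest} decay rate in the whole spectrum with the \emph{weakest} decay rate inside $\mathcal{E}$; the two integers differ in general, with $\Sigma(\mathcal{E})$ typically far larger (e.g., master rate $-0.1$, remaining rates $-0.2$ and $-10$ give normal-hyperbolicity order $2$ but $\Sigma(\mathcal{E})=100$). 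Fenichel-type persistence therefore cannot deliver the $C^{\Sigma(\mathcal{E})+1}$ smoothness class needed in statement (ii). Worse, the paper explicitly allows $\spect(\mathcal{E})$ to be an \emph{arbitrary} subset of the eigenvalues in the ordering~\eqref{eq:eig_sort}; if the complement contains modes decaying more slowly than some master modes, $\mathcal{W}(\mathcal{E})\times S^1$ is not normally hyperbolic at all, yet the theorem still holds --- the non-resonance hypothesis is an avoidance-of-equalities condition ensuring solvability of the cohomological equations, not a domination inequality, and cannot substitute for normal hyperbolicity. Finally, even where persistence theory applies, it gives no uniqueness of the kind asserted in (ii): the manifold here is attracting (inflowing invariant), and all of the infinitely many Shaw--Pierre-type invariant manifolds tangent to $\mathcal{E}$ persist alongside the SSM inside any neighborhood, so ``local uniqueness of the persisting manifold'' cannot single one out. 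Uniqueness among all $C^{\Sigma(\mathcal{E})+1}$ invariant manifolds satisfying (i) is exactly the sharp smoothness-class statement of the parameterization method, which must be applied directly to the time-periodic system (as in~\cite{haro2006parameterization}); your order-by-order analysis for (iii)--(iv) belongs to that framework, but it cannot be grafted onto a Fenichel existence argument.
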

\begin{proof}
This theorem is simply a restatement of Theorem 4 by Haller and Ponsioen~\cite{haller2016nonlinear}, which is based on more abstract results by Cabr\'e et al.~\cite{cabre2003parameterization-i,cabre2003parameterization-ii,cabre2005parameterization-iii} and Haro and de la Llave~\cite{haro2006parameterization,haro2006parameterization-num}.
\end{proof}

\begin{remark}
To check the \emph{non-resonance} condition in the above theorem, we need to know all eigenvalues, which are not available in general for high-dimensional systems. Indeed, the computation of \emph{all} natural frequencies of a high-dimensional system is computationally expensive and challenging. In practice, we only calculate a subset of eigenvalues in SSM analysis. For instance, we may calculate the first $n_\mathrm{s}$ modes with lowest natural frequencies and then find the \emph{inner} resonance among a subset of these $n_\mathrm{s}$ modes to determine the \emph{master} subspace. Then the \emph{non-resonance} condition is checked for the $n_\mathrm{s}$ modes.
\end{remark}
\begin{remark}
The parameterization coordinates $\boldsymbol{p}$ are $m$ pairs of complex conjugate coordinates, namely,
\begin{equation}
\boldsymbol{p}=(q_1,\bar{q}_1,\cdots,q_m,\bar{q}_m),
\end{equation}
where $q_i$ and $\bar{q}_i$ denote the parameterization coordinates corresponding to $\boldsymbol{v}_i^{\mathcal{E}}$ and $\bar{\boldsymbol{v}}_i^{\mathcal{E}}$, respectively. In this paper, we refer to such coordinates as \emph{normal} coordinates as well because they characterize the reduced dynamics on SSM.
\end{remark}

\section{Computation of SSM}
\label{sec:comp-ssm}
In this section, we briefly review the computation procedure developed by Jain \& Haller~\cite{SHOBHIT}, which enables computation of SSMs in physical coordinates using only the eigenvectors associated to the master modal subspace $\mathcal{E}$.~\textcolor{black}{The procedure in~\cite{SHOBHIT} is based on the parameterization method for invariant manifolds (see Haro et al.~\cite{haro2016parameterization} for an overview).}

We seek the unknown parametrizations $\boldsymbol{W}_{\epsilon}(\boldsymbol{p},\phi)$ and $\boldsymbol{R}_{\epsilon}(\boldsymbol{p},\phi)$ as an asymptotic series in $\epsilon$ given their smooth dependence on $\epsilon$. It follows that
\begin{gather}
    \boldsymbol{W}_{\epsilon}(\boldsymbol{p},\phi)=\boldsymbol{W}(\boldsymbol{p})+\epsilon \boldsymbol{X}(\boldsymbol{p},\phi)+\mathcal{O}(\epsilon^2),\label{eq:ssm-exp-eps}\\
    \boldsymbol{R}_{\epsilon}(\boldsymbol{p},\phi)=\boldsymbol{R}(\boldsymbol{p})+\epsilon \boldsymbol{S}(\boldsymbol{p},\phi)+\mathcal{O}(\epsilon^2)\label{eq:red-exp-eps}.
\end{gather}
Substituting the above expansions into the invariance equation~\eqref{eq:invariance} and collecting terms according to the order of $\epsilon$ yields
\begin{equation}
\label{eq:SSM-auto-eq}
    \mathcal{O}(\epsilon^0):\,\, \boldsymbol{B}{D}_{\boldsymbol{p}}\boldsymbol{W}(\boldsymbol{p})\boldsymbol{R}(\boldsymbol{p})=\boldsymbol{A}\boldsymbol{W}(\boldsymbol{p})+\boldsymbol{F}(\boldsymbol{W}(\boldsymbol{p})),
\end{equation}
which \textcolor{black}{turns out} the same as the invariance equation for the \textcolor{black}{autonomous SSM in the $\epsilon=0$ (unforced) limit of system~\eqref{eq:eom-second-full}}. Furthermore, we obtain
\begin{align}
\label{eq:SSM-nonauto-eq}
    \mathcal{O}(\epsilon):\,\, & \boldsymbol{B}{D}_{\boldsymbol{p}}\boldsymbol{W}(\boldsymbol{p})\boldsymbol{S}(\boldsymbol{p},\phi)+\boldsymbol{B}D_{\boldsymbol{p}}\boldsymbol{X}(\boldsymbol{p},\phi)\boldsymbol{R}(\boldsymbol{p})\nonumber\\
    & +\boldsymbol{B}D_{\phi}\boldsymbol{X}(\boldsymbol{p},\phi)\Omega =\boldsymbol{A}\boldsymbol{X}(\boldsymbol{p},\phi)\nonumber\\
    & +D\boldsymbol{F}(\boldsymbol{W}(\boldsymbol{p}))\boldsymbol{X}(\boldsymbol{p},\phi)+\boldsymbol{F}^{\mathrm{ext}}(\phi).
\end{align}

\subsection{Autonomous part}
We first solve~\eqref{eq:SSM-auto-eq} to obtain a Taylor expansion for the autonomous SSM $\boldsymbol{W}(\boldsymbol{p})$ and its reduced dynamics $\boldsymbol{R}(\boldsymbol{p})$ on it. The basic idea of solving~\eqref{eq:SSM-auto-eq} is summarized here but we refer to~\cite{SHOBHIT} for more details. Specifically, a Taylor series is used to expand $\boldsymbol{W}(\boldsymbol{p})$ and $\boldsymbol{R}(\boldsymbol{p})$ in the normal coordinates $\boldsymbol{p}$
\begin{equation}
\label{eq:auto-expansion}
    \boldsymbol{W}(\boldsymbol{p})=\sum_{\boldsymbol{k}} \boldsymbol{w}_{\boldsymbol{k}}\boldsymbol{p}^{\boldsymbol{k}},\,\,
    \boldsymbol{R}(\boldsymbol{p})=\sum_{\boldsymbol{k}} \boldsymbol{r}_{\boldsymbol{k}}\boldsymbol{p}^{\boldsymbol{k}},\,\, |\boldsymbol{k}|\geq1,
\end{equation}
where $\boldsymbol{p}^{\boldsymbol{k}}=p_1^{k_1}\cdot${\dots}$\cdot p_{2m}^{k_{2m}}$ and $|\boldsymbol{k}|=k_1+\cdots+k_{2m}$. We have omitted the leading order $(|\boldsymbol{k}|=0)$ terms in the expansions because $\boldsymbol{W}(\boldsymbol{0})=\boldsymbol{0}$ and $\boldsymbol{R}(\boldsymbol{0})=\boldsymbol{0}$. Substituting~\eqref{eq:auto-expansion} into~\eqref{eq:SSM-auto-eq} and balancing the terms of $\boldsymbol{p}^{\boldsymbol{k}}$ for $\boldsymbol{k}$ satisfying $|\boldsymbol{k}|=j$ yields a set of linear equations of the form
\begin{equation}
\label{eq:ssm-auto-it}
\mathcal{A}_{\boldsymbol{k}}\boldsymbol{w}_{\boldsymbol{k}}=\mathcal{B}_{\boldsymbol{k}}\boldsymbol{r}_{\boldsymbol{k}}-\mathcal{C}_{\boldsymbol{k}},\quad |\boldsymbol{k}|=j,
\end{equation}
where $\mathcal{A}_{\boldsymbol{k}}$, $\mathcal{B}_{\boldsymbol{k}}$ and $\mathcal{C}_{\boldsymbol{k}}$ depend on the expansion coefficients at lower order if $j\geq2$. When $j=1$, the expansion coefficients are related to the master subspace $\mathcal{E}$ and can be solved for directly. Subsequently, we can solve the linear equations~\eqref{eq:ssm-auto-it} recursively to obtain the expansion coefficients at higher orders.

As a demonstration of the above procedure, we consider the case $j=1$. Let ${\boldsymbol{e}}_i\in\mathbb{R}^{2m}$ be the unit vector aligned along the $i$-th coordinate axis of. It follows that $|{\boldsymbol{e}}_i|=1$ for $1\leq i\leq 2m$ and we have
\begin{equation}
\label{eq:ssm-auto-first-order}
\boldsymbol{B}\sum_{{\boldsymbol{e}}_i}\boldsymbol{w}_{{\boldsymbol{e}}_i}\sum_{{\boldsymbol{e}}_j}(\boldsymbol{r}_{{\boldsymbol{e}}_j})_{i}\boldsymbol{p}^{{\boldsymbol{e}}_j}=\boldsymbol{A}\sum_{{\boldsymbol{e}}_j}\boldsymbol{w}_{{\boldsymbol{e}}_j}\boldsymbol{p}^{{\boldsymbol{e}}_j}.
\end{equation}
With the notation
\begin{equation}
\boldsymbol{W}_{\mathbf{I}}=(\boldsymbol{w}_{{\boldsymbol{e}}_1},\cdots,\boldsymbol{w}_{{\boldsymbol{e}}_{2m}}),\quad 
\boldsymbol{R}_{\mathbf{I}}=(\boldsymbol{r}_{{\boldsymbol{e}}_1},\cdots,\boldsymbol{r}_{{\boldsymbol{e}}_{2m}}),
\end{equation}
balancing the two sides of~\eqref{eq:ssm-auto-first-order} yields
\begin{equation}
\boldsymbol{B}\boldsymbol{W}_{\mathbf{I}}\boldsymbol{R}_{\mathbf{I}}=\boldsymbol{A}\boldsymbol{W}_{\mathbf{I}},
\end{equation}
from which we obtain
\begin{gather}
\label{eq:auto-ssm-first-order}
\boldsymbol{W}_{\mathbf{I}} = (\boldsymbol{v}^\mathcal{E}_1,\bar{\boldsymbol{v}}^\mathcal{E}_1,\cdots,\boldsymbol{v}^\mathcal{E}_m,\bar{\boldsymbol{v}}^\mathcal{E}_m),\\
\boldsymbol{R}_{\mathbf{I}}=\diag(\lambda^\mathcal{E}_1,\bar{\lambda}^\mathcal{E}_1,\cdots,\lambda^\mathcal{E}_m,\bar{\lambda}^\mathcal{E}_m).
\end{gather}
Hence, the eigenvectors and eigenvalues associated to the master spectral subspace $\mathcal{E}$ solve the autonomous invariance equations~\eqref{eq:SSM-auto-eq} at the leading order, $j=1$. Using this solution at the leading order, the linear equations~\eqref{eq:ssm-auto-it} can be recursively solved to approximate the autonomous SSM \textcolor{black}{up to} arbitrarily high orders ($j\ge2$) of accuracy. We refer to~\cite{SHOBHIT} for details on the higher-order case.

\begin{sloppypar}
Now, let the autonomous part of the vector field of the reduced dynamics be arranged in complex conjugate blocks as follows
\begin{equation}
\label{eq:red-auto-block}
    \boldsymbol{R}(\boldsymbol{p})=\begin{pmatrix}\boldsymbol{R}_{1}(\boldsymbol{p})\\\vdots\\\boldsymbol{R}_{m}(\boldsymbol{p})\end{pmatrix},
\end{equation}
where $\boldsymbol{R}_{i}(\boldsymbol{p})\in\mathbb{C}^2$ contains the complex conjugate components of the autonomous part of the vector field associated to the $i$-th pair of master mode $(\boldsymbol{v}_i^{\mathcal{E}},\bar{\boldsymbol{v}}_i^{\mathcal{E}})$. Under the (near) inner resonances given by~\eqref{eq:res-inner}, we define
a set containing the corresponding monomial multi-indices as
\begin{equation}
\label{eq:def-Ri}
\mathcal{R}_i=\{(\boldsymbol{l},\boldsymbol{j}): \lambda_i^\mathcal{E}\approx\boldsymbol{l}\cdot\boldsymbol{\lambda}^\mathcal{E}+\boldsymbol{j}\cdot\bar{\boldsymbol{\lambda}}^\mathcal{E}\}.
\end{equation}
Then it follows from the result of~\cite{SHOBHIT} that the normal-form-style parameterization of the autonomous reduced dynamics is given by
\begin{equation}
\label{thm:ssm-auto}
    \boldsymbol{R}_{i}(\boldsymbol{p})=\begin{pmatrix}\lambda_i^{\mathcal{E}}q_i\\\bar{\lambda}_i^{\mathcal{E}}\bar{q}_i\end{pmatrix}+\sum_{(\boldsymbol{l},\boldsymbol{j})\in\mathcal{R}_i}\begin{pmatrix}\gamma(\boldsymbol{l},\boldsymbol{j})\boldsymbol{q}^{\boldsymbol{l}}\bar{\boldsymbol{q}}^{\boldsymbol{j}}\\\bar{\gamma}(\boldsymbol{l},\boldsymbol{j})\boldsymbol{q}^{\boldsymbol{j}}\bar{\boldsymbol{q}}^{\boldsymbol{l}}\end{pmatrix},
\end{equation}
where the normal form coefficients $\gamma(\boldsymbol{l},\boldsymbol{j})$  along with the expansion coefficients of $\boldsymbol{W}(\boldsymbol{p})$ are obtained using the computation method in~\cite{SHOBHIT}.
\end{sloppypar}

\begin{remark}
The computational cost for formulating and solving \eqref{eq:ssm-auto-it} is significant for large $j$. In practice, the expansion is truncated at some order $j_{\max}$. It follows that $j\leq j_{\max}\leq r$ in \eqref{eq:ssm-auto-it} and $j_{\max}$ is referred to as the \emph{expansion order} of SSM. In this paper, we determine the necessary expansion order based on the convergence of the FRC under increasing order, given that the computed approximate SSM will converge to the unique $C^{\Sigma(\mathcal{E})+1}$-smooth SSM as the order of approximation, $j$, increases.
\end{remark}

\subsection{Non-autonomous part}
With $\boldsymbol{W}(\boldsymbol{p})$ and $\boldsymbol{R}(\boldsymbol{p})$ at hand, we solve~\eqref{eq:SSM-nonauto-eq} to obtain $\boldsymbol{X}(\boldsymbol{p},\phi)$ and $\boldsymbol{S}(\boldsymbol{p},\phi)$. Likewise, Taylor expansion in $\boldsymbol{p}$ is used to approximate $\boldsymbol{X}$ and $\boldsymbol{S}$. The expansion coefficients here are not constant but functions of $\phi$ and hence periodic. \textcolor{black}{In this work, we restrict ourselves to a leading-order approximation in $\boldsymbol{p}$ for $\boldsymbol{X}$ and $\boldsymbol{S}$~\cite{SHOBHIT,breunung2018explicit}, i.e.,}
\begin{equation}
\label{eq:leading-nonauto}
\textcolor{black}{
\begin{gathered}
    \boldsymbol{X}(\boldsymbol{p},\phi)=\boldsymbol{X}_{\boldsymbol{0}}(\phi) + \mathcal{O}(|\boldsymbol{p}|),\\
    \boldsymbol{S}(\boldsymbol{p},\phi)=\boldsymbol{S}_{\boldsymbol{0}}(\phi) + \mathcal{O}(|\boldsymbol{p}|).
\end{gathered}}    
\end{equation}
Then, the reduced dynamics~\eqref{eq:ssm-red-full} takes the form
\begin{equation}
\label{eq:red-nonauto-lead}
    \dot{\boldsymbol{p}}=\boldsymbol{R}(\boldsymbol{p})+\epsilon\boldsymbol{S}_{\boldsymbol{0}}(\phi)+\mathcal{O(\epsilon|\boldsymbol{p}|)}.
\end{equation}
Similar to~\eqref{eq:red-auto-block}, we arrange the non-autonomous part of the vector field of reduced dynamics in complex conjugate blocks as follows
\begin{equation}
\label{eq:red-nonaut-block}
    \boldsymbol{S}_{\boldsymbol{0}}(\phi)=\begin{pmatrix}\boldsymbol{S}_{\boldsymbol{0},1}(\phi)\\\vdots\\\boldsymbol{S}_{\boldsymbol{0},m}(\phi)\end{pmatrix},
\end{equation}
where $\boldsymbol{S}_{\boldsymbol{0},i}(\phi)\in\mathbb{C}^2$ contains the complex conjugate components of the leading-order non-autonomous part of the vector field associated with the $i$-th pair of master modes, $(\boldsymbol{v}_i^{\mathcal{E}},\bar{\boldsymbol{v}}_i^{\mathcal{E}})$. Let
\begin{equation}
\label{eq:forcing-conj}
\boldsymbol{F}^{\mathrm{ext}}(\phi)={\boldsymbol{F}}^\mathrm{a}e^{\mathrm{i}\phi}+{\boldsymbol{F}^\mathrm{a}}e^{-\mathrm{i}\phi},
\end{equation}
where the forcing amplitude vector ${\boldsymbol{F}}^{\mathrm{a}}\in\mathbb{R}^{2n}$ with superscript `a' stands for `amplitude'. It follows then from the derivation in Appendix~\ref{sec:proof-thm-ssm-nonauto} that
\begin{equation}
\label{thm:ssm-nonauto}
\boldsymbol{S}_{\boldsymbol{0},i}(\phi)=\begin{pmatrix}{{S}}_{\boldsymbol{0},i}e^{\mathrm{i}\phi}\\\bar{{S}}_{\boldsymbol{0},i}e^{-\mathrm{i}\phi}\end{pmatrix},\quad i=1,\cdots,m,
\end{equation}
with
\begin{equation}
\label{eq:ssm-nonauto}
{{S}}_{\boldsymbol{0},i}=\left\lbrace \begin{array}{cl}
      (\boldsymbol{u}_i^{\mathcal{E}})^\ast\boldsymbol{F}^\mathrm{a}   & \text{if} \hspace{2mm} \lambda_i^{\mathcal{E}}\approx\mathrm{i}\Omega \\
      {0}   & \text{otherwise}
    \end{array} \right..
\end{equation}
In addition, letting $\boldsymbol{S}_{\boldsymbol{0}}(\phi)=\boldsymbol{s}_{\boldsymbol{0}}^+e^{\mathrm{i}\phi} + {\boldsymbol{s}}_{\boldsymbol{0}}^-e^{-\mathrm{i}\phi}$, we obtain
\begin{equation}
\label{eq:nonautossm-exp}
\boldsymbol{X}_{\boldsymbol{0}}(\phi)=\boldsymbol{x}_{\boldsymbol{0}}e^{\mathrm{i}\phi}+\bar{\boldsymbol{x}}_{\boldsymbol{0}}e^{-\mathrm{i}\phi},
\end{equation}
where $\boldsymbol{x}_{\boldsymbol{0}}$ is the solution to the system of linear equations
\begin{equation}
\label{eq:expphi-}
(\boldsymbol{A}-\mathrm{i}\Omega\boldsymbol{B})\boldsymbol{x}_{\boldsymbol{0}}=\boldsymbol{B}\boldsymbol{W}_{\mathbf{I}}\boldsymbol{s}_{\boldsymbol{0}}^+-\boldsymbol{F}^{\mathrm{a}}.
\end{equation}

\section{Reduced dynamics on SSM}
\label{sec:redyn-ssm}
\begin{sloppypar}
\textcolor{black}{In this section}, we establish the form of the leading-order reduced dynamics on a multi-dimensional, time-periodic SSM with internal resonance. As the SSM is an attracting slow manifold, its reduced dynamics will serve as a reduced-order model for the evolution of all nearby initial conditions. \textcolor{black}{In the special case that $\mathrm{Re}(\lambda_{2n})=\mathrm{Re}(\lambda_{2n-1})=\cdots=\mathrm{Re}(\lambda_1)$, e.g., when the system has a purely mass-proportional damping, we do not have a slow SSM. However, as we will see in this section, we select master subspace based on external and internal resonance, and the slowness of SSM is not an essential ingredient. The attractiveness of the SSM is automatically ensured because the remaining modes will decay quickly due to damping.}
\end{sloppypar}

\subsection{Main theorems}
\label{sec:theos}

When the excitation frequency $\Omega$ is not close to any of the natural frequencies, i.e., the external excitation is not in (near-) resonance with the system's eigenvalues, then it follows from~\eqref{eq:ssm-nonauto} that the non-autonomous part of the reduced dynamics vanishes. Indeed, the reduced dynamics is autonomous in this setting as the normal form style of parametrization of the non-autonomous SSM removes the non-resonant terms from its reduced dynamics. Hence, the trivial fixed point of the reduced dynamics is a stable focus. Substituting the steady-state $\boldsymbol{p}(t)=\mathbf{0}$ into~\eqref{eq:ssm-exp-eps} and utilizing~\eqref{eq:leading-nonauto} and~\eqref{eq:nonautossm-exp}, we obtain the periodic response of the full system at steady state as follows
\begin{equation}
\label{eq:zt-linear}
 \boldsymbol{z}(t)=-2\epsilon\mathrm{Re}\left((\boldsymbol{A}-\mathrm{i}\Omega\boldsymbol{B})^{-1}\boldsymbol{F}^{\mathrm{a}}e^{\mathrm{i}\Omega t}\right). 
\end{equation}
\textcolor{black}{Substituting~\eqref{eq:zABF} into the above equation, letting
\begin{equation}
   \boldsymbol{f}^\mathrm{ext}(\Omega t)={\boldsymbol{f}}^\mathrm{a}e^{\mathrm{i}\Omega t}+{\boldsymbol{f}}^\mathrm{a}e^{-\mathrm{i}\Omega t} 
\end{equation}
and utilizing~\eqref{eq:forcing-conj}, we can rewrite~\eqref{eq:zt-linear} in a more familiar representation as
\begin{equation}
    \boldsymbol{x}(t)=2\epsilon\mathrm{Re}\left((-\Omega^2\boldsymbol{M}+\mathrm{i}\Omega\boldsymbol{C}+\boldsymbol{K})^{-1}\boldsymbol{f}^{\mathrm{a}}e^{\mathrm{i}\Omega t}\right).
\end{equation}
Therefore,} the system behaves as a linear system at leading order.

We are mainly concerned with the response of the system~\eqref{eq:full-first} near an external resonance with the forcing frequency. We assume that the excitation frequency $\Omega$ is resonant with the master eigenvalues in the following way:
\begin{equation}
\label{eq:res-forcing}
    \boldsymbol{\lambda}^{\mathcal{E}}-\mathrm{i}\boldsymbol{r}\Omega\approx0,\,\, \bar{\boldsymbol{\lambda}}^{\mathcal{E}}+\mathrm{i}\boldsymbol{r}\Omega\approx0,\,\, \boldsymbol{r}\in\mathbb{Q}^m.
\end{equation}
\textcolor{black}{As an example of the resonance relation~\eqref{eq:res-forcing}, we consider an internally resonant system such that the master subspace $\mathcal{E}$ has two pairs of modes that exhibit near 1:3 inner resonances, i.e., $\lambda_2^\mathcal{E}\approx3\lambda_1^\mathcal{E}$ and $\bar{\lambda}_2^\mathcal{E}\approx3\bar{\lambda}_1^\mathcal{E}$. Then, if the external forcing frequency $\Omega$ is nearly resonant with the first pair of modes, i.e., $\lambda_1^\mathcal{E}\approx\mathrm{i}\Omega,~\lambda_2^\mathcal{E}\approx \mathrm{i}3\Omega$, we have $\boldsymbol{r}=(1,3)$. However, if the external forcing resonates with the second pair of modes, i.e., $\lambda_1^\mathcal{E} \approx \frac{1}{3}\mathrm{i}\Omega, ~\lambda_2^\mathcal{E}\approx \mathrm{i}\Omega$, then we have $\boldsymbol{r}=(1/3,1)$.}

\begin{theorem}[Reduced dynamics in polar coordinates]
\label{theo:polar}
Under the \emph{inner} resonance condition~\eqref{eq:res-inner}, the \emph{external} resonance condition~\eqref{eq:res-forcing}, and with polar coordinates $(\rho_i,\theta_i)$ defined as
\begin{equation}
\label{eq:polar-form}
    q_i=\rho_ie^{\mathrm{i}(\theta_i+r_i\Omega t)},\,\,\bar{q}_i=\rho_ie^{-\mathrm{i}(\theta_i+r_i\Omega t)},
\end{equation}
for $i=1,\cdots,m$, the following statements hold for $\epsilon>0$ small enough:
\begin{enumerate}[label=(\roman*)]
\item Under the coordinate transformation~\eqref{eq:polar-form}, the reduced dynamics~\eqref{eq:ssm-red-full} on the $2m$-dimensional SSM can be simplified to yield a slow-fast dynamical system. In the rotating frame, the {slow-phase} reduced dynamics in \emph{polar} coordinates $(\boldsymbol{\rho},\boldsymbol{\theta})\in\mathbb{R}^m\times\mathbb{T}^m$ is given by
\begin{equation}
\begin{pmatrix}\dot{\rho}_i\\\dot{\theta}_i\end{pmatrix}=\boldsymbol{r}^{\mathrm{p}}_i(\boldsymbol{\rho},\boldsymbol{\theta},\Omega,\epsilon)+\mathcal{O}(\epsilon|\boldsymbol{\rho}|)\boldsymbol{g}_i^\mathrm{p}(\phi),\label{eq:ode-reduced-slow-polar}
\end{equation}
for $i=1,\cdots,m$. Here the superscript $\mathrm{p}$ stands for `polar', $\boldsymbol{g}_i^\mathrm{p}$ is a periodic function and
\begin{align}
\label{eq:ode-reduced-polar-blocki}
\boldsymbol{r}^{\mathrm{p}}_i&=\begin{pmatrix}\rho_i\mathrm{Re}(\lambda_i^{\mathcal{E}})\\\mathrm{Im}(\lambda_i^{\mathcal{E}})-r_i\Omega\end{pmatrix}\nonumber\\
& +\sum_{(\boldsymbol{l},\boldsymbol{j})\in\mathcal{R}_i}\boldsymbol{\rho}^{\boldsymbol{l}+\boldsymbol{j}}\boldsymbol{Q}(\rho_i,\varphi_i(\boldsymbol{l},\boldsymbol{j}))\begin{pmatrix}\mathrm{Re}(\gamma(\boldsymbol{l},\boldsymbol{j}))\\\mathrm{Im}(\gamma(\boldsymbol{l},\boldsymbol{j}))\end{pmatrix}\nonumber\\
& +\epsilon\boldsymbol{Q}(\rho_i,-\theta_i)\begin{pmatrix}\mathrm{Re}(f_i)\\\mathrm{Im}(f_i)\end{pmatrix}
\end{align}
with $\mathcal{R}_i$ defined in~\eqref{eq:def-Ri} and with $\boldsymbol{\varphi}_i$ and $\boldsymbol{Q}$ defined as
\begin{gather}
\varphi_i(\boldsymbol{l},\boldsymbol{j})=\langle \boldsymbol{l}-\boldsymbol{j}-\mathbf{e}_i,\boldsymbol{\theta} \rangle,\label{eq:varphi-ang}\\
\boldsymbol{Q}(\rho,\theta)= \begin{pmatrix}\cos \theta & -\sin \theta\\\frac{1}{\rho}\sin \theta&\frac{1}{\rho}\cos \theta\end{pmatrix},\\
f_i=\left\lbrace \begin{array}{cl}
      (\boldsymbol{u}_i^{\mathcal{E}})^\ast\boldsymbol{F}^a   & \text{if}\hspace{2mm}  r_i=1 \\
      {0}   & \text{otherwise}
    \end{array} \right. \label{eq:fi}.
\end{gather}
Here $\mathbf{e}_i\in\mathbb{R}^m$ is the unit vector aligned along the $i$-th axis.
\item Any hyperbolic fixed point of the leading-order truncation of~\eqref{eq:ode-reduced-slow-polar}, viz,
\begin{equation}
\begin{pmatrix}\dot{\rho}_i\\\dot{\theta}_i\end{pmatrix}=\boldsymbol{r}^{\mathrm{p}}_i(\boldsymbol{\rho},\boldsymbol{\theta},\Omega,\epsilon),\quad i=1,\cdots, m,\label{eq:ode-reduced-slow-polar-leading}
\end{equation}
\textcolor{black}{persists as} a periodic solution $\boldsymbol{p}(t)$ of the reduced dynamics~\eqref{eq:ssm-red-full} on the SSM $\mathcal{W}(\mathcal{E},\Omega t)$. For a given excitation amplitude $\epsilon_0$, the leading-order approximation to the FRC is given by the zero level set of the components of the function $\boldsymbol{\mathcal{F}}^{\mathrm{p}}_{\epsilon_0}:\mathbb{R}^m\times\mathbb{T}^m\times\mathbb{R}\to\mathbb{R}^{2m}$
\begin{equation}
\boldsymbol{\mathcal{F}}^{\mathrm{p}}_{\epsilon_0}(\boldsymbol{\rho},\boldsymbol{\theta},\Omega):=\begin{pmatrix}\boldsymbol{r}^{\mathrm{p}}_1(\boldsymbol{\rho},\boldsymbol{\theta},\Omega,\epsilon_0)\\\vdots\\\boldsymbol{r}^{\mathrm{p}}_m(\boldsymbol{\rho},\boldsymbol{\theta},\Omega,\epsilon_0)\end{pmatrix}.
\end{equation}
\item The stability type of a hyperbolic fixed point of~\eqref{eq:ode-reduced-slow-polar-leading} coincides with the stability type of the corresponding periodic solution on the SSM $\mathcal{W}(\mathcal{E},\Omega t)$.
\end{enumerate}
\end{theorem}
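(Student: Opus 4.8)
The plan is to establish the three parts in sequence: part (i) by direct substitution of the polar ansatz into the reduced dynamics, and parts (ii)--(iii) by persistence and spectral-continuity arguments applied to the Poincar\'e map of the reduced flow. For part (i), I would insert \eqref{eq:polar-form} into the leading-order reduced dynamics \eqref{eq:red-nonauto-lead}, using the block normal form \eqref{thm:ssm-auto} of $\boldsymbol{R}$ and the forcing structure \eqref{thm:ssm-nonauto} of $\boldsymbol{S}_{\boldsymbol{0}}$. Differentiating $q_i=\rho_i e^{\mathrm{i}(\theta_i+r_i\Omega t)}$ gives $\dot q_i=(\dot\rho_i+\mathrm{i}\rho_i\dot\theta_i+\mathrm{i}\rho_i r_i\Omega)e^{\mathrm{i}(\theta_i+r_i\Omega t)}$; equating this with the $i$-th component of the vector field and cancelling the common phase factor yields a scalar complex identity for $\dot\rho_i+\mathrm{i}\rho_i\dot\theta_i$. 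A resonant monomial contributes $\boldsymbol{\rho}^{\boldsymbol{l}+\boldsymbol{j}}\exp(\mathrm{i}\sum_k(l_k-j_k)(\theta_k+r_k\Omega t))$, so after dividing by $e^{\mathrm{i}(\theta_i+r_i\Omega t)}$ the residual coefficient of $\Omega t$ is $\sum_k(l_k-j_k)r_k-r_i$. The crucial algebraic step is to show this residual vanishes: the imaginary part of the inner resonance \eqref{eq:res-inner} gives $\mathrm{Im}(\lambda_i^{\mathcal{E}})=\sum_k(l_k-j_k)\mathrm{Im}(\lambda_k^{\mathcal{E}})$, while the external resonance \eqref{eq:res-forcing} gives $\mathrm{Im}(\lambda_k^{\mathcal{E}})=r_k\Omega$, whence $\sum_k(l_k-j_k)r_k=r_i$ and the explicit time dependence drops out, leaving the phase $\varphi_i(\boldsymbol{l},\boldsymbol{j})$ of \eqref{eq:varphi-ang}. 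The identical cancellation applied to the forcing term $\epsilon S_{\boldsymbol{0},i}e^{\mathrm{i}\phi}$ leaves a surviving contribution only when $r_i=1$, reproducing \eqref{eq:fi}. Splitting the scalar identity into real and imaginary parts yields $\dot\rho_i$ and $\rho_i\dot\theta_i$; factoring the complex-to-real rotation produces exactly the matrix $\boldsymbol{Q}$ and the block $\boldsymbol{r}_i^{\mathrm{p}}$ of \eqref{eq:ode-reduced-polar-blocki}, while the $\mathcal{O}(|\boldsymbol{p}|)$ non-autonomous corrections dropped in \eqref{eq:leading-nonauto} are collected into the periodic remainder $\mathcal{O}(\epsilon|\boldsymbol{\rho}|)\boldsymbol{g}_i^{\mathrm{p}}(\phi)$.

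For part (ii), the key observation is that a fixed point $(\boldsymbol{\rho}^\ast,\boldsymbol{\theta}^\ast)$ of the autonomous truncation \eqref{eq:ode-reduced-slow-polar-leading} corresponds, through \eqref{eq:polar-form}, to $q_i(t)=\rho_i^\ast e^{\mathrm{i}(\theta_i^\ast+r_i\Omega t)}$, which is periodic in $t$ with period $\mathcal{T}=2\pi L/\Omega$, where $L$ is the least common multiple of the denominators of the rational entries $r_i$ (a subharmonic response whenever some $r_i$ is not an integer). I would then view the full reduced flow \eqref{eq:ode-reduced-slow-polar} in the rotating frame as a time-periodic perturbation of the autonomous truncation and compare their stroboscopic (Poincar\'e) maps. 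Near a genuine forced response, where $\boldsymbol{\rho}^\ast\neq\boldsymbol{0}$ lies away from the polar coordinate singularity, the perturbation is $\mathcal{O}(\epsilon)$, so the Poincar\'e map of the full system is an $\mathcal{O}(\epsilon)$ perturbation of the autonomous one, whose fixed point is hyperbolic by hypothesis. The implicit function theorem then yields a nearby fixed point of the perturbed map for $\epsilon$ small, i.e.\ a periodic orbit $\boldsymbol{p}(t)$ of \eqref{eq:ssm-red-full}. Since periodic orbits of the reduced dynamics are precisely the periodic responses of the full system, and fixed points are the zeros of the stacked blocks $\boldsymbol{r}_i^{\mathrm{p}}$, the leading-order FRC is the zero level set of $\boldsymbol{\mathcal{F}}^{\mathrm{p}}_{\epsilon_0}$.

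For part (iii), I would compare Floquet multipliers with Jacobian eigenvalues. The linearization $D\boldsymbol{r}^{\mathrm{p}}$ of the autonomous truncation at $(\boldsymbol{\rho}^\ast,\boldsymbol{\theta}^\ast)$ has eigenvalues $\mu_1,\dots,\mu_{2m}$ with $\mathrm{Re}(\mu_j)\neq0$ by hyperbolicity, so the time-$\mathcal{T}$ map of the truncated flow has multipliers $e^{\mathcal{T}\mu_j}$, none on the unit circle. Because the rotating-frame transformation \eqref{eq:polar-form} is a $\mathcal{T}$-periodic diffeomorphism, it preserves Floquet multipliers, and the monodromy matrix of the persisting orbit converges to $e^{\mathcal{T}D\boldsymbol{r}^{\mathrm{p}}(\boldsymbol{\rho}^\ast,\boldsymbol{\theta}^\ast)}$ as $\epsilon\to0$. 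By continuity of the spectrum and the openness of the hyperbolicity condition $|e^{\mathcal{T}\mu_j}|\neq1$, the multipliers of the periodic orbit remain on the same side of the unit circle for $\epsilon$ small; hence its numbers of stable and unstable directions equal the numbers of eigenvalues of $D\boldsymbol{r}^{\mathrm{p}}$ with negative and positive real part, and the stability types coincide.

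I expect the main obstacle to be the rigorous persistence and stability transfer in parts (ii)--(iii), specifically the care required because the perturbation is $\mathcal{O}(\epsilon|\boldsymbol{\rho}|)$ rather than uniformly $\mathcal{O}(\epsilon)$ and because the polar chart degenerates at $\boldsymbol{\rho}=\boldsymbol{0}$; one must confine the analysis to a neighbourhood of the nonzero fixed point and track the correct subharmonic period $\mathcal{T}$ when some $r_i$ is a non-integer rational. By contrast, the computation in part (i), though lengthy, becomes routine once the resonance identity $\sum_k(l_k-j_k)r_k=r_i$ has been established.
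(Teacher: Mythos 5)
Your part (i) is essentially the paper's own argument: substitute \eqref{eq:polar-form} into the leading-order reduced dynamics, cancel the common phase factor, and invoke the identity $\sum_k(l_k-j_k)r_k=r_i$, which is precisely the paper's Lemma~\ref{le:ext-res}. One refinement you should make explicit: since \eqref{eq:res-inner} and \eqref{eq:res-forcing} hold only approximately, taking imaginary parts gives the identity only approximately; it becomes exact because both sides are fixed rational numbers determined by the integer vectors $\boldsymbol{l},\boldsymbol{j}$, so approximate equality forces equality. This is exactly the content of the paper's lemma, and without it the explicit time dependence would not cancel identically.

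The genuine gap is in parts (ii)--(iii). Your persistence and stability-transfer arguments are regular-perturbation arguments: they need the hyperbolicity of the unperturbed fixed point -- equivalently, the distance of the multipliers $e^{\mathcal{T}\mu_j}$ from the unit circle -- to stay bounded below as $\epsilon\to0$ while the perturbation tends to zero. In the near-resonant regime the theorem actually addresses, this fails: a fixed point of \eqref{eq:ode-reduced-slow-polar-leading} at finite amplitude exists only through a balance of the weak damping $\mathrm{Re}(\lambda_i^{\mathcal{E}})$, the small detuning $\mathrm{Im}(\lambda_i^{\mathcal{E}})-r_i\Omega$, the nonlinearity, and the $\mathcal{O}(\epsilon)$ forcing, so the fixed point, the eigenvalues $\mu_j$ of its Jacobian, and hence the gap $|e^{\mathcal{T}\mu_j}|-1$ all shrink with $\epsilon$. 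The unperturbed multipliers themselves converge to the unit circle, so ``continuity of the spectrum'' and a naive implicit function theorem give nothing; one must show that the dropped terms are of \emph{higher} order than the degenerating hyperbolicity. This is what the paper's proof does (it is deferred to Appendix~\ref{sec:proof-theo-cartesian}, the claim being coordinate-independent): it introduces the quantitative scaling assumption $\boldsymbol{\lambda}^{\mathcal{E}}-\mathrm{i}\boldsymbol{r}\Omega=\epsilon^{q}\boldsymbol{t}$ with $q=1-1/k$ ($k$ the lowest order of nonlinearity), proves via a rescaled implicit-function-theorem argument that the fixed point has magnitude $\mathcal{O}(\epsilon^{1-q})$, passes to scaled transverse coordinates $\mathbf{y}=\epsilon^{(1-q)/2}\hat{\mathbf{y}}$ so that the remainder becomes $\mathcal{O}(\epsilon^{(1+q)/2})$, strictly higher order than the $\mathcal{O}(\epsilon^{q})$ hyperbolic linear part, and then runs an averaging argument in the spirit of \cite{guckenheimer2013nonlinear}, comparing period-$T$ Poincar\'e maps through the zero functions $(\mathcal{P}-\mathbb{I})/\nu_1$ with $\nu_1=\epsilon^{q}$, whose limiting Jacobian $\mathbf{A}_0T$ is invertible, so that persistence and the transfer of hyperbolicity follow. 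You flagged precisely this difficulty at the end of your proposal (``the perturbation is $\mathcal{O}(\epsilon|\boldsymbol{\rho}|)$ rather than uniformly $\mathcal{O}(\epsilon)$''), but flagging it is not resolving it: without the scaling assumption and the averaging construction, or some equivalent hypothesis making the hyperbolicity uniform in $\epsilon$, your IFT and spectral-continuity steps collapse exactly in the regime where the theorem has content.
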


\begin{proof}
We present the proof of this theorem in Appendix~\ref{sec:proof-theo-polar}.
\end{proof}

We restrict ourselves to the leading-order approximation (see~\eqref{eq:leading-nonauto} and~\eqref{eq:red-nonauto-lead}) for the following three reasons: (i) the proof of the theorem implies the persistence of hyperbolic periodic orbits under the addition of terms at order $\mathcal{O}(\epsilon|\boldsymbol{p}|)$ or higher; (ii) numerical experiments show that the results with this approximation already have satisfied accuracy; (iii) we obtain a parametric reduced-order model~\eqref{eq:ode-reduced-slow-polar-leading} with the forcing frequency $\Omega$ and the amplitude $\epsilon$ as system parameters, enabling efficient parameter continuation (see section~\ref{sec:cont-fixed-points}). When the higher-order terms at $\mathcal{O}(\epsilon|\boldsymbol{p}|^k)$ with $k\geq1$ for the non-autonomous part are taken into consideration, the slow-phase reduced dynamics is still of the form~\eqref{eq:ode-reduced-slow-polar-leading}. However, the coefficients of these higher-order terms are implicit functions of $\Omega$ and one has to solve systems of linear equations to obtain the coefficients for each $\Omega$~\cite{ponsioen2019analytic}. \textcolor{black}{In ref.~\cite{jiang2005nonlinear}, a Galerkin-based method was used to solve the invariance equations and the resulting reduced dynamics is not parametric in $\Omega$. Thus, one needs to construct reduced-order models for a number of discrete excitation frequencies to approximate a forced response curve~\cite{jiang2005nonlinear}.}

We note that the reduced dynamics~\eqref{eq:ode-reduced-slow-polar} becomes singular at $\rho_i=0$ for any $i\in\{1,\cdots,m\}$ due to the blow-up of $\boldsymbol{Q}(\rho_i,-\theta_i)$ at $\rho_i=0$. 
Such a singularity always arises in the study of a 1:1 resonance between the higher-frequency master mode and external forcing frequency~\cite{nayfeh1989modal}.
For instance, if $\Omega\approx\omega_2$ with $\omega_2\approx3\omega_1$, we have a solution branch with vanishing $\rho_1$~\cite{nayfeh1974nonlinear}. One is tempted to simply ignore the corresponding component in the vector field~\eqref{eq:ode-reduced-slow-polar}, but this prevents us from determining the correct stability of the fixed point based on the simplified system~\cite{nayfeh1988undesirable}. For this reason, we also give the Cartesian coordinate representation of the reduced dynamics on the SSM in the following theorem.

\begin{theorem}[Reduced dynamics on Cartesian coordinates]
\label{theo:cartesian}
Under the \emph{inner} resonance condition~\eqref{eq:res-inner}, the \emph{external} resonance condition~\eqref{eq:res-forcing}, and with Cartesian coordinates $(q_{i,\mathrm{s}}^\mathrm{R},q_{i,\mathrm{s}}^\mathrm{I})$ defined as
\begin{gather}
    q_i=q_{i,\mathrm{s}}e^{\mathrm{i}r_i\Omega t}=(q_{i,\mathrm{s}}^\mathrm{R}+\mathrm{i}q_{i,\mathrm{s}}^\mathrm{I})e^{\mathrm{i}r_i\Omega t},\nonumber\\
    \label{eq:cartesian-form}
    \bar{q}_i=\textcolor{black}{\bar{q}_{i,\mathrm{s}}e^{-\mathrm{i}r_i\Omega t}}=(q_{i,\mathrm{s}}^\mathrm{R}-\mathrm{i}q_{i,\mathrm{s}}^\mathrm{I})e^{-\mathrm{i}r_i\Omega t},
\end{gather}
for $i=1,\cdots,m$, where $q_{i,\mathrm{s}}^\mathrm{R}=\mathrm{Re}(q_{i,\mathrm{s}})$ and $q_{i,\mathrm{s}}^\mathrm{I}=\mathrm{Im}(q_{i,\mathrm{s}})$, the following statements hold for $\epsilon>0$ small enough:
\begin{enumerate}[label=(\roman*)]
\item Under the coordinate transformation~\eqref{eq:cartesian-form}, the reduced dynamics~\eqref{eq:ssm-red-full} on the $2m$-dimensional SSM, can be simplified to yield a slow-fast dynamical system with the coordinate transformation~\eqref{eq:cartesian-form}. In the rotating frame, the {slow-phase} reduced dynamics in \emph{Cartesian} coordinates $(\boldsymbol{q}_{\mathrm{s}}^\mathrm{R},\boldsymbol{q}_{\mathrm{s}}^\mathrm{I})\in\mathbb{R}^m\times\mathbb{R}^m$ is given by
\begin{equation}
\label{eq:ode-reduced-slow-cartesian}
\begin{pmatrix}\dot{q}_{i,\mathrm{s}}^\mathrm{R}\\\dot{q}_{i,\mathrm{s}}^\mathrm{I}\end{pmatrix}=\boldsymbol{r}^{\mathrm{c}}_i(\boldsymbol{q}_{\mathrm{s}},\Omega,\epsilon)+\mathcal{O}(\epsilon|\boldsymbol{q}_\mathrm{s}|)\boldsymbol{g}_i^\mathrm{c}(\phi),
\end{equation}
for $i=1,\cdots,m$. Here the superscript $\mathrm{c}$ stands for `Cartesian', $\boldsymbol{g}_i^\mathrm{c}$ is a periodic function, and
\begin{align}
\boldsymbol{r}^{\mathrm{c}}_i & =\begin{pmatrix}\mathrm{Re}(\lambda_i^{\mathcal{E}}) & r_i\Omega-\mathrm{Im}(\lambda_i^{\mathcal{E}})\\
\mathrm{Im}(\lambda_i^{\mathcal{E}})-r_i\Omega & \mathrm{Re}(\lambda_i^{\mathcal{E}})\end{pmatrix}\begin{pmatrix}q_{i,\mathrm{s}}^{\mathrm{R}}\\q_{i,\mathrm{s}}^{\mathrm{I}}\end{pmatrix}
\nonumber\\
& +\sum_{(\boldsymbol{l},\boldsymbol{j})\in\mathcal{R}_i}\begin{pmatrix}\mathrm{Re}\left(\gamma(\boldsymbol{l},\boldsymbol{j})\boldsymbol{q}_s^{\boldsymbol{l}}\bar{\boldsymbol{q}}_s^{\boldsymbol{j}}\right)\\\mathrm{Im}\left(\gamma(\boldsymbol{l},\boldsymbol{j})\boldsymbol{q}_s^{\boldsymbol{l}}\bar{\boldsymbol{q}}_s^{\boldsymbol{j}}\right)\end{pmatrix}+\epsilon\begin{pmatrix}\mathrm{Re}(f_i)\\\mathrm{Im}(f_i)\end{pmatrix}.
\end{align}
\item Any hyperbolic fixed point of the leading-order truncation of~\eqref{eq:ode-reduced-slow-cartesian}, viz,
\begin{equation}
\label{eq:ode-reduced-slow-cartesian-leading}
\begin{pmatrix}\dot{q}_{i,\mathrm{s}}^\mathrm{R}\\\dot{q}_{i,\mathrm{s}}^\mathrm{I}\end{pmatrix}=\boldsymbol{r}^{\mathrm{c}}_i(\boldsymbol{q}_{\mathrm{s}},\Omega,\epsilon),\quad i=1,\cdots,m,
\end{equation}
corresponds to a periodic solution $\boldsymbol{p}(t)$ of the reduced dynamics~\eqref{eq:ssm-red-full} on SSM $\mathcal{W}(\mathcal{E},\Omega t)$. For a given excitation amplitude $\epsilon_0$, the leading-order approximation to the FRC is given by the zero level set of the components of the function $\boldsymbol{\mathcal{F}}^c_{\epsilon_0}:\mathbb{C}^m\times\mathbb{R}\to\mathbb{R}^{2m}$
\begin{equation}
\boldsymbol{\mathcal{F}}^{\mathrm{c}}_{\epsilon_0}(\boldsymbol{q}_{\mathrm{s}},\Omega):=\begin{pmatrix}\boldsymbol{r}^{\mathrm{c}}_1(\boldsymbol{q}_{\mathrm{s}},\Omega,\epsilon_0)\\\vdots\\\boldsymbol{r}^{\mathrm{c}}_m(\boldsymbol{q}_{\mathrm{s}},\Omega,\epsilon_0)\end{pmatrix}.
\end{equation}
\item The stability type of a hyperbolic fixed point of~\eqref{eq:ode-reduced-slow-cartesian-leading} coincides with the stability type of the corresponding periodic solution on the SSM $\mathcal{W}(\mathcal{E},\Omega t)$.
\end{enumerate}
\end{theorem}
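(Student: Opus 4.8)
The plan is to follow the same three-step structure used for the polar representation in Theorem~\ref{theo:polar}, replacing the modulus--phase split by the real--imaginary split $q_{i,\mathrm{s}}=q_{i,\mathrm{s}}^{\mathrm R}+\mathrm{i}q_{i,\mathrm{s}}^{\mathrm I}$, since the two statements differ only in the choice of coordinates on each conjugate block. For part (i) I would start from the leading-order reduced dynamics~\eqref{eq:red-nonauto-lead} written block-wise using~\eqref{thm:ssm-auto} and~\eqref{thm:ssm-nonauto}, which gives, for the $i$-th block, $\dot q_i=\lambda_i^{\mathcal E}q_i+\sum_{(\boldsymbol l,\boldsymbol j)\in\mathcal R_i}\gamma(\boldsymbol l,\boldsymbol j)\boldsymbol q^{\boldsymbol l}\bar{\boldsymbol q}^{\boldsymbol j}+\epsilon S_{\boldsymbol 0,i}e^{\mathrm i\Omega t}+\mathcal O(\epsilon|\boldsymbol p|)$. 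Substituting the rotating-frame transformation~\eqref{eq:cartesian-form} and using the product rule to isolate $\dot q_{i,\mathrm{s}}$ produces a common factor $e^{\mathrm i r_i\Omega t}$, while the linear term contributes the frequency shift $\lambda_i^{\mathcal E}-\mathrm i r_i\Omega$.

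The crucial bookkeeping step is to verify that every resonant monomial carries exactly this same phase factor. For $(\boldsymbol l,\boldsymbol j)\in\mathcal R_i$ the transformation~\eqref{eq:cartesian-form} turns $\boldsymbol q^{\boldsymbol l}\bar{\boldsymbol q}^{\boldsymbol j}$ into $\boldsymbol q_{\mathrm s}^{\boldsymbol l}\bar{\boldsymbol q}_{\mathrm s}^{\boldsymbol j}\,e^{\mathrm i\Omega t\,\boldsymbol r\cdot(\boldsymbol l-\boldsymbol j)}$, and combining the inner-resonance relation~\eqref{eq:res-inner} with the external-resonance relation~\eqref{eq:res-forcing} yields $\boldsymbol r\cdot(\boldsymbol l-\boldsymbol j)=r_i$, so that $e^{\mathrm i\Omega t\,\boldsymbol r\cdot(\boldsymbol l-\boldsymbol j)}=e^{\mathrm i r_i\Omega t}$. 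Likewise the forcing term survives only when $\lambda_i^{\mathcal E}\approx\mathrm i\Omega$, i.e. $r_i=1$, in which case $e^{\mathrm i\Omega t}=e^{\mathrm i r_i\Omega t}$ and $S_{\boldsymbol 0,i}=f_i$, while otherwise $f_i=0$. Dividing through by $e^{\mathrm i r_i\Omega t}$ therefore removes all explicit time dependence from the listed terms, and splitting the resulting complex equation into its real and imaginary parts reproduces the $2\times2$ block $\boldsymbol r_i^{\mathrm c}$ verbatim, with the remaining non-resonant, higher-order-in-$\boldsymbol p$ contributions collected into the time-periodic remainder $\mathcal O(\epsilon|\boldsymbol q_{\mathrm s}|)\boldsymbol g_i^{\mathrm c}(\phi)$.

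For parts (ii) and (iii) I would pass to the stroboscopic Poincar\'e map of the time-periodic system~\eqref{eq:ode-reduced-slow-cartesian}. A fixed point $\boldsymbol q_{\mathrm s}^{*}$ of the autonomous truncation~\eqref{eq:ode-reduced-slow-cartesian-leading} is an equilibrium of the $\epsilon$-dependent but time-independent field $\boldsymbol r^{\mathrm c}$, hence a fixed point of that truncation's flow map; mapping it back through~\eqref{eq:cartesian-form} produces $q_i(t)=q_{i,\mathrm{s}}^{*}e^{\mathrm i r_i\Omega t}$, which is periodic because $\boldsymbol r\in\mathbb Q^m$. Since $\boldsymbol q_{\mathrm s}^{*}$ is hyperbolic and the omitted remainder carries the prefactor $\epsilon$ and is higher order in the amplitude, the implicit function theorem applied to the perturbed Poincar\'e map yields a nearby fixed point for $\epsilon$ small enough, i.e. a genuine periodic orbit $\boldsymbol p(t)$ of~\eqref{eq:ssm-red-full}; the FRC is then the zero level set of $\boldsymbol{\mathcal F}^{\mathrm c}_{\epsilon_0}$ swept out as $\Omega$ varies. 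For (iii), the eigenvalues of $D\boldsymbol r^{\mathrm c}(\boldsymbol q_{\mathrm s}^{*})$ are the Floquet exponents of this periodic orbit up to $\mathcal O(\epsilon)$ corrections; as hyperbolicity is an open condition, the count of exponents in the open left and right half-planes is unchanged by the perturbation, so the stability types of the fixed point and of the periodic orbit agree.

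I expect the main obstacle to lie in making the persistence argument of parts (ii)--(iii) fully rigorous, because the non-autonomous remainder $\mathcal O(\epsilon|\boldsymbol q_{\mathrm s}|)\boldsymbol g_i^{\mathrm c}(\phi)$ is of the \emph{same} order in $\epsilon$ as the forcing that generates the fixed point in the first place. One must therefore track the amplitude scaling carefully---near a damped resonance the equilibrium $\boldsymbol q_{\mathrm s}^{*}$ itself vanishes as $\epsilon\to0$, so the remainder is genuinely higher order relative to the $\mathcal O(1)$ Jacobian of the linear part---and confirm that the non-degeneracy hypothesis of the implicit function theorem, equivalently the absence of unit-modulus Floquet multipliers, holds uniformly for small $\epsilon$. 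The computation in part (i), by contrast, is a routine though lengthy substitution once the phase-cancellation identity $\boldsymbol r\cdot(\boldsymbol l-\boldsymbol j)=r_i$ is in hand.
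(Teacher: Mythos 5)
Your part (i) is essentially the paper's own argument: the phase-cancellation identity $\boldsymbol{r}\cdot(\boldsymbol{l}-\boldsymbol{j})=r_i$ is exactly Lemma~\ref{le:ext-res} ($\langle\boldsymbol{l}-\boldsymbol{j}-\boldsymbol{e}_i,\boldsymbol{r}\rangle=0$), and the substitution, factoring of $e^{\mathrm{i}r_i\Omega t}$, and real/imaginary splitting proceed just as in Appendix~\ref{sec:proof-theo-cartesian}. The treatment of the forcing block via \eqref{eq:ssm-nonauto}--\eqref{eq:fi} is also correct.

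The gap is in your resolution of the difficulty you yourself flag in parts (ii)--(iii). You argue that persistence follows because ``the remainder is genuinely higher order relative to the $\mathcal{O}(1)$ Jacobian of the linear part.'' But under the external resonance condition \eqref{eq:res-forcing} with light damping --- precisely the regime of the theorem --- the linear block
\begin{equation}
\boldsymbol{A}_i=\begin{pmatrix}\mathrm{Re}(\lambda_i^{\mathcal{E}}) & r_i\Omega-\mathrm{Im}(\lambda_i^{\mathcal{E}})\\ \mathrm{Im}(\lambda_i^{\mathcal{E}})-r_i\Omega & \mathrm{Re}(\lambda_i^{\mathcal{E}})\end{pmatrix}
\end{equation}
has \emph{all} entries near zero, so the Jacobian of \eqref{eq:ode-reduced-slow-cartesian-leading} at the fixed point is not $\mathcal{O}(1)$; it degenerates as $\epsilon\to0$ at a rate tied to the detuning. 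Consequently both the perturbation of the stroboscopic map \emph{and} the quantity $\mathbb{I}-D\mathcal{P}_0$ whose invertibility the implicit function theorem requires vanish simultaneously, and openness of hyperbolicity at fixed $\epsilon$ gives no uniformity in $\epsilon$. The paper closes this gap by making the degeneration quantitative: it \emph{assumes} the detuning scaling $\boldsymbol{\lambda}^{\mathcal{E}}-\mathrm{i}\boldsymbol{r}\Omega=\epsilon^{q}\boldsymbol{t}$ with $q=1-1/k$ ($k$ the lowest order of nonlinearity in $\mathbf{F}$), deduces the fixed-point amplitude $\mathbf{x}^\star\sim\mathcal{O}(\epsilon^{1-q})$ via the implicit function theorem on the rescaled equation $\mathcal{F}(\hat{\mathbf{x}}^\star,\mu)=0$, blows up the transverse coordinates by $\mathbf{y}=\epsilon^{r}\hat{\mathbf{y}}$ with $r=(1-q)/2$ chosen to balance the quadratic and non-autonomous remainders, and only then applies an averaging-theorem argument (\`a la Guckenheimer--Holmes) to the \emph{normalized} Poincar\'e-map functions $\mathcal{H}_0=(\mathcal{P}_0\hat{\mathbf{y}}-\hat{\mathbf{y}})/\nu_1$ and $\mathcal{H}_\nu=(\mathcal{P}_\nu\hat{\mathbf{y}}-\hat{\mathbf{y}})/\nu_1$, whose $\hat{\mathbf{y}}$-derivatives converge to the invertible matrix $\mathbf{A}_0T$ as $(\nu_1,\nu_2)\to\mathbf{0}$. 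Without this scaling ansatz and the division by $\nu_1=\epsilon^q$, your Poincar\'e-map IFT step does not go through; with it, your ``$\mathcal{O}(\epsilon)$ corrections to Floquet exponents'' claim also needs restating, since the exponents themselves are only $\mathcal{O}(\epsilon^q)\gg\mathcal{O}(\epsilon)$, which is exactly why the comparison works but must be exhibited rather than inferred from openness.
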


\begin{proof}
We present the proof of this theorem in Appendix~\ref{sec:proof-theo-cartesian}
\end{proof}

\begin{remark}
The \emph{Cartesian} coordinates and \emph{polar} coordinates featured in Theorems~\ref{theo:polar} and~\ref{theo:cartesian} are related by
\begin{gather}
\rho_i = ||q_{i,\mathrm{s}}||=\sqrt{\left(q_{i,\mathrm{s}}^{\mathrm{R}}\right)^2+\left(q_{i,\mathrm{s}}^{\mathrm{I}}\right)^2},\nonumber\\
\theta_i=\arg(q_{i,\mathrm{s}})=\textcolor{black}{\texttt{atan2}}(q_{i,\mathrm{s}}^{\mathrm{I}},q_{i,\mathrm{s}}^{\mathrm{R}})
\end{gather}
for $i=1,\cdots,m$. In this paper, we will plot the results of $\rho_i$ instead of $\left(q_{i,\mathrm{s}}^{\mathrm{R}}, q_{i,\mathrm{s}}^{\mathrm{I}}\right)$ for easier interpretation of the vibration amplitudes.
\end{remark}

\subsection{Continuation of fixed points}
\label{sec:cont-fixed-points}
The above theorems indicate that we can find periodic orbits by locating the fixed points of the reduced dynamics for $(\boldsymbol{\rho},\boldsymbol{\theta})$ in polar coordinate representation or $(\boldsymbol{q}_{\mathrm{s}}^{\mathrm{R}},\boldsymbol{q}_{\mathrm{s}}^{\mathrm{I}})$ in Cartesian coordinate representation. The solution manifold of the fixed points is two-dimensional and may be parameterized by the system parameters $(\Omega,\epsilon)$. For a given $\epsilon=\epsilon_0$, a one-dimensional solution manifold is obtained, corresponding to the FRC stated in the theorems.

For a two-dimensional SSM ($m=1$), we have $\boldsymbol{l}=\boldsymbol{j}+\boldsymbol{e}_1$~\cite{ponsioen2019analytic,ponsioen2020model} and hence in equation~\eqref{eq:varphi-ang} $\varphi_1(\boldsymbol{l},\boldsymbol{j})=0$ for all $(\boldsymbol{l},\boldsymbol{j})\in\mathcal{R}_1$. It follows that one can obtain FRC from the joint zero level set of $\boldsymbol{\mathcal{F}}^p_{\epsilon_o}(\rho_1,\theta_1,\Omega)$, which is the intersection of two two-dimensional surfaces in a three-dimensional space parameterized by $(\rho_1,\theta_1,\Omega)$. Following this approach, \emph{all} equilibrium points in a given computational domain for $(\rho_1,\theta_1,\Omega)$ can be found. Therefore, this level-set based method is able to find \emph{isolas}, namely, isolated solution branches of FRC. The reader may refer to~\cite{ponsioen2019analytic,ponsioen2020model,SHOBHIT} for more details about this level-set-based technique.

Under the internal resonance assumption~\eqref{eq:res-inner} with $m\geq2$, the level-set-based detection of fixed points becomes impracticable due to the increment of dimensions. Instead, we seek the fixed points by solving the set of nonlinear algebraic equations defining them numerically. \emph{Parameter continuation} provides a powerful tool to cover the solution manifold of fixed points. Several packages are available to perform such continuation, including \textsc{auto}~\cite{doedel2007auto}, \textsc{matcont}~\cite{dhooge2003matcont} and \textsc{coco}~\cite{dankowicz2013recipes}. The last one is distinguished from the first two because it uses a \textcolor{black}{staged construction paradigm where larger problems are assembled from smaller ones. More details about the staged construction and its applications can be found in~\cite{dankowicz2013recipes}}. 

In this paper, we use the \texttt{ep} toolbox in~\textsc{coco}~\cite{dankowicz2013recipes} to perform the continuation of fixed points of~\eqref{eq:ode-reduced-slow-polar-leading} or~\eqref{eq:ode-reduced-slow-cartesian-leading}. \textcolor{black}{The `\texttt{ep}' stands for \emph{equilibrium point}. Note that the implementation of our method does not necessarily rely on \textsc{coco}. One can use other toolboxes such as \textsc{auto}~\cite{doedel2007auto} and \textsc{matcont}~\cite{dhooge2003matcont} for the continuation of fixed points, or even manually solve for the fixed points of~\eqref{eq:ode-reduced-slow-polar-leading} or~\eqref{eq:ode-reduced-slow-cartesian-leading}.}

Along with the computation of fixed points, \texttt{ep} also calculates the eigenvalues of the Jacobian of the reduced vector field and hence provides information about the stability and bifurcation of the fixed points. Leveraging this capability, we have built a toolbox \texttt{SSM-ep}\footnote{\texttt{SSM-ep} toolbox is included in SSMTool 2.1~\cite{ssmtool21}}, based on the \texttt{ep} toolbox in \textsc{coco}. The \texttt{SSM-ep} toolbox performs one-dimensional continuation of fixed points with respect to changes in $\Omega$ \emph{or} $\epsilon$. For each fixed point obtained in this fashion, the corresponding periodic solution in the SSM $\mathcal{W}(\mathcal{E},\Omega t)$ in normal coordinates $\boldsymbol{p}(t)$ is mapped back to physical coordinates $\boldsymbol{z}(t)$. We provide more details on this inverse mapping in next subsection.

As an starting point of continuation, an initial fixed point is needed. \texttt{SSM-ep} provides two options for finding such an initial fixed point:
\begin{sloppypar}
\begin{itemize}
\item \texttt{fsolve}: The \textsc{matlab} nonlinear equation solver \texttt{fsolve} is called to locate the zeros of the vector field. This solver finds zeros by optimization techniques.
\item \texttt{forward}: A long-time forward simulation is performed and a fixed point is sought based on the fact that the initial condition is now in the basin of attraction of the assumed fixed point.
\end{itemize}
\end{sloppypar}
The above two options ask for an initial guess for the initial point in the optimization or the initial condition in the forward simulation. By default, we set $\boldsymbol{\rho}=\boldsymbol{\theta}=0.1$ in the case of the polar representation (Theorem~\ref{theo:polar}) and $\boldsymbol{q}_{\mathrm{s}}=\boldsymbol{0}$ in the case of the Cartesian representation (Theorem~\ref{theo:cartesian}) as the initial guess. Numerical experiments suggest that these choices are robust in general.

\subsection{FRC in physical coordinates}
\begin{sloppypar}
With the fixed points of the reduced dynamics on the SSM computed, the corresponding periodic orbits on the SSM can be computed from the transformation~\eqref{eq:polar-form} or~\eqref{eq:cartesian-form}. We then need to map the periodic orbits in normal coordinates back to physical coordinates. If $\boldsymbol{p}(t)$ is a trajectory in normal coordinates, we obtain the corresponding trajectory, $\boldsymbol{z}(t)$, in physical coordinates, namely, $\boldsymbol{z}(t)$, by substituting $\boldsymbol{p}(t)$ into~\eqref{eq:ssm-exp-eps}. With the leading order approximation of non-autonomous SSM, we have
\begin{equation}
\boldsymbol{z}(t)=\boldsymbol{W}(\boldsymbol{p}(t))+\epsilon\left(\boldsymbol{x}_{\boldsymbol{0}}e^{\mathrm{i}\Omega t}+\bar{\boldsymbol{x}}_{\boldsymbol{0}}e^{-\mathrm{i}\Omega t}\right)
\end{equation}
where $\boldsymbol{x}_{\boldsymbol{0}}$ is the $\Omega$-dependent solution of the system of linear equations (cf.~\eqref{eq:expphi-}). The stability type of the periodic orbit, $\boldsymbol{z}(t)$, is the same as that of the $\boldsymbol{p}(t)$, given that the SSM is invariant and attracting.
\end{sloppypar}

When a FRC is obtained from a numerical method, it is represented as a set of periodic solutions, $\{\boldsymbol{p}(t,\Omega_i)\}$, for a set of sampled excitation frequencies, $\{\Omega_i\}$. For each sampled $\Omega_i$, the corresponding $\boldsymbol{x}_{\boldsymbol{0}}$ is obtained by solving the system of linear equations~\eqref{eq:expphi-}. \textcolor{black}{All numerical results reported in this paper have been obtained with a nonuniform sampling for $\Omega$, which is automatically determined by \texttt{atlas} algorithms in \textsc{coco}~\cite{dankowicz2013recipes,dankowicz2020multidimensional}. Specifically, we perform \texttt{ep} continuation in a given frequency span, allowing an adaptive change of the continuation step size by the \texttt{atlas} algorithms. This enables continuation along complex paths and results in a non-uniform sampling for $\Omega$. The \texttt{SSM-ep} toolbox supports uniform sampling and the  \textsc{coco}-based nonuniform sampling for $\Omega$. Note that the sampling strategy for $\Omega$ does not necessarily rely on \textsc{coco}. One can simply use uniform sampling or adopt other suitable nonuniform sampling methods that capture complicated geometry of the FRC.}


\subsection{Computational cost}
\label{sec:compLoad}
The main computational cost of FRC from SSM analysis is composed of three factors:
\begin{itemize}
\item A one-time computation of the autonomous SSM,
\item Parameter continuation of the fixed points of the reduced dynamics,
\item $N_\Omega$ times computation of the non-autonomous SSM, where $N_\Omega$ is the number of sampled frequencies in $\{\Omega_i\}$.
\end{itemize}
The second factor is the smallest among the three because 1) the reduced dynamical system on the SSM is $2m$-dimensional and $m$ is equal to two or three in most practical applications; 2) we perform a continuation of \emph{fixed points} instead of \emph{periodic orbits}. In contrast, the computational cost of the first factor increases significantly with the increment of the expansion order of the SSM, as discussed in~\cite{SHOBHIT}. For the third factor, we need to solve a system of linear equations with size $2n$ for each sampled excitation frequency $\Omega$. This process is computationally intensive if the number of samples is large and the system is high dimensional. \emph{Parallel computing} can be utilized to speed up this part of the computation. As an alternative, we may simply ignore the contribution of $\boldsymbol{x}_{\boldsymbol{0}}$, given that $\epsilon$ is a small parameter. Such a simplification has been adopted in the method of normal forms~\cite{touze2006nonlinear,vizzaccaro2020direct}.~\textcolor{black}{Unless otherwise stated, the reported computational time of FRC using SSM in this paper includes all the three factors.}

\section{Examples}
\label{sec:examp}
\textcolor{black}{In this section,} we illustrate our computational algorithm for resonant SSMs in examples of increasing complexity. The numerical package used in these computations is available from~\cite{ssmtool21}.

\subsection{A chain of oscillators}
Consider the chain of nonlinear oscillators shown in Fig.~\ref{fig:three_oscillators_fixed_two_ends} with their equations of motion given by
\begin{gather}
\ddot{x}_1+x_1+ c_1\dot{x}_1+ K(\textcolor{black}{x_1}-\textcolor{black}{x_2})^3=\epsilon f_1\cos\Omega t,\nonumber\\
\ddot{x}_2+x_2+ c_2\dot{x}_2+ K[(\textcolor{black}{x_2}-\textcolor{black}{x_1})^3+(\textcolor{black}{x_2}-\textcolor{black}{x_3})^3]=0,\nonumber\\
\ddot{x}_3+x_3+ c_3\dot{x}_3+ K(\textcolor{black}{x_3}-\textcolor{black}{x_2})^3=0\label{eq:chain-eom}.
\end{gather}
The unforced linearized system around the origin has eigenvalues 
\begin{gather}
\lambda_{1,2}=-\frac{ c_1}{2}\pm\mathrm{i}\sqrt{1-0.25c_1^2}\approx\pm\mathrm{i},\nonumber\\
\lambda_{3,4}=-\frac{ c_2}{2}\pm\mathrm{i}\sqrt{1-0.25c_2^2}\approx\pm\mathrm{i},\nonumber\\
\lambda_{5,6}=-\frac{ c_3}{2}\pm\mathrm{i}\sqrt{1-0.25c_3^2}\approx\pm\mathrm{i},
\end{gather}
\textcolor{black}{provided that $0<c_{1,2,3}\ll1$}. Hence the system has a 1:1:1 internal resonance, yielding $\boldsymbol{r}=(1,1,1)$ in~\eqref{eq:res-forcing} for $\Omega=1$.

\begin{figure}[!ht]
\centering
\includegraphics[width=0.4\textwidth]{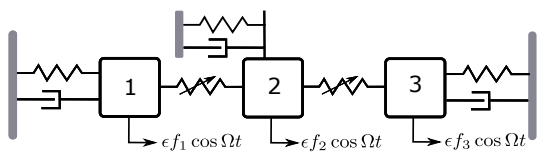}
\caption{A chain of three oscillators with identical natural frequencies.}
\label{fig:three_oscillators_fixed_two_ends}
\end{figure}

With $c_1=5\times10^{-4}$ N.s/m, $c_2=1\times10^{-3}$ N.s/m, $c_3=1.5\times10^{-3}$ N.s/m, $K=1\times10^{-3}$ N/$\mathrm{m}^3$, $f_1=1$ N and $\epsilon=0.005$, we obtain the FRC in the normal coordinates $(\rho_1,\rho_2,\rho_3)$ and in the physical coordinates $(||x_1||_{\infty},||x_2||_{\infty},||x_3||_{\infty})$ in Fig.~\ref{fig:oneToneTone}. Here and in the upcoming examples, $||\bullet||_{\infty}:=\max_{t\in[0,T]}||\bullet(t)||$ denotes the amplitude of the periodic response.

The FRC in Fig.~\ref{fig:oneToneTone} displays rich dynamic behavior due to modal interactions, including stable and unstable periodic orbits, as well as saddle-node and Hopf bifurcations. Recall that only the first degrees-of-freedom (DOF) is excited. However, we observe nontrivial dynamics in the second and third DOF, resulting from modal interactions due to the 1:1:1 internal resonance.


We now use the \texttt{po}-toolbox of \textsc{coco} to illustrate the accuracy and efficiency of the SSM-based FRC analysis. In \texttt{po}, a periodic orbit is sought as the solution to a boundary-value problem with periodic boundary condition and an appropriate phase condition if the system is autonomous. Then the collocation method is used to discretize the boundary-value problem and parameter continuation is performed to obtain a solution manifold of periodic orbits representing the FRC. In the continuation with \texttt{po}, a variational problem is solved for each periodic orbit and then the stability of the periodic orbit is obtained. As seen in Fig.~\ref{fig:oneToneTone}, the results from SSM match closely with the reference solutions from \texttt{po} (labelled as \emph{Collocation}). The computation here was performed on an Intel(R) Core(TM) i7-6700HQ processor (2.60 GHz) of a laptop. The computational times for the SSM analysis and the \texttt{po} toolbox were 27.5 seconds and 56.4 seconds, respectively. This speed-up gain will become substantially more significant in higher dimensional problems, as will see in later examples. Indeed, the dimension of the continuation problem of fixed  point is $2m$. For most practical applications with internal resonance, we have $m\in\{2,3\}$, independently of $n$. In contrast, the dimension of the continuation problem of periodic orbits is $2nk$, which increases linearly with respect to $n$. We typically have $k\sim\mathcal{O}(100)$ in the collocation discretization. Such a significant difference between the dimensions of the two continuation problems results in a major speed-up gain.

\begin{figure}[!ht]
\centering
\includegraphics[width=0.45\textwidth]{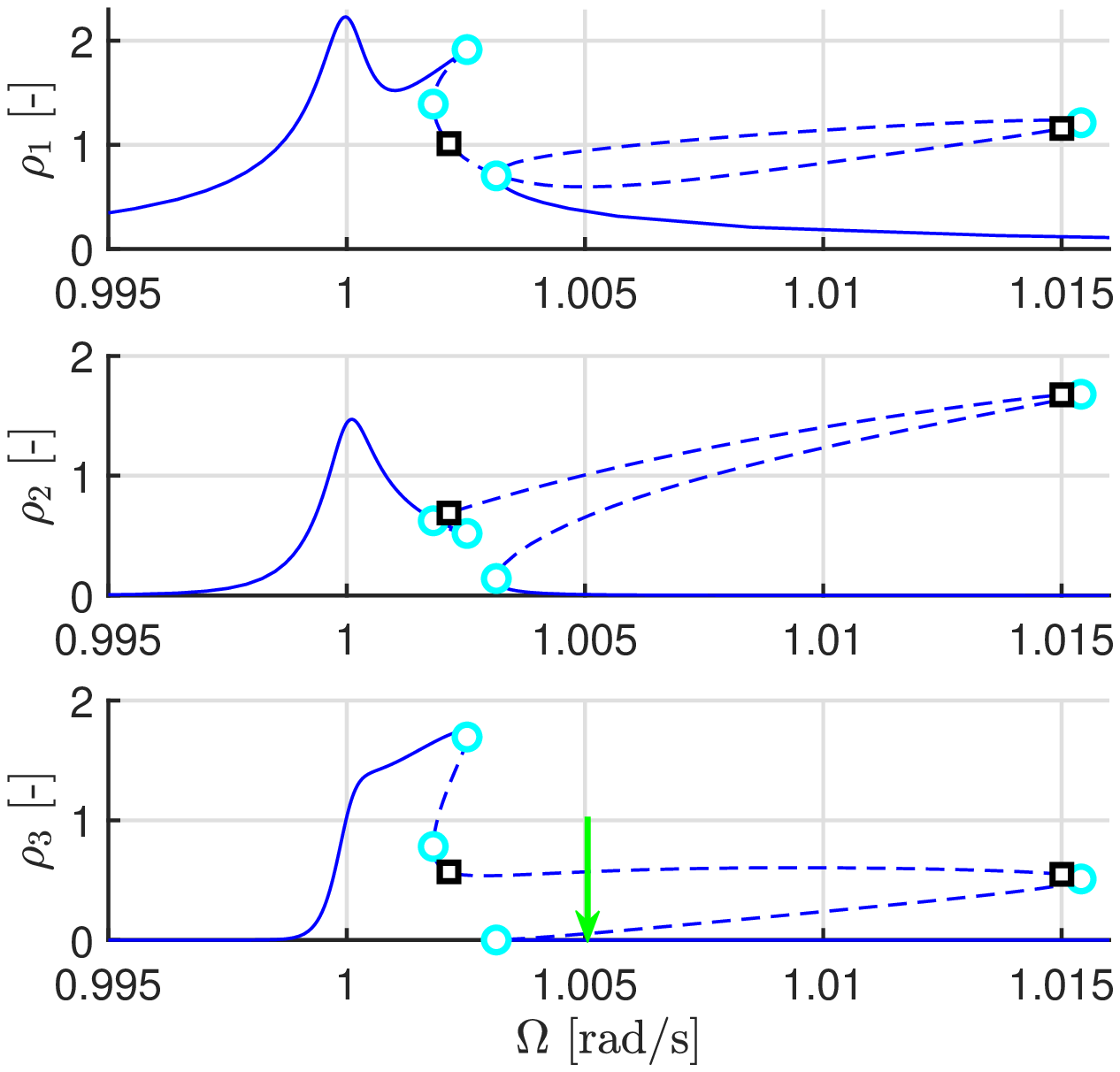}
\includegraphics[width=0.45\textwidth]{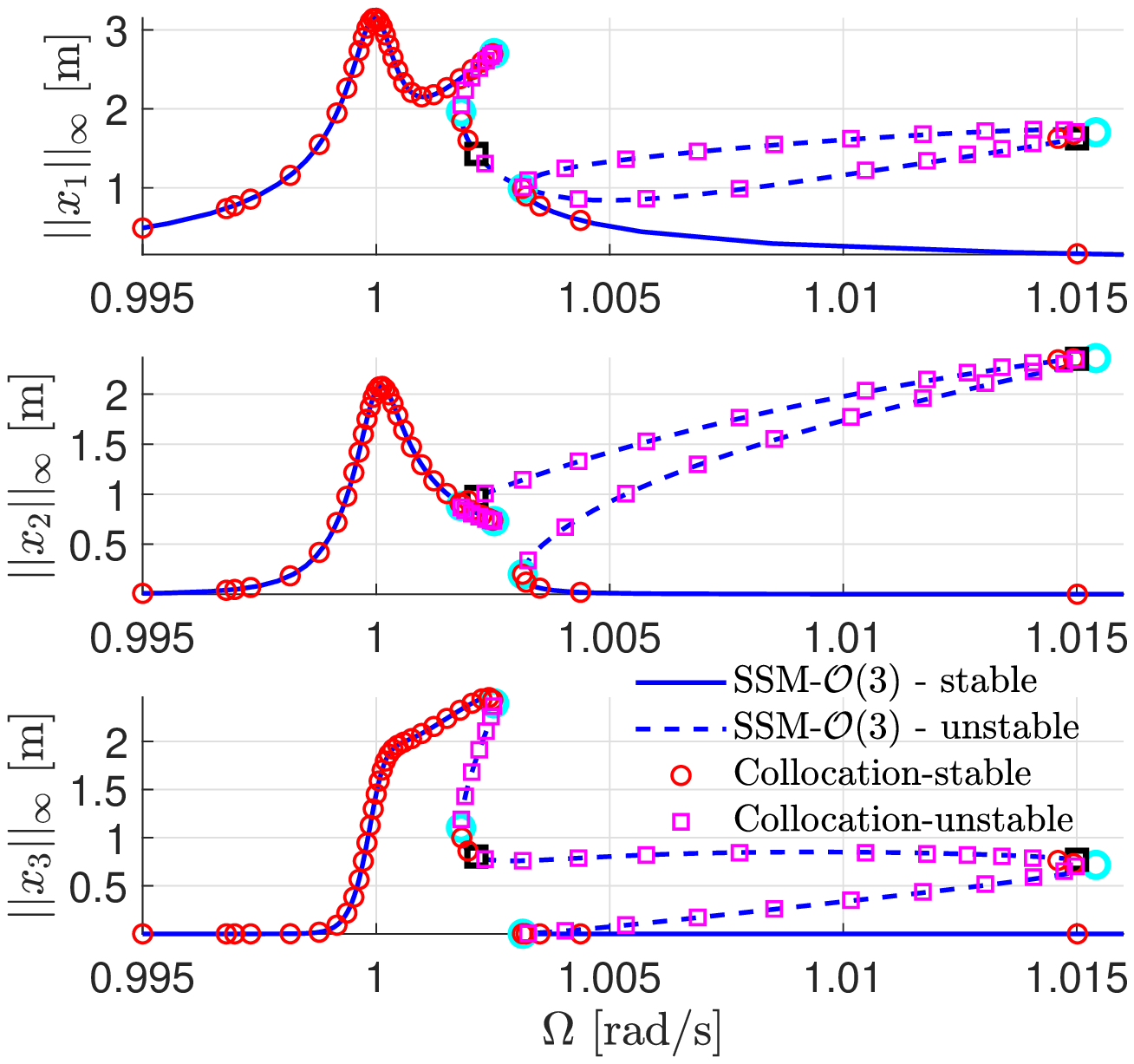}
\caption{The FRCs of the nonlinear oscillator chain~\eqref{eq:chain-eom} in normal coordinates ($\rho_1,\rho_2,\rho_3$) and physical coordiantes ($x_1,x_2,x_3$). Here, and throughout the paper, the solid lines indicate stable solution branches, while dashed lines mark unstable solution branches. The cyan circles denote saddle-node bifurcation points and black squares denote Hopf bifurcation points. The label SSM-$\mathcal{O}(k)$ suggests that the expansion order of SSM is $k$. In the panels for FRC in physical coordinates, the results obtained by continuation of periodic orbits with the collocation method are presented as well to illustrate the accuracy of the SSM-based results. The SSM results plotted here are obtained with {Cartesian} coordinate representation. The continuation path in {polar} coordinates terminates at $\Omega\approx1.0054$ with $\rho_3\approx2.08\times10^{-8}$ (see the green arrow in the third panel), which triggers near singularity and then the failure of the Newton iteration.}
\label{fig:oneToneTone}
\end{figure}

In this example, SSM computations were conducted in both polar and Cartesian coordinates. The two representations generate consistent results whenever results can be obtained. As we discussed in section~\ref{sec:theos}, however, the \emph{polar} coordinate representation can have the singularity issue. Indeed, the continuation of fixed points in the vector field with \emph{polar} representation terminates at $\Omega\approx1.0054$ rad/s where $\rho_3\approx2.08\times10^{-8}$, as indicated by the green arrow in Fig.~\ref{fig:oneToneTone}. Such a termination results from the failure of Newton iteration at a nearly singular point where $\rho_3\approx0$. By contrast, the continuation of fixed points in the vector field with \emph{Cartesian} representation is successfully performed in the given range of $\Omega$ with no singularity encountered.

No reduction is involved in this example, namely, $m=n$. It follows that the SSM analysis here is equivalent to the application of the method of normal form~\cite{neild2011applying}. Unlike the approach in~\cite{neild2011applying}, however, no assumptions are made here on the smallness of the nonlinearity in the SSM analysis. In the remaining examples, we will have $m\ll n$ to demonstrate the effectiveness and efficiency of SSM-based model reduction.

\subsection{A prismatic beam with axial stretching}
\label{sec:prismatic_beam}
Next we consider a forced hinged-clamped beam of the type treated in~\cite{nayfeh1974nonlinear}. Let $E$ be the elastic modulus, $r$, $A$ and $I$ be the radius of gyration, area and moment of inertia of the cross section, $L$ be the characteristic length and $\rho$ be the density of the beam. The governing equation for the transverse deflection $w(x,t)$ of the beam in dimensionless form is given by~\cite{nayfeh1974nonlinear}
\begin{gather}
\frac{\partial^4{w}}{\partial {x}^4}+\frac{\partial^2{w}}{\partial{t}^2}=\epsilon\left(H\frac{\partial^2{w}}{\partial{x}^2}+p-2c\frac{\partial{w}}{\partial{t}}\right),\nonumber\\
w(0)=w{''}(0)=w(l)=w'(l)=0\label{eq:beam-pde}.
\end{gather}
Here $H$ represents \textcolor{black}{the nonlinear} axial stretching force due to large deformation
\begin{equation}
    H = \frac{1}{\textcolor{black}{2l}}\int_0^l\left(\frac{\partial{w}}{\partial{x}}\right)^2\mathrm{d}{x},
\end{equation}
${x}$, ${t}$ are dimensionless length and time; ${p}$ and ${c}$ are dimensionless distributed loading and damping coefficients; and $\epsilon$ characterizes the slenderness ratio of the beam. These dimensionless quantities are defined as follows in~\cite{nayfeh1974nonlinear}:
\begin{gather}
x=\frac{\hat{x}}{L},\quad
t=\sqrt{\frac{Er^2}{\rho L^4}}\hat{t},\quad
w=\frac{\hat{w}L}{r^2},\nonumber\\
p=\frac{\hat{p}L^7}{r^6 EA},\quad
c=\frac{\hat{c}L^4}{2r^3A\sqrt{\rho E}},\quad
\epsilon=\frac{r^2}{L^2},
\end{gather}
where $\hat{x}$, $\hat{t}$, $\hat{w}$ are the length, time and transverse deflection with units; $\hat{p}$ and $\hat{c}$ are distributed loading and damping coefficient. Here we have $\hat{x}\in[0,lL]$ and then $x\in[0,l]$. 

\begin{sloppypar}
With a modal expansion
\begin{equation}
w(x,t)=\sum_{i=1}^n\psi_i(x)u_i(t)
\end{equation}
followed by a Galerkin projection, the governing partial-differential equation~\eqref{eq:beam-pde} is transferred into a set of ordinary differential equations
\begin{align}
\label{eq:beam-odes}
& \ddot{u}_i+\omega_i^2u_i\nonumber\\
& =\epsilon\left(-2c\dot{u}_i+f_i\cos\Omega t+\frac{1}{2l}\sum_{j,k,s}\alpha_{ijks}u_ju_ku_s\right),
\end{align}
for $i=1,\cdots,n$, where
\begin{gather}
f_i=\int_0^l \psi_i(x)p(x)\mathrm{d}x,\\ \alpha_{ijks}=\left(\int_0^l\psi_i(x)\psi_s^{''}(x)\mathrm{d}x\right)\left(\int_0^l\psi_j^{\prime}(x)\psi_k^{\prime}(x)\mathrm{d}x\right).
\end{gather}
Here the eigenfunction $\psi_i(x)$ and the corresponding natural frequency $\omega_i$ are the solutions of the eigenvalue problem
\begin{gather}
\frac{d^4\psi_i}{dx^4}-\omega_i^2\psi_i=0,\nonumber\\ \psi_i(0)=\psi_i^{''}(0)=\psi_i(l)=\psi_i^{'}(l)=0,
\end{gather}
whose solutions have been documented in~\cite{nayfeh1974nonlinear}
\end{sloppypar}

For $l=2$, the first two modes have a near 1:3 internal resonance, \textcolor{black}{i.e., $\omega_2\approx3\omega_1$,}
where $\omega_1=3.8533$ and $\omega_2=12.4927$ .
The forced response of this system under external harmonic response has been investigated in~\cite{nayfeh1974nonlinear} with the method of multiple scale (MMS) at $\Omega\approx\omega_1$ and $\Omega\approx\omega_2$. Here we use reduction to the 1:3 resonant SSM to study such a system and compare the results obtained by the two methods. With $n=10$, we take the first two pairs of modes as the master spectral space, namely, $\mathcal{E}=\Span\{v_1,\bar{v}_1,v_2,\bar{v}_2\}$. Consequently, the dimension of the phase space for the full system is 20 while the reduced system on the resonant SSM will be four-dimensional. The physical coordinates $\boldsymbol{x}$ in~\eqref{eq:eom-second-full} are actually modal coordinates $\boldsymbol{u}$ in this example.~\textcolor{black}{Note that in order to apply SSM reduction on this problem, we do not require the nonlinear and damping terms to be scaled by $\epsilon$, in contrast to MMS.}

\subsubsection{Primary resonance of the first mode}
Let $\epsilon=1\times10^{-4}$, $c=100$, $\epsilon f_1=5$ and $f_2=\cdots=f_{10}=0$, we are interested in the forced response for $\Omega\approx\omega_1$. The first mode is excited and hence $\rho_1\neq0$. Due to the internal resonance, $\rho_2\neq0$ as well. This allows the use of {polar} coordinates with $\boldsymbol{r}=(1,3)$. The obtained FRCs in the coordinates $(\rho_1,\rho_2)$ and $(u_1,u_2)$ for $\Omega\in[3.7782,4.0867]$ are presented in Fig.~\ref{fig:FirstMode}. Although nonzero, $\rho_2$ is still small compared to $\rho_1$, and  hence the response of the system is mainly contributed by the first mode, as seen in the first two panels of Fig.~\ref{fig:FirstMode}. An excellent match between the results of SSM analysis and MMS is observed for $||u_1||_{\infty}$ while discrepancies occur in the FRC of $||u_2||_{\infty}$. We also use the \texttt{po} toolbox in \textsc{coco} to extract the FRC of the full system as the reference solution to compare the accuracy of solutions obtained by the two methods. The results from \texttt{po} are labelled as \emph{Collocation}. As can be seen in the last panel of Fig.~\ref{fig:FirstMode}, SSM reduction yields more accurate results than MMS.

\begin{figure}[!ht]
\centering
\includegraphics[width=0.45\textwidth]{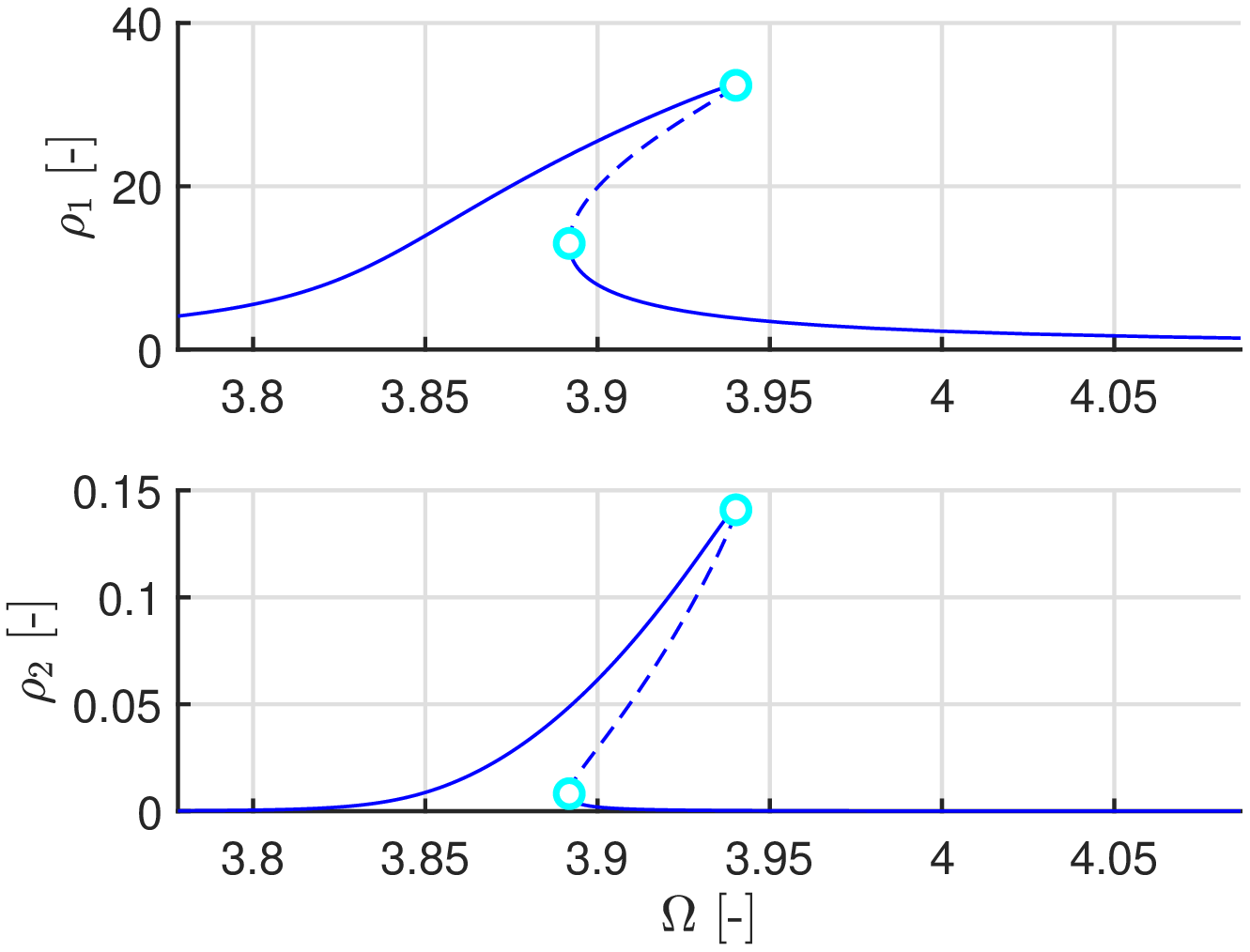}
\includegraphics[width=0.45\textwidth]{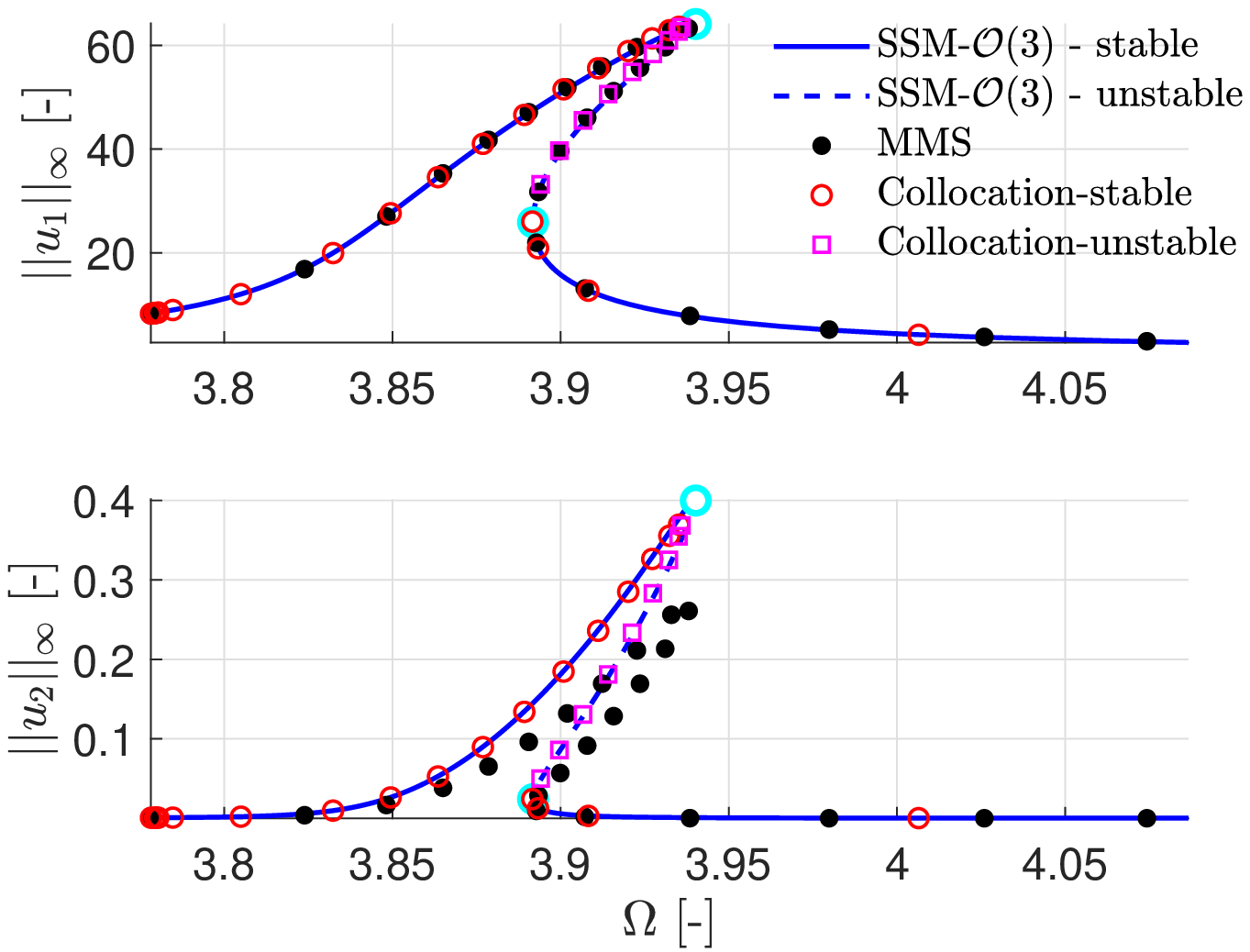}
\caption{The FRC for the forced beam equations~\eqref{eq:beam-odes} in normal coordinates ($\rho_1,\rho_2$) and modal coordinates ($u_1,u_2$) with $\Omega\approx\omega_1=3.8533$. The results obtained by the method of multiple scales (MMS), as well as the continuation of periodic orbits with the collocation method, are presented for comparison and validation.}
\label{fig:FirstMode}
\end{figure}

In MMS, the response of $u_2$ at steady state is not affected by $f_2$ because $f_2$ is not involved in the corresponding {secular} equation when $\Omega\approx\omega_1$~\cite{nayfeh1974nonlinear}. In addition, MMS predicts $u_3=\cdots=u_{10}=0$, independently of $f_{1,\cdots,10}$~\cite{nayfeh1974nonlinear}. In contrast, the results of SSM depend on $f_{1,\cdots,10}$ because the \textcolor{black}{non-autonomous} SSM depends on the external forcing, as can be seen in equation~\eqref{eq:expphi-}. Therefore, SSM reduction yields more accurate results than MMS. To further demonstrate this advantage, we consider the case $\epsilon f_1=\cdots=\epsilon f_{10}=5$, in which, all 10 modes are excited. In this case, MMS returns the same results as in previous loading case, while the results obtained by SSM reduction change, as seen in Fig.~\ref{fig:FirstMode_AllF}. Indeed, the amplitude $||u_2||_{\infty}$ at $\Omega\leq3.85$ and $\Omega\geq3.95$ increases due to the non-vanishing $f_2$. In addition, SSM reduction correctly predicts the nontrivial response of $u_3$, whereas MMS predicts zero response in $u_3$.

\begin{figure}[!ht]
\centering
\includegraphics[width=0.45\textwidth]{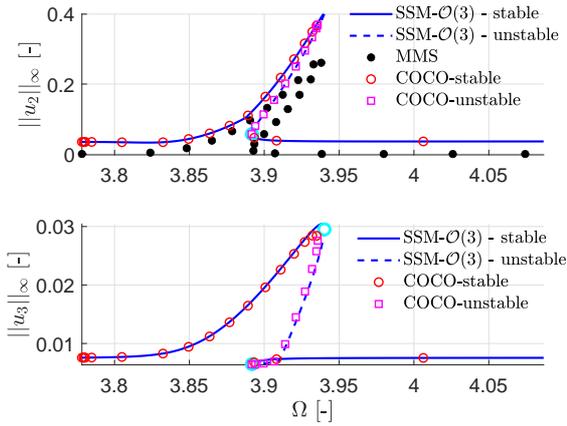}
\caption{The FRC for the forced beam equations~\eqref{eq:beam-odes} in modal coordinates $(u_2,u_3)$ for $\Omega\approx\omega_1=3.8533$ and $\epsilon f_1=\cdots=\epsilon f_{10}=5$. The MMS incorrectly predicts $||u_3||_{\infty}\equiv0$ (not shown).}
\label{fig:FirstMode_AllF}
\end{figure}

\begin{sloppypar}
A further advantage of SSM analysis over MMS is that the reduced dynamics on the SSM is four dimensional while the MMS has to be applied to the full system. Therefore, the computational cost of SSM reduction is smaller than that of MMS when it comes to the size of problems. In addition, MMS is a symbolic method that requires significant efforts in symbolic computation and derivation. The SSM computation, in contrast, is a fully automated, recursive numerical procedure~\cite{ponsioen2018automated,SHOBHIT}.
\end{sloppypar}

\subsubsection{Primary resonance of the second mode}
Letting $\epsilon=1\times10^{-4}$, $c=10$, $f_1=0$, $\epsilon f_2=40$ and $f_3=\cdots=f_{10}=0$, we are interested in the forced response for $\Omega\approx\omega_2$. In this setting, the second mode is excited and hence $\rho_2\neq0$. The first mode, however, can be either excited or inactive. As a consequence, there are two solution branches where $\rho_1=0$ and $\rho_1\neq0$, respectively~\cite{nayfeh1974nonlinear}. Given the possibility that $\rho_1=0$, we use {Cartesian} coordinates here with $\boldsymbol{r}=(1/3,1)$.

\begin{sloppypar}
We first consider the solution branch with non-vanishing $\rho_1$. Providing an initial solution on such a branch to parameter continuation is a challenging task because this branch is an isola: it is isolated from the branch with vanishing $\rho_1$~\cite{ponsioen2019analytic}. Here we provide an initial guess for parameter continuation based on the solution from the MMS. The FRCs obtained in this way in the coordinates $(\rho_1,\rho_2)$ and $(u_1,u_2)$ for $\Omega\in[11.7431, 13.9918]$ are shown in Fig.~\ref{fig:SecondModeNonZero}. From the first two panels, we have $\mathcal{O}(\rho_1)\sim\mathcal{O}(\rho_2)$ for $\Omega\geq 13$ and $\rho_1\gg\rho_2$ for $\Omega\leq 12.5$. Therefore, the system response can be dominated by the first mode although the external forcing is applied to the second mode ($f_1=0, f_2\neq0$). This intriguing phenomenon is a result of the modal interaction arising from the internal resonance. As can be seen in the last two panels, the results of the two methods match well.
\end{sloppypar}

\begin{figure}[!ht]
\centering
\includegraphics[width=0.45\textwidth]{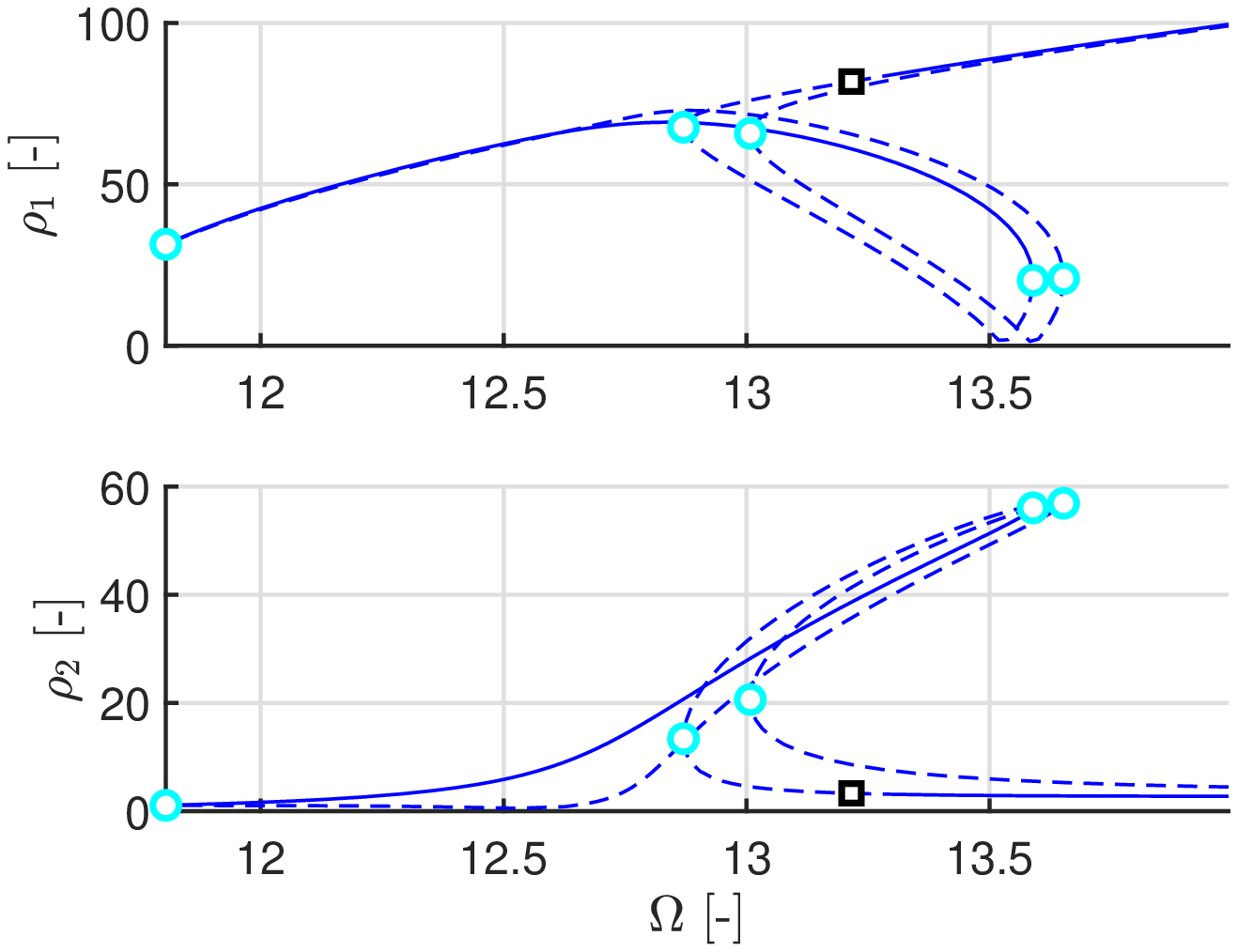}
\includegraphics[width=0.45\textwidth]{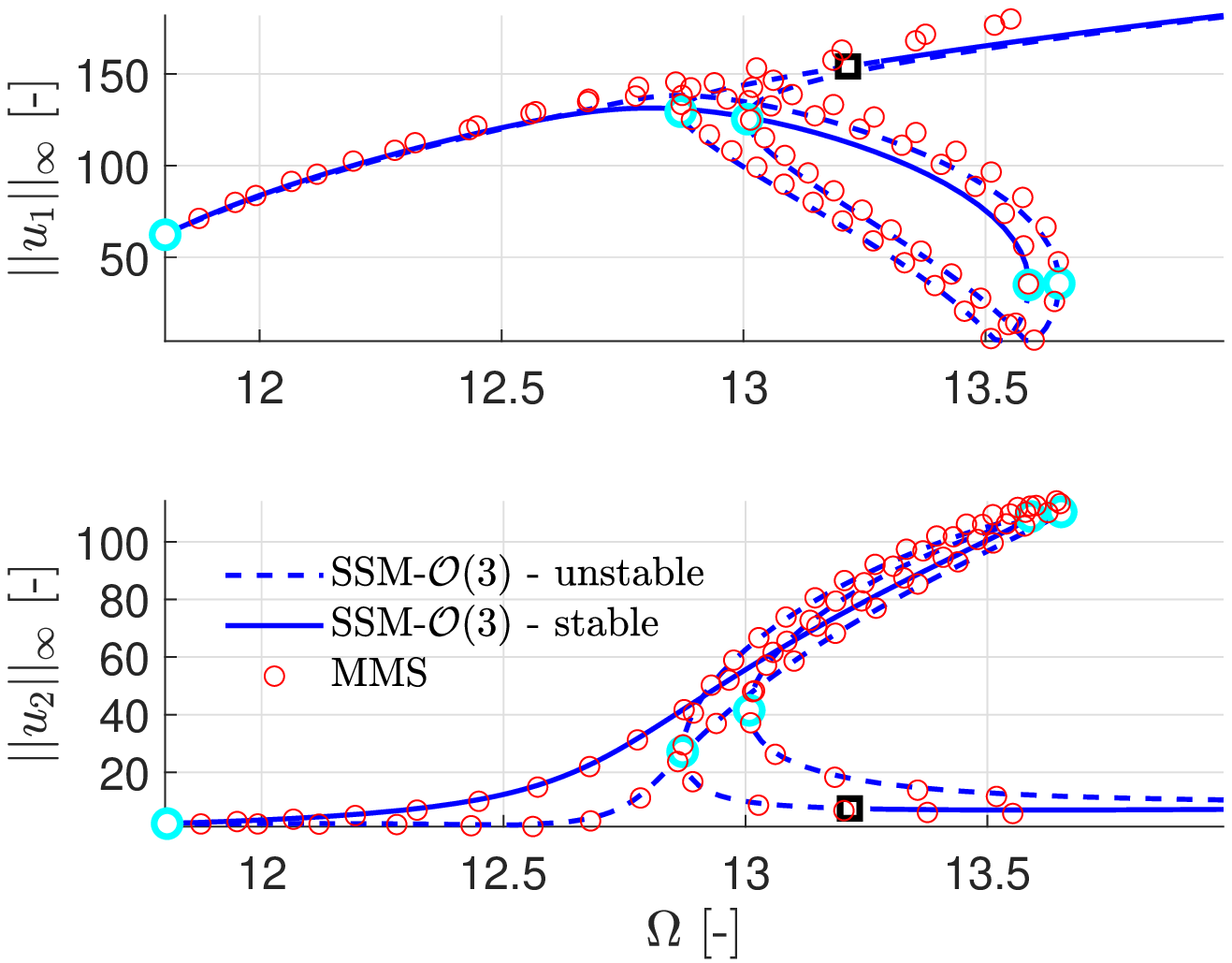}
\caption{The FRC for the forced beam equations~\eqref{eq:beam-odes} in normal coordinates ($\rho_1,\rho_2$) and modal coordinates ($u_1,u_2$) for $\Omega\approx\omega_2=12.4927$ and $\rho_1\neq0$. The results obtained by the method of multiple scale (MMS) are also shown for comparison.}
\label{fig:SecondModeNonZero}
\end{figure}

We then move to the solution branch with vanishing $\rho_1$. The FRCs in the coordinates $(\rho_1,\rho_2)$ and $(u_1,u_2)$ are shown in Fig.~\ref{fig:SecondModeZero}. From the first two panels, we have $\rho_1\equiv0$ and the FRC of $\rho_2$ is similar to that of forced Duffing oscillator. Here the {upper} and {lower} branches are computed separately because their connecting point, namely, the other saddle-node (SN) point, is outside the computational domain of $\Omega$. In fact the other SN point is not detected for $\Omega\leq\omega_3$. In the last panel, we observe a good match between the results of $||u_2||_{\infty}$ obtained by the two methods. Again, MMS predicts vanishing $u_1$. In contrast, SSM-based analysis is more accurate, predicting non-vanishing $u_1$ even though $\rho_1\equiv0$, as can be seen in the third panel of Fig.~\ref{fig:SecondModeZero}.

\begin{figure}[!ht]
\centering
\includegraphics[width=0.45\textwidth]{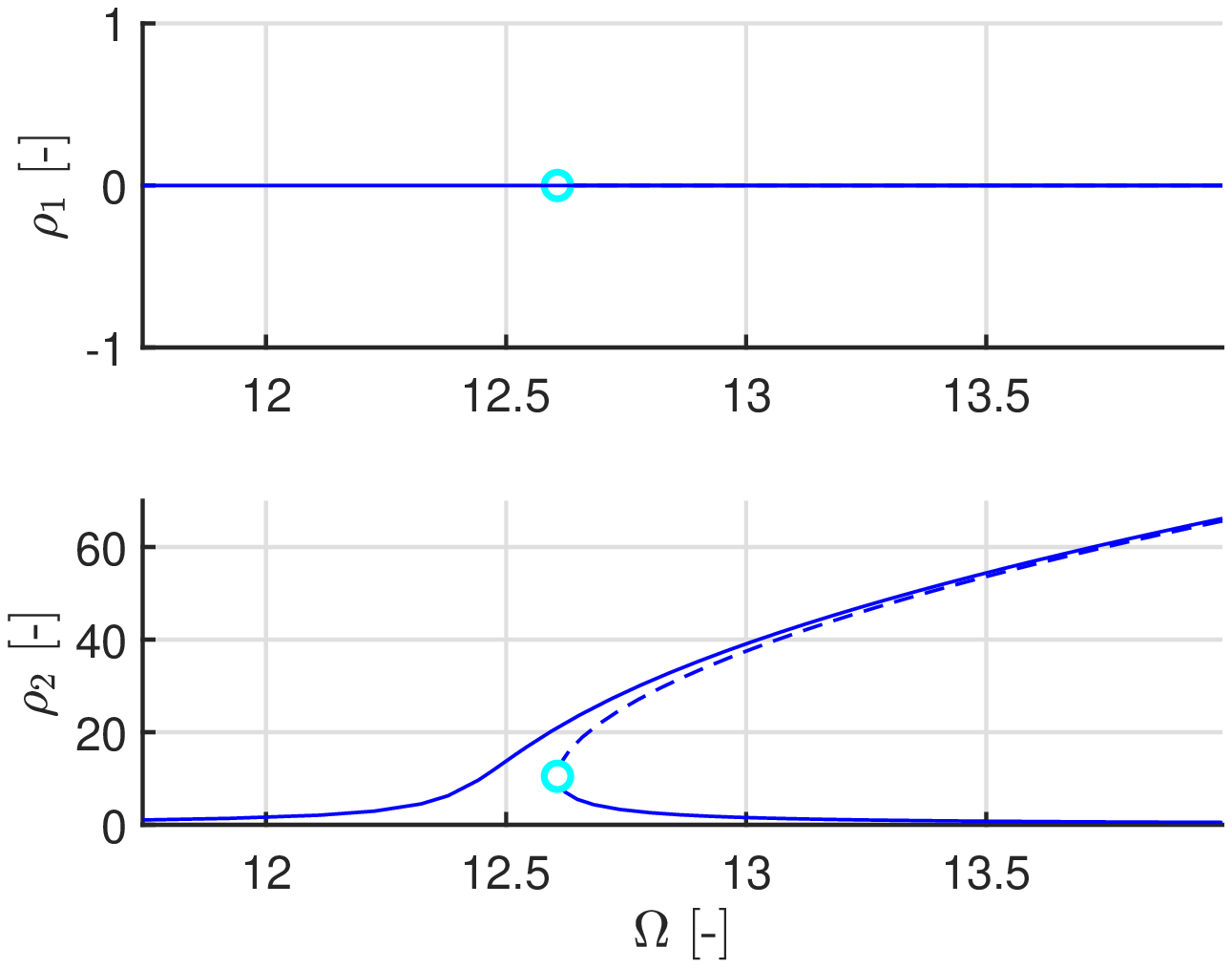}
\includegraphics[width=0.45\textwidth]{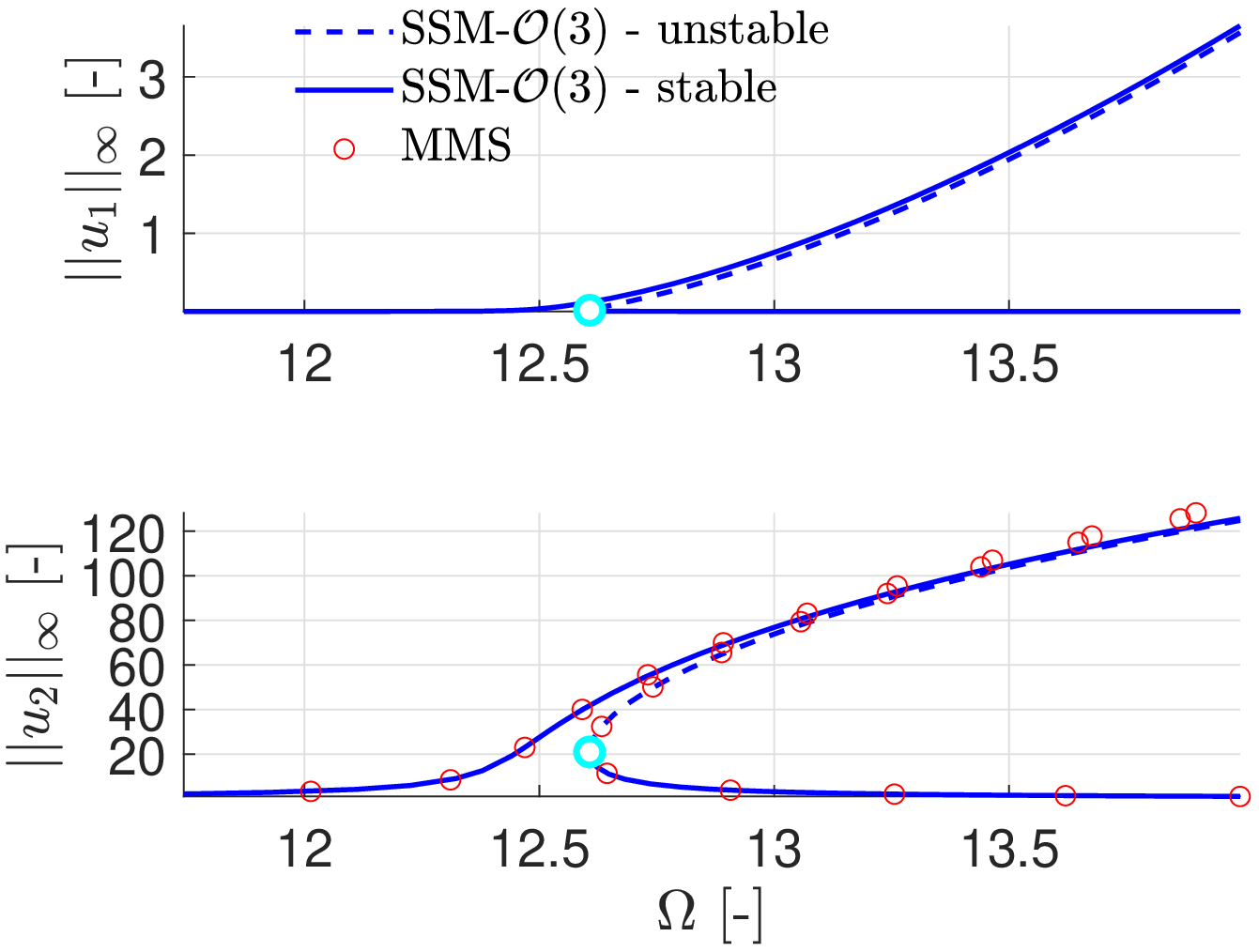}
\caption{The FRC for the forced beam equations~\eqref{eq:beam-odes} in normal coordinates ($\rho_1,\rho_2$) and modal coordinates ($u_1$,$u_2$) for $\Omega\approx\omega_2=12.4927$ and $\rho_1\equiv0$. The MMS predicts $||u_1||_{\infty}\equiv0$ (not shown).}
\label{fig:SecondModeZero}
\end{figure}

\textcolor{black}{\subsection{A viscoelastic beam with gyroscopic force}}
\textcolor{black}{Next, to demonstrate the capability of our SSM reduction for systems with \emph{gyroscopic} and \emph{nonlinear damping} forces, we consider a viscoelastic axially moving beam subject to harmonic base excitation, illustrated in Fig.~\ref{fig:axial-moving-beam}}.

\begin{figure}[!ht]
\centering
\includegraphics[width=0.45\textwidth]{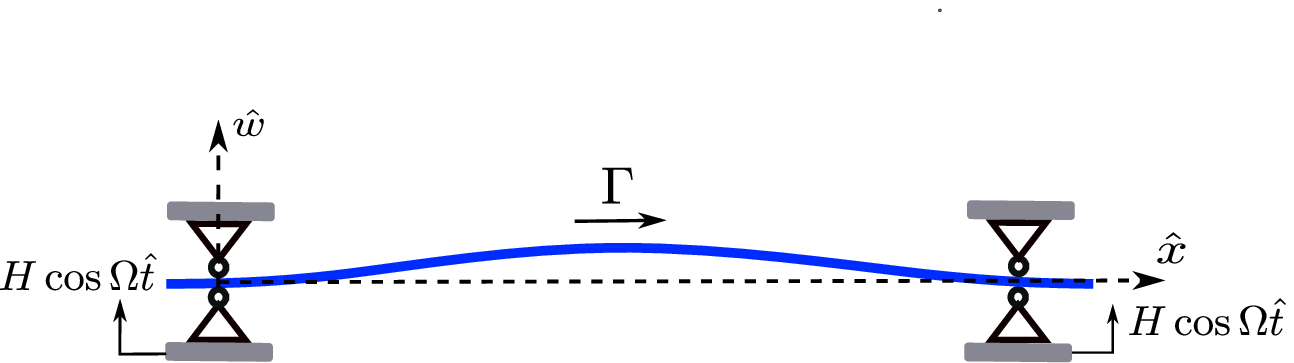}
\caption{\textcolor{black}{A pinned-pinned axially moving beam subject to harmonic base excitation.}}
\label{fig:axial-moving-beam}
\end{figure}

\textcolor{black}{The mechanics of axially moving slender beams and strings have received much attention in the past several decades in connection with power transmission belts, tramways, and band saw blades etc~\cite{pellicano2000nonlinear}. Consider a uniform axially moving viscoelastic beam, with density $\rho$, cross-section area $A$, moment of inertia  $I$ and initial tension $P$, travelling at an axial speed $\Gamma$ between two simple supports that are distance $L$ apart. The support foundation is subject to a harmonic oscillation $H\cos\Omega\hat{t}$. Let the transverse displacement of the beam observed in a frame attached to the oscillating foundation be $\hat{w}(\hat{x},\hat{t})$, which is a function of time $\hat{t}$ and axial coordinate $\hat{x}$. With viscoelastic Kelvin constitutive law 
\begin{equation}
\label{eq:viscoelastic}
    \hat{\sigma}=E\hat{\epsilon}+\eta\frac{\partial\hat{\epsilon}}{\partial\hat{t}},
\end{equation}
where $\hat{\sigma}$ and $\hat{\epsilon}$ denote stress and strain, and $E$ and $\eta$ are the Young’s modulus and viscosity of the beam material, the equation of motion is given by~\cite{xiaodong2006non}
\begin{align}
\label{eq:eom-movingbeam}
    & \rho A \left(\frac{\partial^2\hat{w}}{\partial\hat{t}^2}+2\Gamma\frac{\partial^2\hat{w}}{\partial\hat{x}\partial\hat{t}}+\Gamma^2\frac{\partial^2\hat{w}}{\partial\hat{x}^2}\right)-P\frac{\partial^2\hat{w}}{\partial\hat{x}^2}+EI\frac{\partial^4\hat{w}}{\partial\hat{x}^4}\nonumber\\
    & +\eta I\frac{\partial^5\hat{w}}{\partial\hat{t}\partial\hat{x}^4}=\frac{A}{L}\int_0^L\left[\frac{E}{2}\left(\frac{\partial\hat{w}}{\partial\hat{x}}\right)^2+\eta\frac{\partial\hat{w}}{\partial\hat{x}}\frac{\partial^2\hat{w}}{\partial\hat{x}\partial\hat{t}}\right]\mathrm{d}\hat{x}\frac{\partial^2\hat{w}}{\partial\hat{x}^2}\nonumber\\
    & +\rho AH\Omega^2\cos\Omega\hat{t}
\end{align}
and boundary conditions
\begin{equation}
    \hat{w}(0,\hat{t})=\hat{w}(L,\hat{t})=0,\,\, \frac{\partial^2\hat{w}}{\partial\hat{x}^2}(0,t)=\frac{\partial^2\hat{w}}{\partial\hat{x}^2}(L,t)=0.
\end{equation}}

\textcolor{black}{Similarly to ~\cite{xiaodong2006non}, we introduce the following dimensionless variables and parameters
\begin{gather}
    w=\frac{\hat{w}}{L},\quad x=\frac{\hat{x}}{L},\quad t=\hat{t}\sqrt{\frac{P}{\rho AL^2}},\nonumber\\
    \gamma=\Gamma\sqrt{\frac{\rho A}{P}},\quad k_f^2=\frac{EI}{PL^2},\quad
    \alpha=\frac{I\eta}{L^3\sqrt{\rho AP}},\nonumber\\ k_1=\sqrt{\frac{EA}{P}},\quad\omega=\Omega\sqrt{\frac{\rho AL^2}{P}},\quad \epsilon=\frac{H}{L},
\end{gather}
to obtain the nondimensionalized form of~\eqref{eq:eom-movingbeam} as
\begin{align}
\label{eq:eom-movingbeam-dimless}
    & \frac{\partial^2{w}}{\partial{t}^2}+2\gamma\frac{\partial^2{w}}{\partial{x}\partial{t}}+(\gamma^2-1)\frac{\partial^2{w}}{\partial{x}^2}+k_f^2\frac{\partial^4{w}}{\partial{x}^4}+\alpha\frac{\partial^5{w}}{\partial{t}\partial{x}^4}\nonumber\\
    & =\int_0^1\left[\frac{1}{2}k_1^2\left(\frac{\partial{w}}{\partial{x}}\right)^2+\alpha\frac{k_1^2}{k_f^2}\frac{\partial{w}}{\partial{x}}\frac{\partial^2{w}}{\partial{x}\partial{t}}\right]\mathrm{d}{x}\frac{\partial^2{w}}{\partial{x}^2}\nonumber\\ &\quad +\epsilon\omega^2\cos\omega {t}.
\end{align}
The equation above is consistent with the literature (equivalent to equation (15) in~\cite{xiaodong2006non} when the nonlinear damping effects are ignored; equivalent to equation (6) in~\cite{pellicano2000nonlinear} when both damping and forcing terms are ignored).}

\textcolor{black}{Similar to the previous example, we apply the Galerkin approach to discretize the equation of motion. With a modal expansion
\begin{equation}
    w(x,t)=\sum_{j=1}^n \sin(j\pi x)u_j(t),
\end{equation}
the Galerkin projection yields a system of ODEs as follows
\begin{equation}
\label{eq:eom-movingbeam-ode}
    \ddot{\boldsymbol{u}}+(\boldsymbol{C}+\boldsymbol{G})\dot{\boldsymbol{u}}+\boldsymbol{K}\boldsymbol{u}+\boldsymbol{f}(\boldsymbol{u},\dot{\boldsymbol{u}})=\epsilon\omega^2\boldsymbol{g}\cos\omega t,
\end{equation}
where $\boldsymbol{u}=(u_1,\cdots,u_n)$ and similarly to~\cite{pellicano2000nonlinear}, we have
\begin{gather}
    C_{ij}=\alpha(i\pi)^4\delta_{ij},\quad G_{ij}=4\gamma\frac{ ij}{i^2-j^2}\left(1-(-1)^{i+j}\right),\nonumber\\ K_{ij}=\left(k_f^2(i\pi)^4-(\gamma^2-1)(i\pi)^2\right)\delta_{ij}\nonumber,\\
    f_i = \frac{1}{4} k_1^2\pi^4i^2\sum_j\left({j^2}u_j^2\right)u_i+\frac{\alpha}{2}\frac{k_1^2}{k_f^2}\pi^4i^2\sum_j\left({j^2}u_j\dot{u}_j\right)u_i,\nonumber\\  g_i= \frac{1-(-1)^i}{i\pi}
\end{gather}
for $i,j=1,\cdots, n$. The $\delta_{ij}$ above is Kronecker delta and $G_{ii}=0$. Note that $\boldsymbol{G}^\mathrm{T}=-\boldsymbol{G}$ is a gyroscopic matrix and we have cubic nonlinear damping due to the viscoelastic constitutive law~\eqref{eq:viscoelastic}.}

\textcolor{black}{Following~\cite{tang2019nonlinear}, the parameters of the model are chosen as $A=1.2\times10^{-3}\,\mathrm{m}^2$, $I=9\times10^{-8}\,\mathrm{m}^3$, $\rho=7680\,\mathrm{kg/{m}^3}$, $E=30\times10^9\,\mathrm{Pa}$, $L=1\,\mathrm{m}$ and $P=6.75\times10^4\mathrm{N}$. The dimensionless parameters are obtained as $k_f=0.2$ and $k_1=23.0940$. With $\gamma=0.5128$, the first two natural frequencies of the linear, unforced part of~\eqref{eq:eom-movingbeam-ode} are given by $\omega_1\approx3.1954$ and $\omega_2\approx9.5862\approx3\omega_1$. In the following computation, we select the viscoelastic parameter $\eta=1\times10^{-4}E$ and $n=10$.}

\textcolor{black}{Similarly to the previous example, we take the first two pairs of modes as the master spectral subspace to account for the near 1:3 internal resonance, resulting in a four-dimensional reduced-order model. For the base excitation amplitude $\epsilon=1.5\times10^{-4}$, we found that the forced response curve converges well at $\mathcal{O}(5)$. The FRC is plotted in Fig.~\ref{fig:FRC-axial-moving-beam}, where we also observe agreement between the results of SSM reduction and collocation method applied to the full system using the \texttt{po} toolbox in \textsc{coco}~\cite{dankowicz2013recipes}. To explore the effects of nonlinear damping, we also calculate the FRC for system~\eqref{eq:eom-movingbeam-ode} with the nonlinear damping ignored. We observe that the nonlinear damping effects become significant as the response amplitudes increase.}

\begin{figure}[!ht]
\centering
\includegraphics[width=0.45\textwidth]{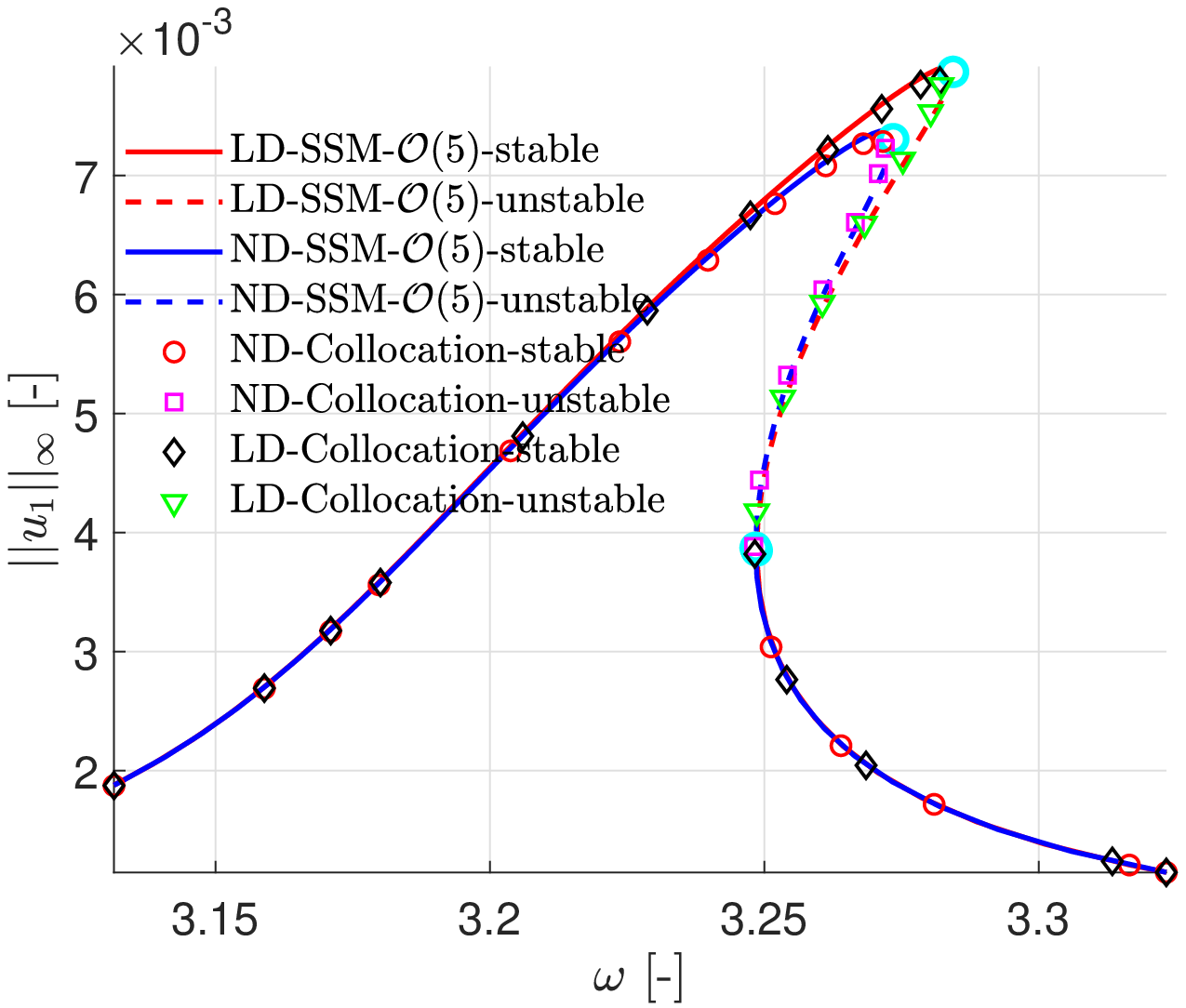}\\
\includegraphics[width=0.45\textwidth]{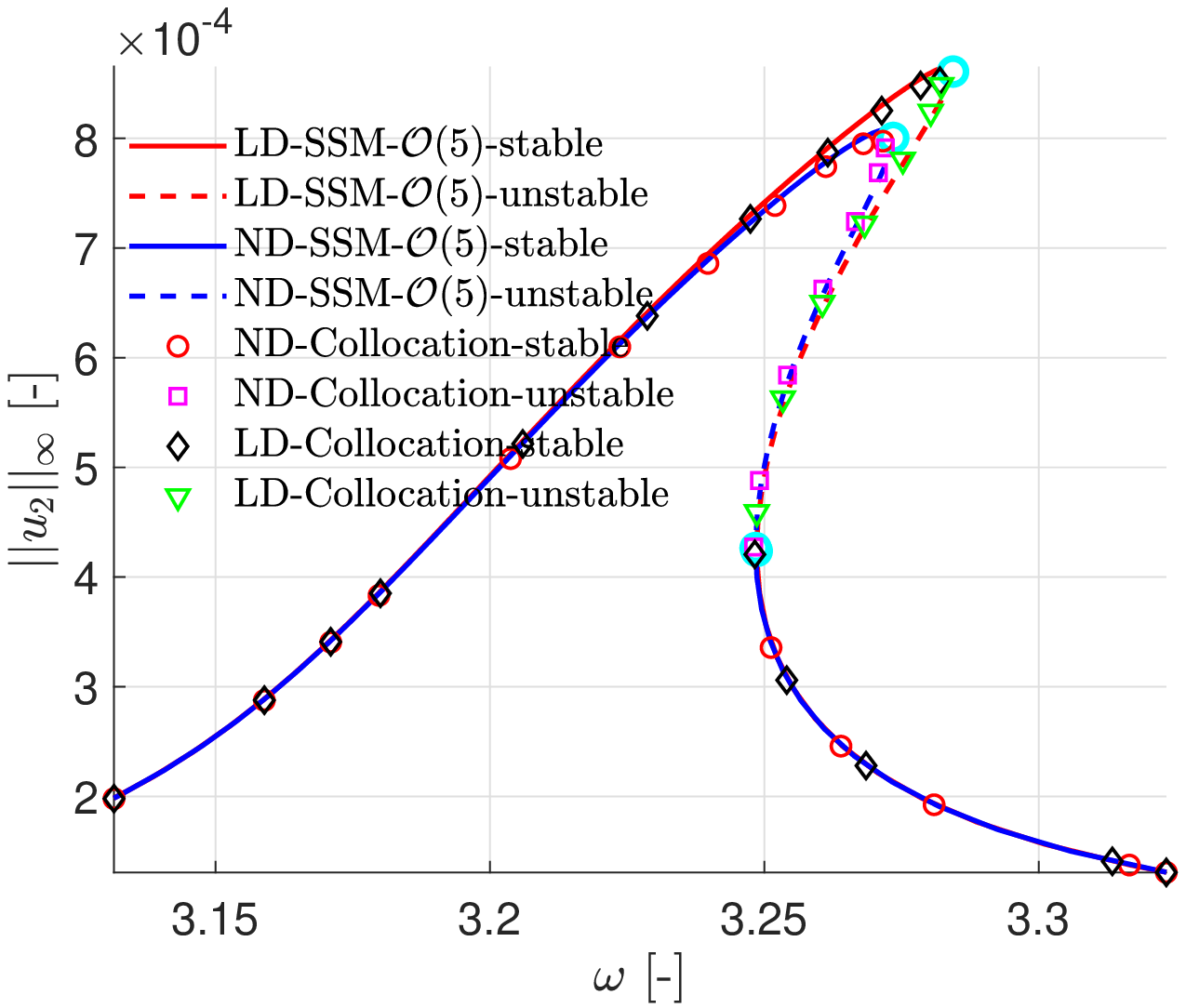}
\caption{\textcolor{black}{The FRC for the axially moving beam~\eqref{eq:eom-movingbeam-ode} in modal coordinates $(u_1,u_2)$ for $\omega\approx\omega_1=3.1954$. The `LD' and `ND' in the legend represent linear and nonlinear damping respectively. The results obtained by the continuation of periodic orbits with the collocation method agree with those obtain via SSM-based reduced-order models.}}
\label{fig:FRC-axial-moving-beam}
\end{figure}

\subsection{A von K\'arm\'an beam with support spring}
\label{sec:vonKarmanBeam}
\textcolor{black}{To demonstrate the computational efficiency of our SSM-based reduction procedure, we shift our focus to higher-dimensional finite element models. First, we} consider a clamped-pinned von K\'arm\'an beam with a support linear spring at its midspan, as shown in Fig.~\ref{fig:clamped_pinned_force}. This example is distinct from the \textcolor{black}{example~\ref{sec:prismatic_beam}} in the following aspects:
\begin{itemize}
\item A linear spring is attached at the midspan of the beam and the stiffness of the spring is tuned to trigger an exact 1:3 internal resonance, $\omega_2=3\omega_1$, such that the modal interaction in the primary resonance of the first mode is highlighted;
\item The beam structure is modeled using the von K\'arm\'an beam theory~\cite{reddy2015introduction} and hence both {axial} and {transverse} displacements are included as unknowns. Thus, the axial stretching effect is taken into account automatically.
\item \begin{sloppypar}The governing equation is discretized using the finite element method instead of a modal expansion. With an increasing number of elements, ranging from 8 to 10,000, we demonstrate the remarkable computational efficiency of SSM reduction relative to the harmonic balance method and collocation schemes applied to the full systems.\end{sloppypar}
\end{itemize}

\begin{figure}[!ht]
\centering
\includegraphics[width=0.45\textwidth]{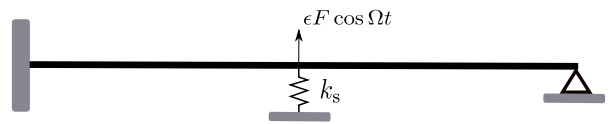}
\caption{A clamped-pinned von K\'arman beam with a spring support and a harmonic excitation at its midspan.}
\label{fig:clamped_pinned_force}
\end{figure}

\textcolor{black}{We set the width and height of the cross section to be $10\,\mathrm{mm}$ and the length of the beam to be $2700\,\mathrm{mm}$. Material properties are specified with the density $ 1780\times10^{-9}\,\mathrm{kg/{mm}^3}$ and the Young's modulus $45\times10^6\,\mathrm{kPa}$.} Following a finite element discretization, three DOF are introduced at each node: the axial and transverse displacements, and the rotation angle. The equations of motion of the discrete model can be written as
\begin{equation}
\boldsymbol{M}\ddot{\boldsymbol{x}}+\boldsymbol{C}\dot{\boldsymbol{x}}+\boldsymbol{K}\boldsymbol{x}+\boldsymbol{N}(\boldsymbol{x})=\epsilon\boldsymbol{f}\cos\Omega t
\end{equation}
where $\boldsymbol{x}\in\mathbb{R}^{3N_{\mathrm{e}}-2}$ is the assembly of all DOF, and $N_{\mathrm{e}}$ is the number of elements in the discretization. We use Rayleigh damping matrix of the form\textcolor{black}{~\eqref{eq:RayleighDamping}. In this example, we set $\alpha=0$ and $\beta=\frac{2}{9}\times10^{-4}\,\mathrm{s}^{-1}$ such that the system is weakly damped and from eq.~\eqref{eq:weakDampFreq}, we have $\lambda_{2i-1,2i}\approx \omega_i$  for $i\leq 2$}. More details about the formulation of $\boldsymbol{M}$, $\boldsymbol{K}$ and $\boldsymbol{N}$ can be found at~\cite{jain2018exact,FEcode}.

We first tune the stiffness of the support spring, $k_{\mathrm{s}}$, such that $\omega_2=3\omega_1$ holds and hence an 1:3 internal resonance occurs. As can be seen in Fig.~\ref{fig:freq1to3beam}, such an internal resonance arises at $k_\mathrm{s}\approx37\,\mathrm{kg/s^2}$. In the following computations, we set $k_\mathrm{s}=37\,\mathrm{kg/s^2}$ which gives $\omega_1=33.20\,\mathrm{rad/s}$ and $\omega_2=99.59\,\mathrm{rad/s}$.

\begin{figure}[!ht]
\centering
\includegraphics[width=0.4\textwidth]{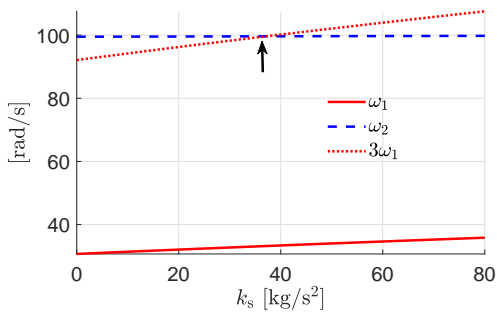}
\caption{Natural frequencies of the clamped-pinned beam with a support spring at its midspan, as functions of the stiffness of the support spring $k_\mathrm{s}$. At the intersection pointed by the arrow, $\omega_2= 3\omega_1$. The beam here is discretized with 100 elements resulting in 298 DOF. Numerical experiments suggest that the position of such \textcolor{black}{an} intersection is robust with respect to the number of elements used in the discretization.}
\label{fig:freq1to3beam}
\end{figure}

Now we consider the forced response of the discretized beam with a transverse load applied at its midspan. Let $F=1000\textcolor{black}{\,\mathrm{mN}}$ and $\epsilon=0.02$, we calculate the FRC for $\Omega$ over the interval $[0.96,1.05]\omega_1$ using SSM reduction, the harmonic balance method with \textsc{nlvib} tool~\cite{krack2019harmonic}, and the collocation method with the \texttt{po} toolbox of \textsc{coco}~\cite{dankowicz2013recipes}. These three methods will be applied to the same discretized beam with an increasing number of beam elements. Notably, when the number of elements is large enough, the mesh is artificially over-refined and the round-off errors are known to accumulate~\cite{liu2021balancing}. Indeed, when the number of elements is 30,000, the first natural frequency significantly deviates from the correct value. For this reason, here we set the upper bound for the number of elements to be 10,000, even though we could handle orders of magnitude more.

The following computations are all performed on a remote Intel Xeon E3-1585Lv5 processor (3.0-3.7 GHz) on the ETH Euler cluster. In the SSM reduction method, we take the first two pairs of complex conjugate modes as the master subspace to account for the 1:3 internal resonance, the same resonance considered in the previous example. This time, however, we use {polar} coordinates because we are interested in the primary resonance of the first mode for which no singularity occurs. It follows that the phase space for the full system is $6N_{\mathrm{e}}-4$ dimensional while the one for the reduced dynamical system is only four dimensional. \textcolor{black}{We observed that cubic approximation of SSM is not able to produce convergent FRC, as seen in Fig.~\ref{fig:FRCs-vonBeam-orders}. Instead, we use $\mathcal{O}(7)$ expansion in this example given the curve converges well at this order.} The \textsc{nlvib} tool and the \texttt{po} toolbox of \textsc{coco} are used to extract the FRC of the full system directly. We have carefully tuned the setting of \textsc{coco} such that the computational time of the collocation method using \texttt{po} is reasonable. Such tuning efforts include disabling some advanced feature of \texttt{po} and increasing maximal continuation step size. More details about the tuning are presented in Appendix~\ref{sec:apdix-coco}. As for the setting of \textsc{nlvib}, we set the number of harmonics to be 10 and the nominal step size to be 2. Note that stability analysis of periodic orbits is not provided in \textsc{nlvib}.

\begin{figure}[!ht]
\centering
\includegraphics[width=0.45\textwidth]{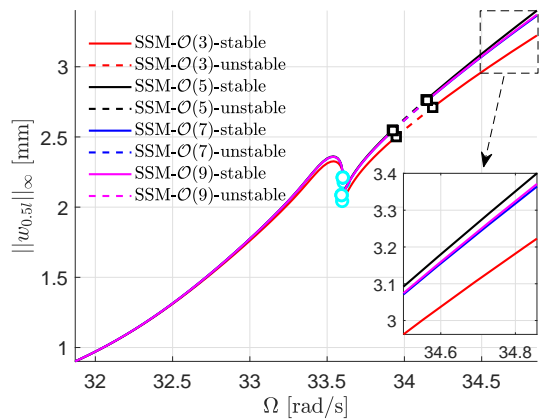}
\caption{\textcolor{black}{The FRCs in the amplitude of transverse displacement at the midspan of the clamped-pinned von K\'arm\'an beam discretized with 8 elements. These FRCs are obtained using SSM computations at different orders.}}
\label{fig:FRCs-vonBeam-orders}
\end{figure}

The computational times of FRC using the three methods with various number of elements are summarized in Fig.~\ref{fig:time_comparison_v4}. In the case of 40 elements, the system has 118 DOF, giving a 236-dimensional phase space. The computation times of FRC using SSM reduction, the harmonic balance method with \textsc{nlvib}, and the collocation method with \textsc{coco} are 14 seconds, 12.5 hours, and 58.5 hours, respectively. Therefore, the SSM reduction produces a significant speed-up gain relative to the other two methods applied to the full system. When the number of elements is further increased, the FRC computations with the harmonic balance method and the collocation method were no longer feasible. On the other hand, the SSM reduction only took about one hour to obtain the FRC in the case of 10,000 elements with 29,998 DOF.

\begin{figure}[!ht]
\centering
\includegraphics[width=0.48\textwidth]{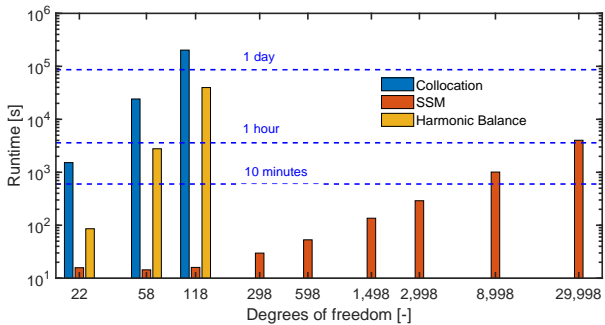}
\caption{Computational times for the FRC of the clamped-pinned von K\'arm\'an beam discretized with different number of elements. The number of DOF is given by $3N_{\mathrm{e}}-2$ when the beam is discretized with $N_{\mathrm{e}}$ elements. Here we have $N_{\mathrm{e}}\in\{$8, 20, 40, 100, 200, 500, 1,000, 3,000, 10,000$\}$. The upper bound of $N_{\mathrm{e}}$ is set to be 10,000 to avoid the accumulation of round-off errors induced by over-refined meshes.}
\label{fig:time_comparison_v4}
\end{figure}

The FRC obtained for the transverse vibration at the midspan and $1/4$ of the beam are plotted in Fig.~\ref{fig:FRCs-vonBeam-physics-v4}. The results obtained by the above three methods match well in the case of 8, 20 and 40 elements. We also use numerical integration to validate the results obtained by SSM reduction for the beam discretized with larger number of elements, where the harmonic balance method and the collocation method become impractical. Specifically, Newmark-beta integration is applied to the full systems and the responses at the \textcolor{black}{Poincar\'e} section $\{t:\mathrm{mod}(t,T)=0\}$, namely, $\boldsymbol{z}(0),\boldsymbol{z}(T),\boldsymbol{z}(2T),\cdots$ are recorded, where $T={2\pi}/{\Omega}$ is the period of harmonic excitation. The numerical integration terminates once the following periodicity condition is satisfied:
\begin{equation}
\frac{||\boldsymbol{z}(iT)-\boldsymbol{z}((i-1)T)||}{||\boldsymbol{z}((i-1)T)||}<\mathrm{Tol}.
\end{equation}
In this paper, we set $\mathrm{Tol}=0.001$. To speed up the convergence to steady state in numerical integration, a point on the trajectory obtained by SSM reduction has been chosen as $\boldsymbol{z}(0)$. As can be seen in the last two panels of Fig.~\ref{fig:FRCs-vonBeam-physics-v4}, the results obtained by SSM reduction match well with the ones from direct numerical integration. Results for $N_{\mathrm{e}}\in$$\{$500, 1,000, 3,000, 10,000$\}$ are not plotted here because the results at $N_{\mathrm{e}}=200$ already converge with respect to the increment of the number of elements.

\begin{figure*}[!ht]
\centering
\includegraphics[width=3.0in]{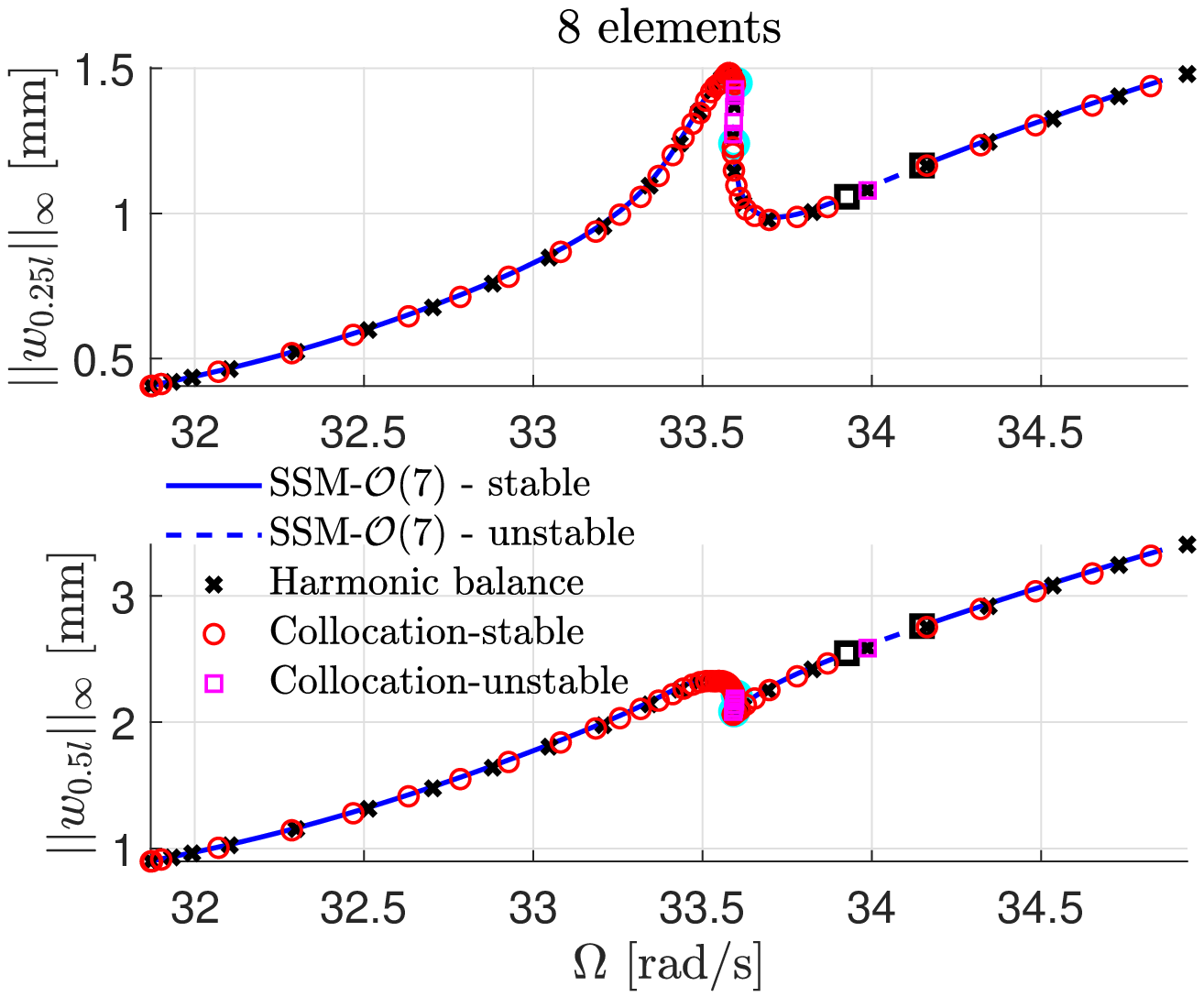}
\includegraphics[width=3.0in]{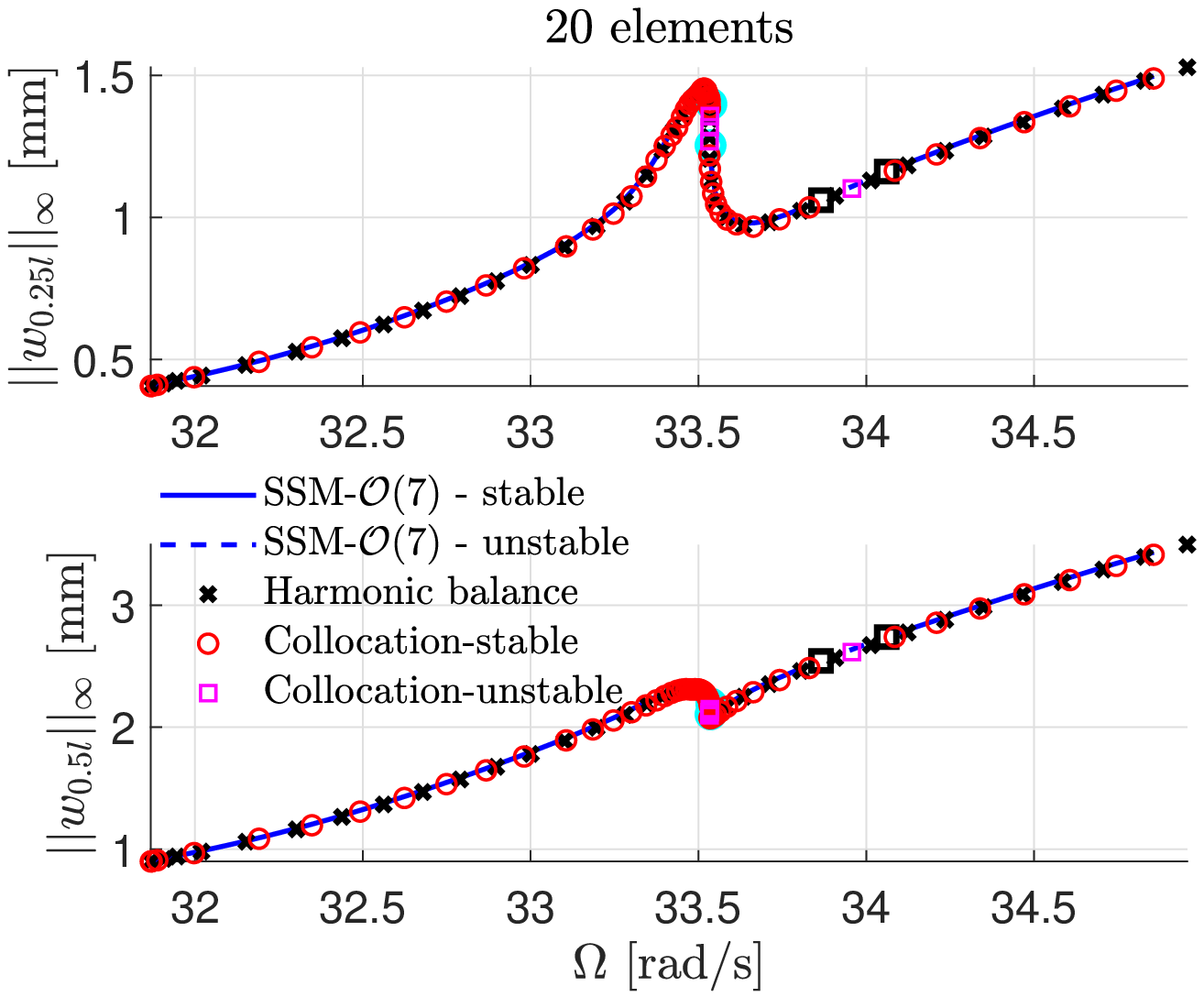}\\
\includegraphics[width=3.0in]{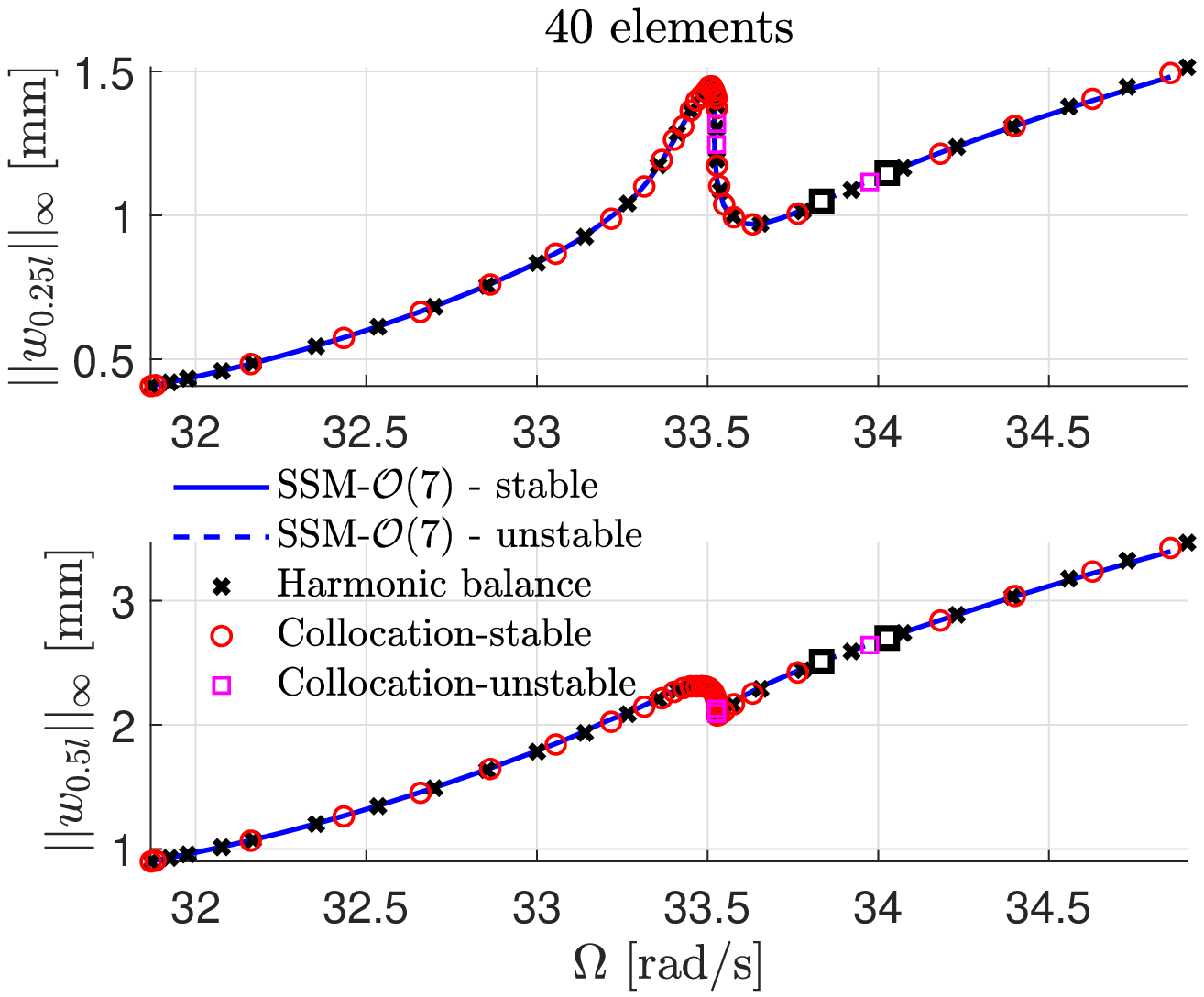}
\includegraphics[width=3.0in]{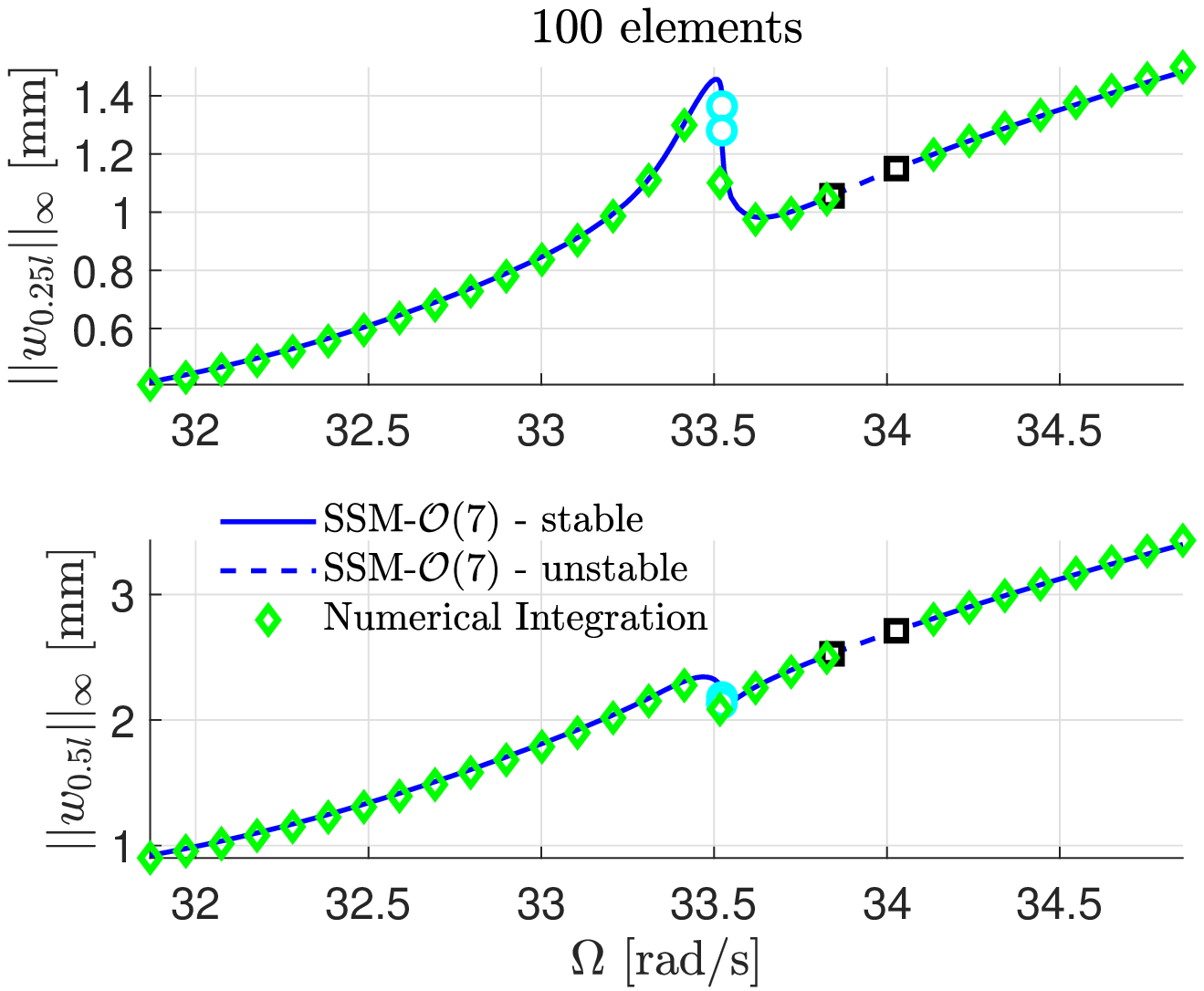}\\
\includegraphics[width=3.0in]{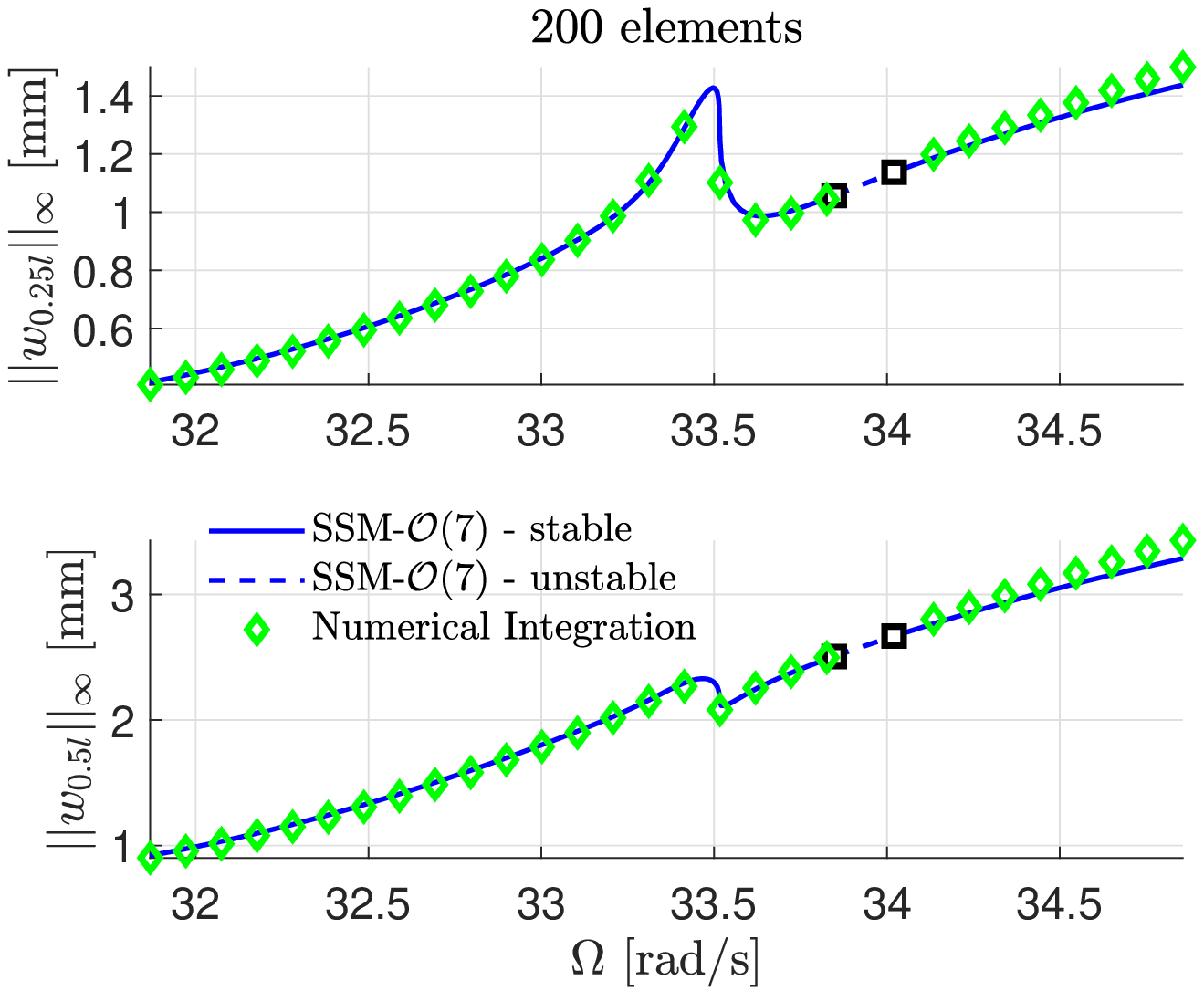}
\caption{The FRC in physical coordinates (the amplitude of transverse displacement $w$ at $0.25l$ and $0.5l$) of the clamped-pinned von K\'arm\'an beam discretized with different numbers of elements.}
\label{fig:FRCs-vonBeam-physics-v4}
\end{figure*}

Energy transfer due to modal interaction is observed in the FRCs discussed above. In particular, when the transverse vibration amplitude at 1/4 of the beam's length arrives its peak around $\Omega=\omega_1$, the transverse vibration amplitude at the midspan drops, as seen in Fig.~\ref{fig:FRCs-vonBeam-physics-v4}. This phenomenon results from the energy transfer between the first and the second bending modes due to the 1:3 internal resonance. Indeed, as can be seen Fig.~\ref{fig:FRCs-vonBeam-modal-v4}, the amplitude of the second mode $\rho_2$ has a peak at $\Omega\approx\omega_1$. In other words, the vibration amplitude of the \emph{second} mode approaches a maximum when $\Omega$ is around the natural frequency of the \emph{first} mode. Meanwhile, the amplitude of the first mode $\rho_1$ drops slightly when $\rho_2$ approaches its maximum. Therefore, the energy of the first mode is transferred to the second mode due to the internal resonance. From the mode shapes of the first and second modes, one can infer that the transverse vibrations at the mid span and at the 1/4 of the beam are representatives of the vibration of the first and the second modes, respectively. Therefore, the FRC of $||w_{0.25l}||_{\infty}$ and $||w_{0.5l}||_{\infty}$ are qualitatively similar to that of $\rho_2$ and $\rho_1$, respectively.

\begin{figure}[!ht]
\centering
\includegraphics[width=0.45\textwidth]{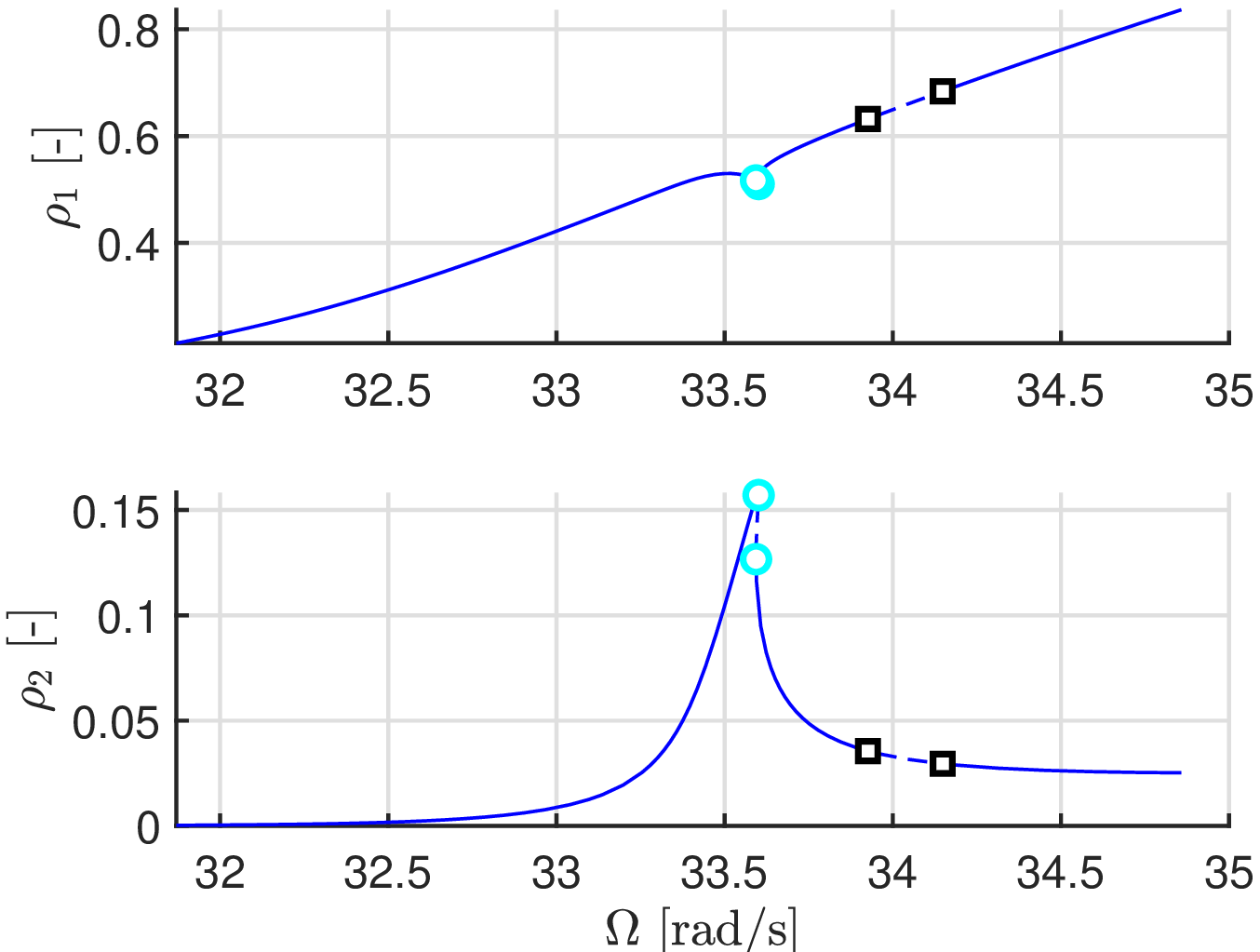}
\caption{FRC in $(\rho_1,\rho_2)$ of the clamped-pinned von K\'arm\'an beam discretized with 8 elements. The corresponding FRC in $||w_{0.25l}||_{\infty}$ and $||w_{0.5l}||_{\infty}$ is presented in the first panel of Fig.~\ref{fig:FRCs-vonBeam-physics-v4}.}
\label{fig:FRCs-vonBeam-modal-v4}
\end{figure}

\textcolor{black}{\subsection{A Timoshenko beam carrying a lumped mass}}
\textcolor{black}{In this section, we consider a finite element model of a geometrically nonlinear Timoshenko beam with an attached mass, as shown in Fig.~\ref{fig:clamped-disk}, to demonstrate the capability of SSM reduction for systems undergoing large deformations.}

\begin{figure}[!ht]
\centering
\includegraphics[width=0.4\textwidth]{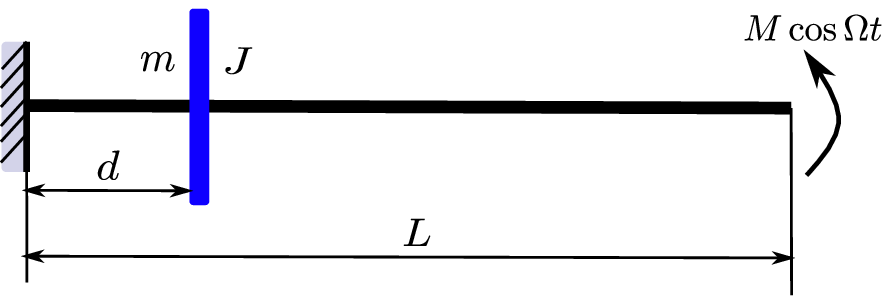}
\caption{\textcolor{black}{The schematic of a cantilever beam carrying a lumped mass $m$ with mass moment of inertia $J$. The beam is subject to an external harmonic moment at the free end.}}
\label{fig:clamped-disk}
\end{figure}

\textcolor{black}{The beam model here is the same as the one in section 7 of~\cite{ponsioen2020model}. The length, width and height of the beam are 1200 mm, 40 mm and 40 mm, respectively. We choose the following values for the material parameters: Density is $7850\,\mathrm{kg/mm^3}$, Young’s modulus is 90 MPa, shear modulus is 34.6 MPa, axial material damping constant is 13.4 Pa-s, and shear material damping constant is 8.3 Pa-s. Inspired by~\cite{zavodney1989non}, we add a lumped mass $m$ with mass moment of inertia $J$ at a position $d$ (cf.~Fig.~\ref{fig:clamped-disk}) from the fixed end and choose appropriate values of $m,J$ and $d$ to introduce internal resonances.}

\textcolor{black}{The cantilever beam is discretized in the same way as the one in section 7 of~\cite{ponsioen2020model}, resulting in a 21 degrees of freedom system (also see Example 7.3 in~\cite{ponsioen2018automated}). For the lumped mass, we choose $d=300\,\mathrm{mm}$, $m=80\,\mathrm{kg}$ and $J=5\times10^6\,\mathrm{kg\cdot mm^3}$, which results in a near 1:3 internal resonance among the first two natural frequencies of the discretized system as $\omega_1=2.2562\,\mathrm{rad/s}$ and $\omega_2=7.2301\,\mathrm{rad/s}\approx 3\omega_1$.}

\textcolor{black}{We apply a harmonic external moment $M\cos\Omega t$ at the free end of the beam and calculate the forced response curve of the system for $\Omega\approx\omega_1$. In particular, we are interested in the vibration amplitude of the transverse deflection of the beam at the free end. Since the system has near 1:3 internal resonance, we again take the first two pairs of complex conjugate modes as the master subspace in SSM reduction, reducing the dimension of phase space from 42 (of the full system) to four.}

\textcolor{black}{We set the moment amplitude $M=0.84\,\mathrm{N\cdot m}$ to obtain the FRCs in the frequency range $\Omega\in[2.1,2.7]\,\mathrm{rad/s}$ via SSM reduction at various orders, as shown in Fig.~\ref{fig:Timoshenko_beam_FRC_orders}. We observe that the FRC converges well at $\mathcal{O}(9)$ expansion. Remarkably, the peak vibration amplitude of the FRC reaches 415 mm, which is more than 10 times of the thickness of the beam and more than one third of the length of the beam. It is not surprising that we need a high-order expansion of SSM to capture such a large deformation. In contrast, for smaller excitation amplitude $M=0.24\,\mathrm{N\cdot m}$, we found that the peak response amplitude of the FRC over the same frequency interval is reduced to 283 mm and $\mathcal{O}(5)$ expansion of SSM is already able to produce converged FRC, as seen in Fig.~\ref{fig:Timoshenko_beam_FRC_orders}.}

\begin{figure}[!ht]
\centering
\includegraphics[width=0.45\textwidth]{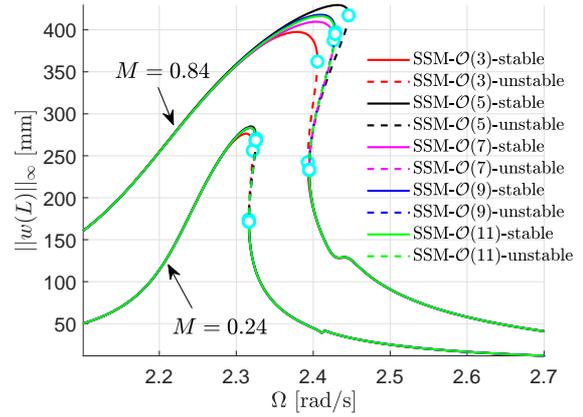}
\caption{\textcolor{black}{The FRCs in the amplitude of deflection at the free end for Timoshenko cantilever beam carrying a lumped mass. Two families of FRCs corresponding to moment amplitude $M=0.84\,\mathrm{N\cdot m}$ and $M=0.24\,\mathrm{N\cdot m}$ are obtained using SSM computations at different orders.}}
\label{fig:Timoshenko_beam_FRC_orders}
\end{figure}

\textcolor{black}{Similarly to the previous example, we also use the collocation method with the \texttt{po} toolbox of \textsc{coco} and the harmonic balance method with \textsc{nlvib} tool to extract the FRC of the full system to validate the results obtained from SSM reduction. The setting of algorithm parameters of \textsc{coco} are given in Appendix~\ref{sec:apdix-coco}. As seen in Fig.~\ref{fig:FRC-Timoshenko}, the results from SSM reduction match closely with the reference solution from \texttt{po} (labelled as Collocation). In the large amplitude response of $M=0.84\,\mathrm{N\cdot m}$, we notice small discrepancies relative to the full solution. As these discrepancies are not observed for the lower excitation amplitude of $M=0.24\,\mathrm{N\cdot m}$, they may be attributed to our restriction of the non-autonomous part of the SSM to its leading-order approximation~\eqref{eq:leading-nonauto}.}

\begin{figure}[!ht]
\centering
\includegraphics[width=0.45\textwidth]{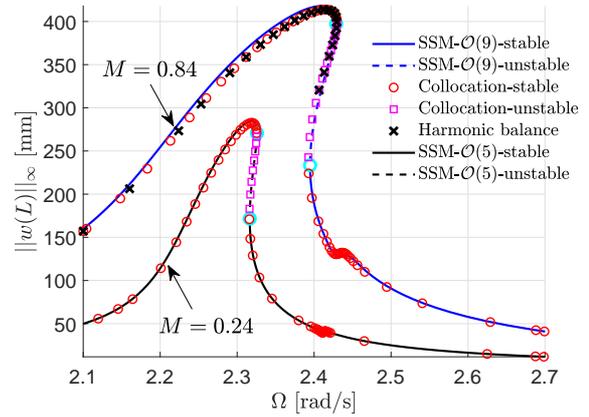}
\caption{\textcolor{black}{The FRCs in the amplitude of deflection at the free end for Timoshenko cantilever beam carrying a lumped mass subject to a harmonic moment $M\cos\Omega t$ at the free end. Here the continuation of haromnic balance method with \textsc{nlvib} for $M=0.84\,\mathrm{N\cdot m}$ terminates after its runtime reaches the one-day time-threshold.}}
\label{fig:FRC-Timoshenko}
\end{figure}

\textcolor{black}{All computations of this example are performed on a remote Intel Xeon E3-1585Lv5 processor (3.0-3.7 GHz) on the ETH Euler cluster. The computation times of FRC for $M=0.84$ using SSM reduction at $\mathcal{O}(9)$ and the collocation method with \textsc{coco} are 29 seconds and 3.8 hours, respectively. When we set the time-threshold of the harmonic balance method with \textsc{nlvib} to be one day, the number of harmonics to be 10 and the nominal step size to be 20, the continuation run in \textsc{nlvib} was not able to cover the full FRC (see Fig.~\ref{fig:FRC-Timoshenko}) because the adaptation of continuation step sizes in \textsc{nlvib} does not work well. Thus, the SSM reduction again produces a significant speed-up relative to the other two methods applied to the full system.}

\textcolor{black}{One can find a small bump at $\Omega\approx2.44$ in the FRC for $M=0.84$ shown in Fig.~\ref{fig:FRC-Timoshenko}. This bump results from the modal interaction between the first and the second bending modes. As seen in Fig.~\ref{fig:Timoshenko_beam_FRC_modal}, two peaks are observed in the FRC for the second mode $\rho_2$ because of the internal resonance. The second peak results in the small bump in the FRC shown in Fig.~\ref{fig:FRC-Timoshenko}.}

\begin{figure}[!ht]
\centering
\includegraphics[width=0.45\textwidth]{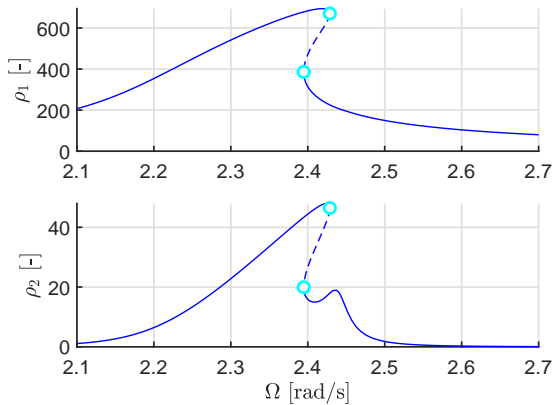}
\caption{\textcolor{black}{FRC in $(\rho_1,\rho_2)$ of the Timoshenko cantilever beam carrying a lumped mass subject to a harmonic moment $M\cos\Omega t$ at the free end with $M=0.84$. The corresponding FRC in the deflection at the free end is presented in Fig.~\ref{fig:FRC-Timoshenko}. The second peak near $\Omega=2.44$ for the FRC in $\rho_2$ explains the small bump for $\Omega\approx2.44$ in the FRC shown in Fig.~\ref{fig:FRC-Timoshenko}}}
\label{fig:Timoshenko_beam_FRC_modal}
\end{figure}

\subsection{A simply supported von K\'arm\'an plate}
\label{sec:vonKarmanPlate}
\begin{sloppypar}
We now consider a two-dimensional structure to demonstrate the effectiveness of SSM reduction in the case of high-dimensional systems. Specifically, we study the forced vibration of a simply supported plate (see the first panel of Fig.~\ref{fig:plate_mesh}). Let the length, width and thickness of this plate be $a$, $b$ and $h$, it follows from {classical linear} plate theory that its natural frequency is given by~\cite{geradin2014mechanical}
\begin{equation}
\omega_{(i,j)}=\left(\frac{i^2}{a^2}+\frac{j^2}{b^2}\right)\pi^2\sqrt{\frac{D}{\rho h}},
\end{equation}
where $i,j$ are positive integers, $\rho$ and $D$ are the density and bending stiffness of the plate, respectively. $D$ is given as follows
\begin{equation}
D = \frac{Eh^3}{12(1-\nu^2)},
\end{equation}
where $E$ and $\nu$ are Young's modulus and Poisson's ratio, respectively. In the case of {square} plate, we have $a=b=l$ and
\begin{equation}
\label{eq:exactFreqPlate}
\omega_{(1,2)}=\omega_{(2,1)}=\frac{5\pi^2}{l^2}\sqrt{\frac{D}{\rho h}}.
\end{equation}
We conclude that there exists 1:1 internal resonance between the second and third bending modes of the simply supported square plate.
\end{sloppypar}

\begin{figure}[!ht]
\centering
\includegraphics[width=0.4\textwidth]{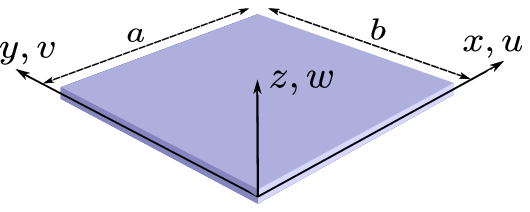}
\includegraphics[width=0.4\textwidth]{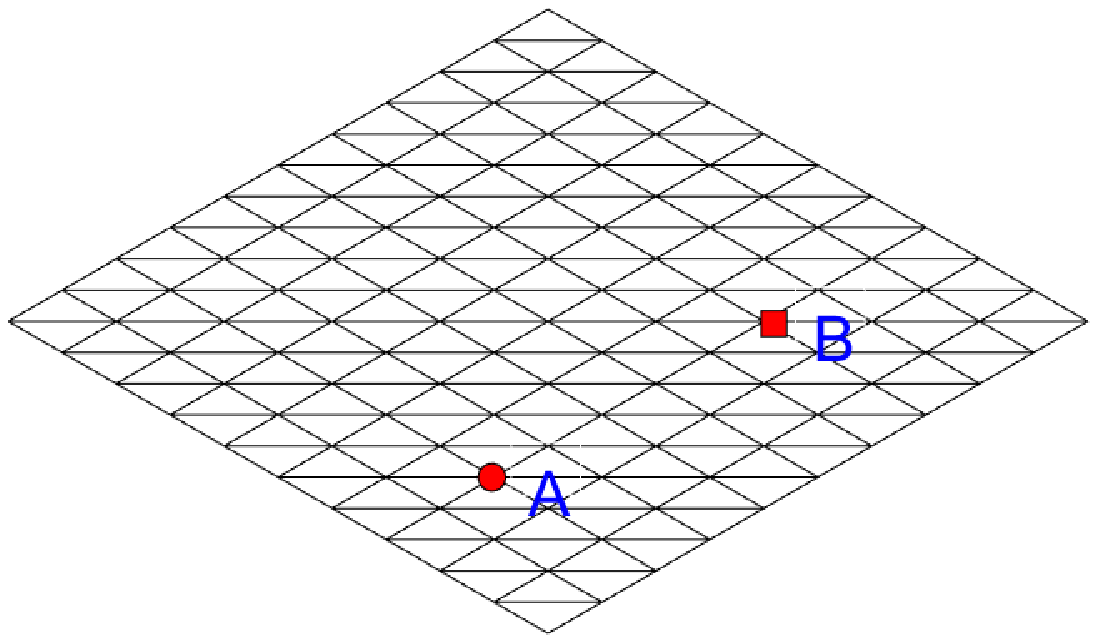}
\caption{A simply supported rectangular plate and a mesh for a square plate ($a=b=l$).}
\label{fig:plate_mesh}
\end{figure}

The square plate studied here is modeled using the von K\'arm\'an theory\textcolor{black}{, where both the \emph{in-plane} displacements $(u,v)$ and the \emph{out-of-plane} displacement $w$ are modeled as unknowns in the governing equations and the nonlinear strain due to large transverse deformation is considered.} The reader may refer to~\cite{reddy2015introduction} for the nonlinear governing equation of the plate. 

We apply the finite element method to discretize the governing equation. Triangular elements are used to perform such a discretization following the paradigm presented in the second panel of Fig.~\ref{fig:plate_mesh}. With the length of the plate uniformly divided into $n_{\mathrm{p}}$ subintervals, the number of elements and the number of DOF of the discretized plate are given by
\begin{equation}
N_{\mathrm{e}}=2n_{\mathrm{p}}^2,\quad n =6(n_{\mathrm{p}}^2+1)=3N_{\mathrm{e}}+6.
\end{equation}
For the mesh in Fig.~\ref{fig:plate_mesh}, we have $n_{\mathrm{p}}=10$, $N_{\mathrm{e}}=200$ and $n =606$. We use flat facet shell finite elements to discretize the displacement field~\cite{allman1976simple,allman1996implementation}. This is a plate element but can be used to model shell structures with small curvature. Each node in the element has six DOF, namely, $(u,v,w,w_x,w_y,u_y-v_x)$.
The reader may refer to~\cite{allman1976simple,allman1996implementation,FEcode} for the derivation of the mass and stiffness matrcies, and the coefficients of nonlinear internal forces. We also use Rayleigh damping in this example (cf.~\eqref{eq:RayleighDamping}).

\begin{sloppypar}
In following computations, we set $l=1\,\textrm{m}$, $h=0.01\,\textrm{m}$, $E=70\times 10^9$ Pa, $\nu=0.33$ and $\rho=2700\,\mathrm{kg}/\textrm{m}^3$. With the mesh in Fig.~\ref{fig:plate_mesh}, the natural frequencies of the discrete undamped linear plate are computed and compared with the analytical solutions to validate the correctness of $\boldsymbol{M}$ and $\boldsymbol{K}$ of the finite element model. It follows from~\eqref{eq:exactFreqPlate} that
\begin{equation}
\omega_{(1,2)}=\omega_{(2,1)}=768.4\,\mathrm{rad/s}.
\end{equation}
Meanwhile, the computation of natural frequencies using the finite element model gives
\begin{equation}
\omega_2 = 763.6\,\mathrm{rad/s},\quad\omega_3=767.7\,\mathrm{rad/s},
\end{equation} 
which are close to the reference solutions: their relative errors are $0.62\%$ and $0.09\%$, respectively. The vibration modal shapes of these two modes are plotted in Fig.~\ref{fig:plate_mode}. Given the mesh breaks symmetry between the two modes, the obtained $\omega_2$ is not exactly as $\omega_3$. This discrepancy will become smaller when the mesh size decreases. \textcolor{black}{Once again, we choose Rayleigh damping (see eq.~\eqref{eq:RayleighDamping}) with $\alpha=1$ and $\beta=4\times10^{-6}$ such that the eigenvalues of the damped linear plate are approximated} according to~\eqref{eq:weakDampFreq} and we have
\begin{equation}
\lambda_3=-1.7 +\mathrm{i}763.6\approx\mathrm{i}\omega_2, \lambda_5=-1.7 +\mathrm{i}767.7\approx\mathrm{i}\omega_3.
\end{equation}
We also considered a static nonlinear problem to further validate the correctness of nonlinear force $\boldsymbol{N}(\boldsymbol{x})$ of the finite element model. Specifically, we have studied Example 7.9.3 in~\cite{reddy2015introduction} using our finite element model. In the example, the transverse displacement $w$ of a simply supported square plate under uniformly distributed transverse load is calculated. We have solved the same problem and our results match well with the reference results in~\cite{reddy2015introduction}.
\end{sloppypar}

\begin{figure}[!ht]
\centering
\includegraphics[width=0.45\textwidth]{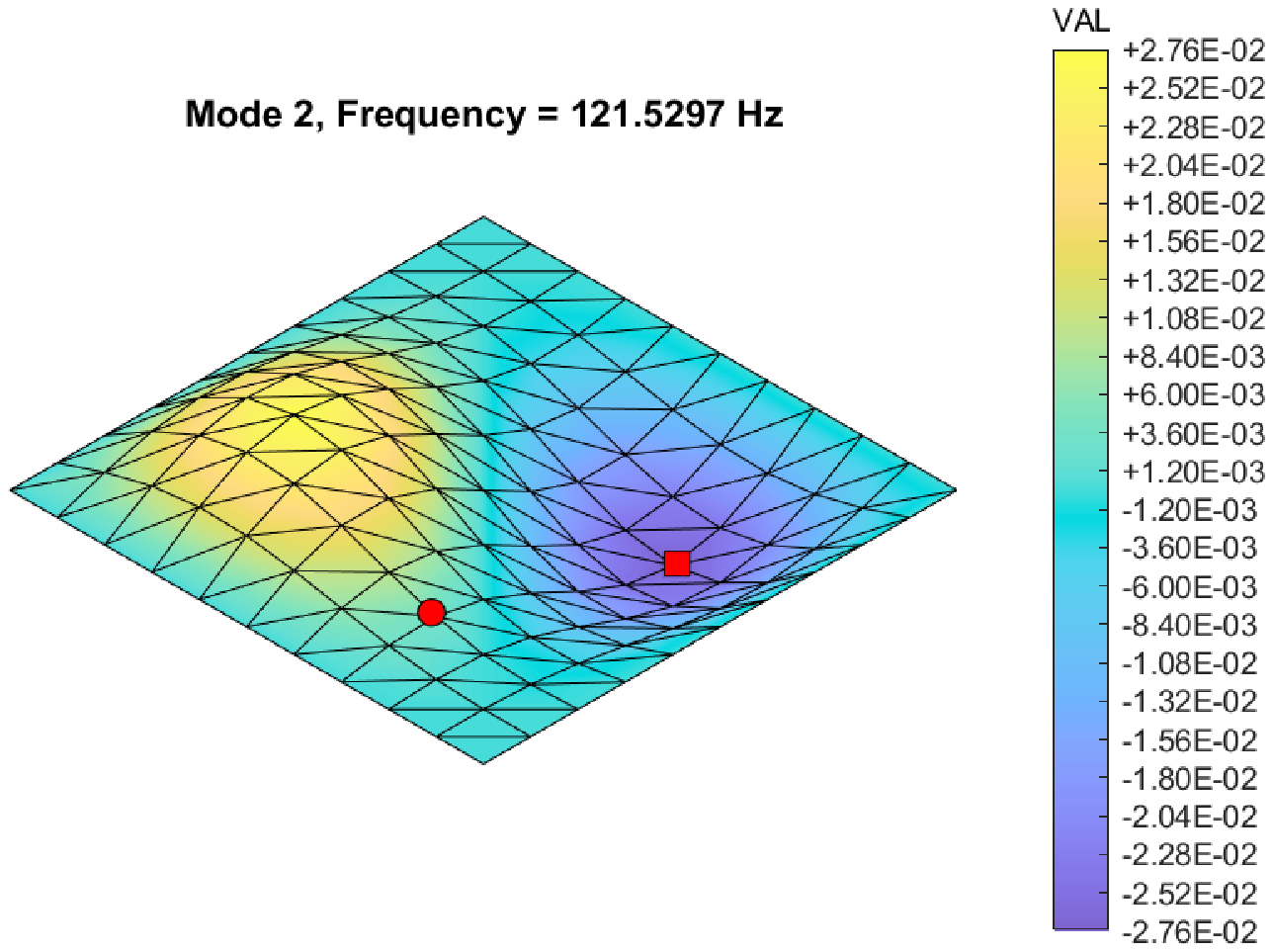}
\includegraphics[width=0.45\textwidth]{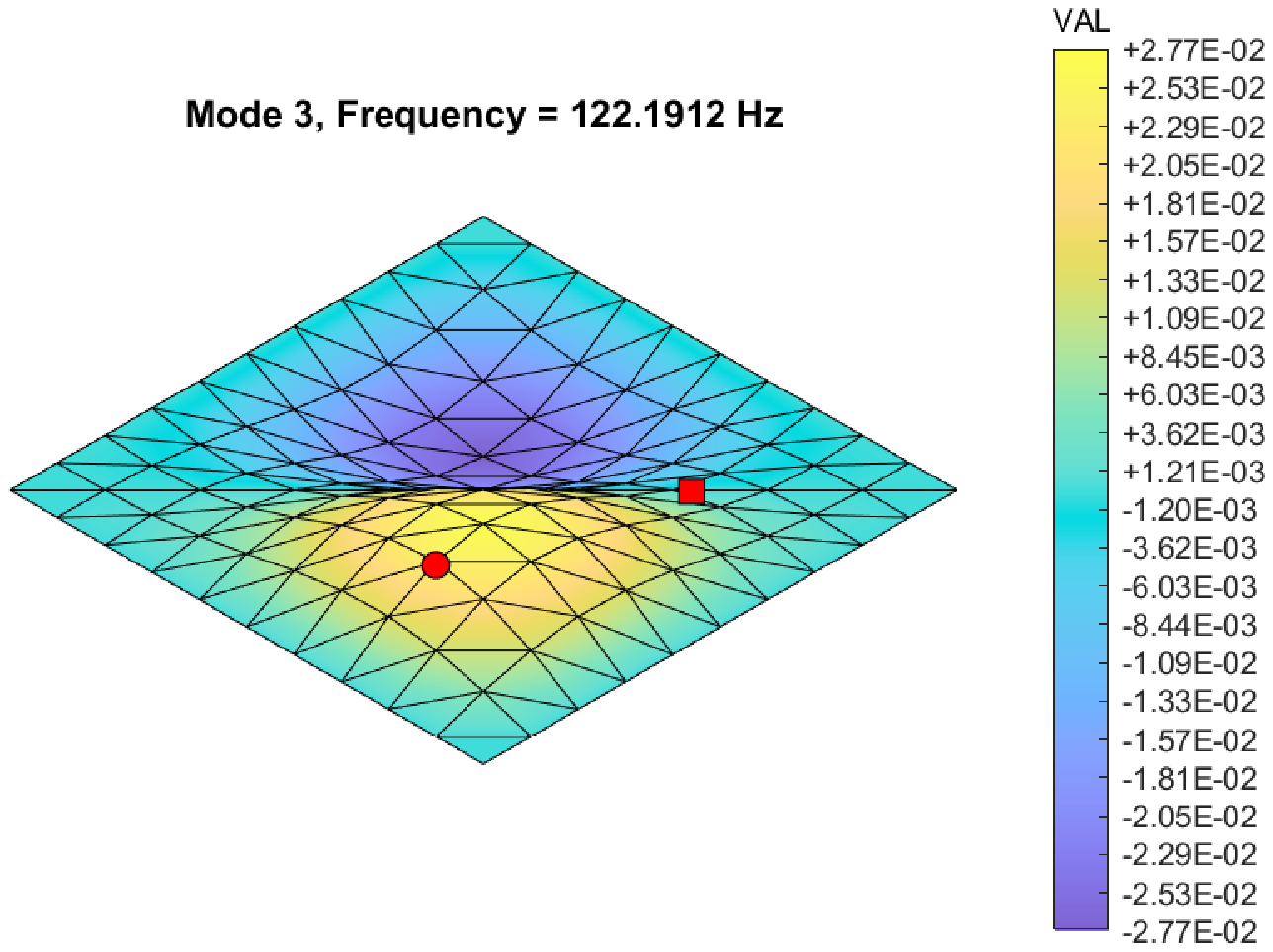}
\caption{Mode shapes of the second and third linear bending modes of the simply supported square plate.}
\label{fig:plate_mode}
\end{figure}

We seek to determine the FRC of this plate subject to a concentric load $50\cos\Omega t$ at point A with coordinate $(0.2l,0.3l)$ (cf.~Fig.~\ref{fig:plate_mesh}). It follows from the mode shapes of the plate (see Fig.~\ref{fig:plate_mode}) that point A is close to the nodal line of the second mode and then the modal force for the third mode is larger than that of the second mode. Here we choose the two pairs of complex conjugate modes corresponding to the second and third bending modes as the master spectral subspace to account for the 1:1 internal resonance. We again use {polar} coordinate representation because both modes are activated. The computation of the FRC in this example was performed on a remote node on the ETH Euler cluster with two Intel Xeon Gold 6150 processors (2.7-3.7 GHz).

\textcolor{black}{As seen in Fig.~\ref{fig:plate_FRC_A_orders}, the FRC under the concentrated load of $50\cos\Omega t$ converges well at $\mathcal{O}(5)$ expansion of the SSM. The peak vibration amplitude of the FRC is 2.4 mm. We generally observe that higher-order expansions of the SSM are required to accurately approximate larger response amplitudes. Indeed, when the load amplitude is doubled, numerical experiments show that the peak amplitude reaches 5.1 mm at the coordinate $(0.3l,0.3l)$, and an $\mathcal{O}(11)$-expansion is needed to yield a converged FRC.  Furthermore, upon tripling the forcing amplitude, we observe that the FRC obtained by SSM reduction does not converge. This observation is in agreement with the SSM theory which is applicable for limited forcing amplitudes. In the rest of this example, we study the original forcing case of $50\cos\Omega t$ using an $\mathcal{O}(5)$ SSM reduction.}

\begin{figure}[!ht]
\centering
\includegraphics[width=0.45\textwidth]{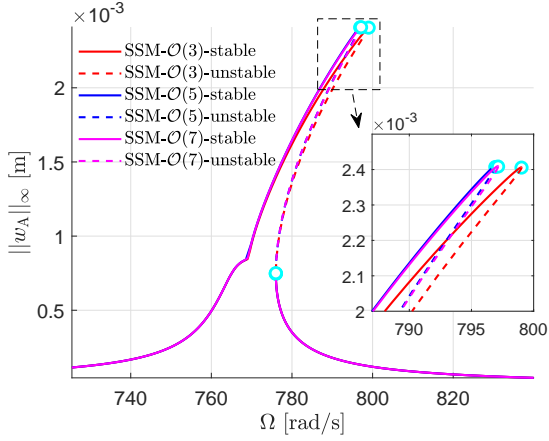}
\caption{\textcolor{black}{The FRCs in the amplitude of deflection at point A for von K\'arm\'an plate discretized with 200 elements and 606 DOF. These FRCs are obtained using SSM computations at different orders.}}
\label{fig:plate_FRC_A_orders}
\end{figure}

The FRC obtained by SSM reduction is plotted in Fig.~\ref{fig:plate_FRC_physical}, where the upper and lower panels present the amplitudes of transverse vibration at node A and B, respectively (cf.~Fig.~\ref{fig:plate_mesh}). 
To validate the effectiveness of SSM reduction, one may apply the collocation method or harmonic balance technique to the full system as we did in the previous example. However, these two methods are impractical due to the high dimensionality of the problem. For the same mechanical system with 606 DOF, trial \textcolor{black}{computational} experiments show that the harmonic balance method with \textsc{nlvib} performed only one continuation step and the collocation method with \textsc{coco} performed only four continuation steps in ten days of computational time. We consider an alternative method, namely, the shooting method combined with parameter continuation (cf.~\cite{peeters2009nonlinearII}), to extract the FRC of the full nonlinear system. In particular, the computation was performed using a \textsc{coco}-based shooting toolbox~\cite{coco-shoot} with the Newmark integrator and the atlas algorithm of \textsc{coco}. With 1,000 integration steps per excitation period and a maximum continuation step size $h_{\max}=50$, we obtain the FRC of full system. As can be seen in the figure, the results of the two techniques match closely.

\begin{figure}[!ht]
\centering
\includegraphics[width=3.0in]{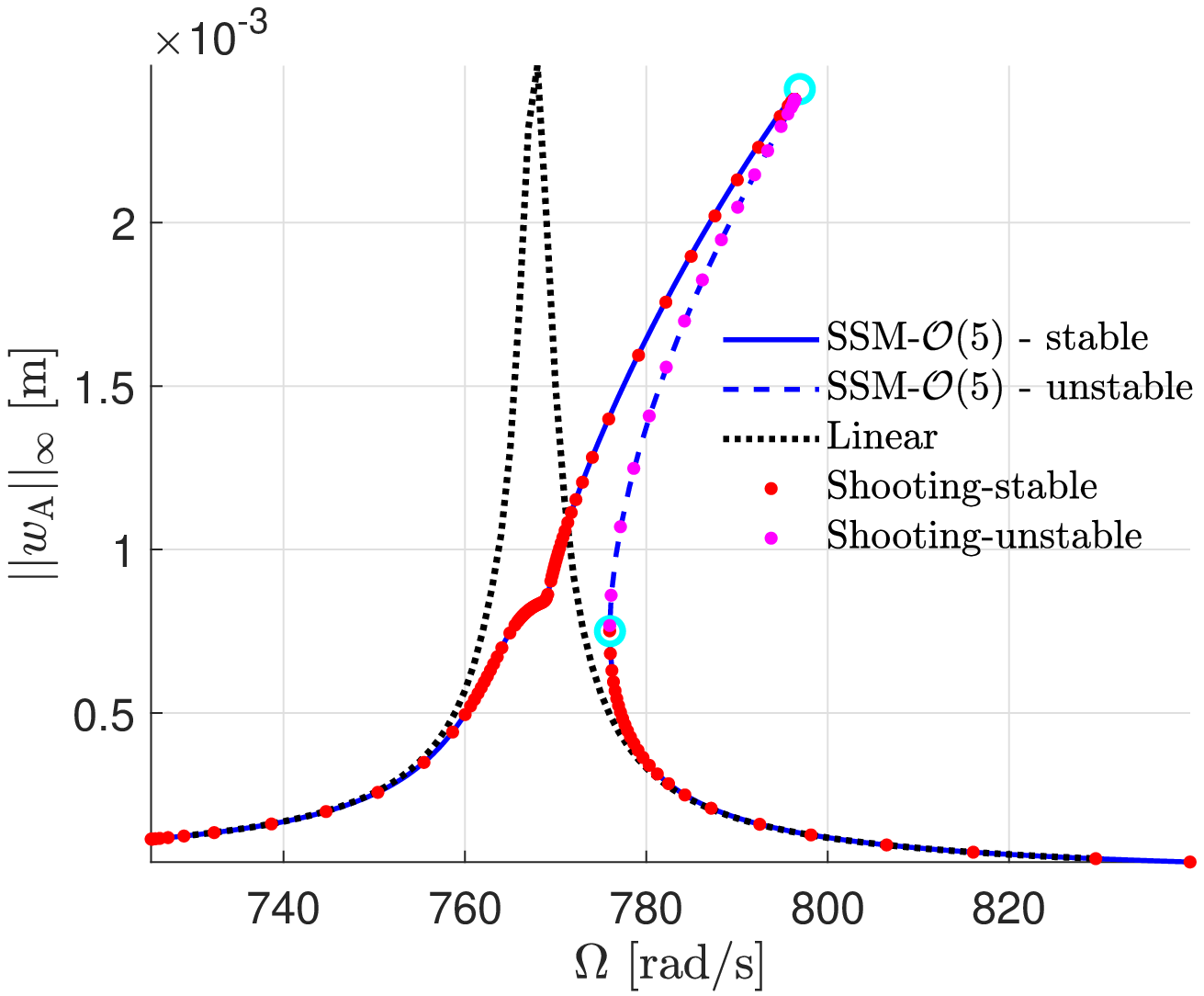}
\includegraphics[width=3.0in]{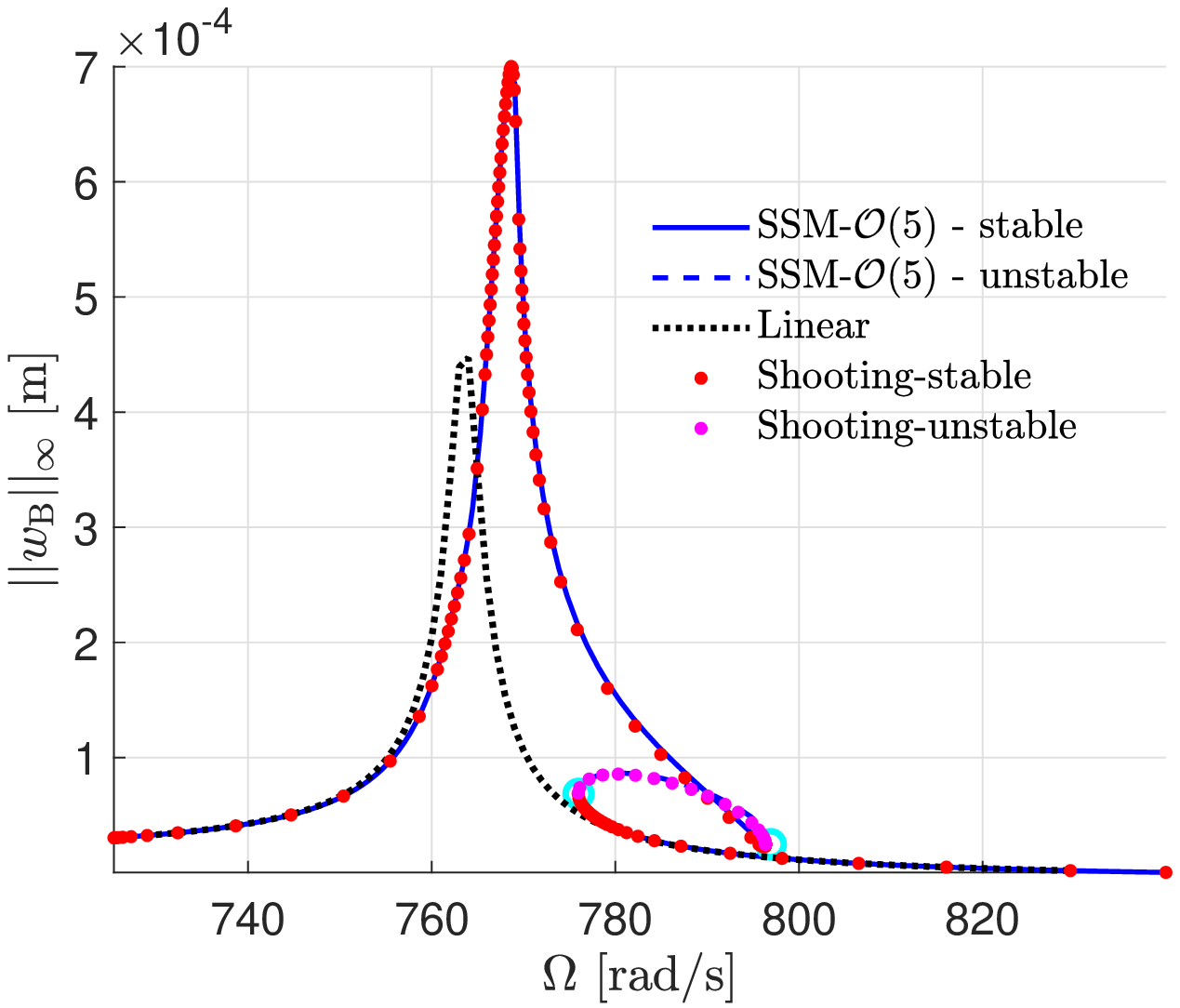}
\caption{FRC in physical coordinates for von K\'arm\'an plate discretized with 200 elements and 606 DOF. The upper and lower panels give the amplitude of deflection at point A and B respectively. Here the black dotted lines are results of linear analysis. The red and magenta dots are results of shooting-based continuation of the full nonlinear system.}
\label{fig:plate_FRC_physical}
\end{figure}

We also present the results of the linear analysis in Fig.~\ref{fig:plate_FRC_physical} to demonstrate the essential nature of geometric nonlinearity.
In the linear analysis, we ignore the nonlinear force and solve the corresponding FRC analytically in the frequency domain. Specifically, the linear equation of motion can be written in the following form
\begin{equation}
\boldsymbol{M}\ddot{\boldsymbol{x}}+\boldsymbol{C}\dot{\boldsymbol{x}}+\boldsymbol{K}\boldsymbol{x}=\epsilon\boldsymbol{f}\cos\Omega t=\epsilon\mathrm{Re}(e^{\mathrm{i}\Omega t})\boldsymbol{f}.
\end{equation}
Letting $\boldsymbol{x}(t)=\mathrm{Re}(\hat{\boldsymbol{x}}e^{\mathrm{i}\Omega t})$ gives
\begin{equation}
\left(-\Omega^2\boldsymbol{M}+\mathrm{i}\Omega\boldsymbol{C}+\boldsymbol{K}\right)\hat{\boldsymbol{x}}=\epsilon\boldsymbol{f}
\end{equation}
and hence
\begin{equation}
\boldsymbol{x}(t)=\mathrm{Re}\left(\left(-\Omega^2\boldsymbol{M}+\mathrm{i}\Omega\boldsymbol{C}+\boldsymbol{K}\right)^{-1}\epsilon\boldsymbol{f}e^{\mathrm{i}\Omega t}\right).
\end{equation}
The results by linear analysis match well with the ones of SSM reduction when the response amplitude is small or the excitation frequency $\Omega$ is far away from the natural frequency $\omega_2$. The linear results significantly deviate from the results of SSM reduction when the response amplitude is large. In these cases, the deformation is large and the effects of geometrical nonlinearity are significant.

Energy transfer between modes due to internal resonance is also observed in this example. As can be seen in Fig.~\ref{fig:plate_FRC_physical}, when the vibration amplitude at node $B$ arrives at a maximum at $\Omega\approx\omega_2$, a notch is observed in the FRC of node $A$ at the same excitation frequency. Indeed, similar phenomenon is observed in the FRC of $(\rho_1,\rho_2)$, as shown in Fig.~\ref{fig:plate_FRC_modal}. Note that the normal coordinates $\rho_1$ and $\rho_2$ of reduced dynamics represent the responses of the second and third bending modes, respectively. Interestingly, the hardening of the third bending mode $\rho_2$ results in the self-crossing of the FRC of the second bending mode $\rho_1$. One may note the similarity between the FRC of $||w_\mathrm{A}||_\infty$ and the one of $\rho_2$, and the similarity between the FRC of $||w_\mathrm{B}||_\infty$ and the one of $\rho_1$. Such similarities can be explained by the fact that node A and node B are (nearly) located at the peak response of the third and second bending modes, respectively (also at the nodal lines of the second and third mode respectively, cf.~Figs.~\ref{fig:plate_mesh}-\ref{fig:plate_mode}).

\begin{figure}[!ht]
\centering
\includegraphics[width=0.45\textwidth]{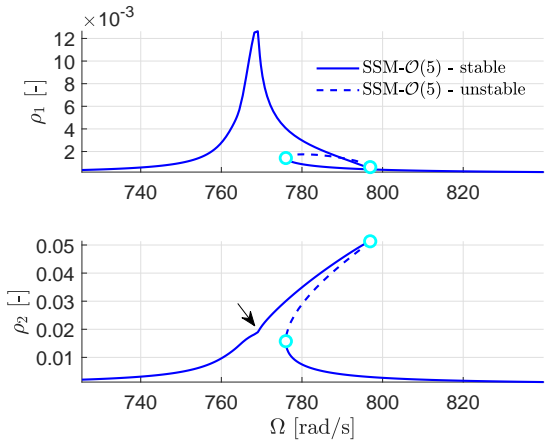}
\caption{FRC in normal coordinates for von K\'arm\'an plate discretized with 200 elements and 606 DOF. Mode interactions are observed. Specifically, when $\rho_1$ arrives its peak, a notch is observed in the FRC of $\rho_2$. In addition, an unstable branch is observed in $\rho_1$ as well. Such unstable solutions will be missing if we only include this mode in the SSM analysis.}
\label{fig:plate_FRC_modal}
\end{figure}

SSM reduction displays a significant speed-up gain relative to the shooting method in the above computations. Specifically, the computational time for SSM reduction is about one minute while the one for shooting method is about 6 days. In order to further demonstrate the speed-up gain relative to the collocation method and the harmonic balance method, we consider a discrete plate with $n_{\mathrm{p}}=5$, $N_{\mathrm{e}}=50$, resulting in 156 DOF. In this case, the point A with coordinate $(0.2l,0.3l)$ is not at any node of the finite element discretization. We take the neighbor node with coordinates $(0.2l,0.4l)$ as the location of the imposed harmonic excitation. This node is also referred to as point A where the load is applied. The FRC obtained using SSM reduction, and three methods applied to the full system (harmonic balance, collocation, and shooting) are plotted in Fig.~\ref{fig:plate_FRC_physical_5}, which again validates the accuracy of SSM reduction. In addition, the computational times for SSM reduction, the collocation method, and the shooting method are 49 seconds, five days, and 17 hours, respectively. With nominal step size 10, the continuation with the harmonic balance method terminates after seven continuation steps due to the failure of convergence. Such a continuation run took about 38.7 hours. 

\begin{figure}[!ht]
\centering
\includegraphics[width=0.45\textwidth]{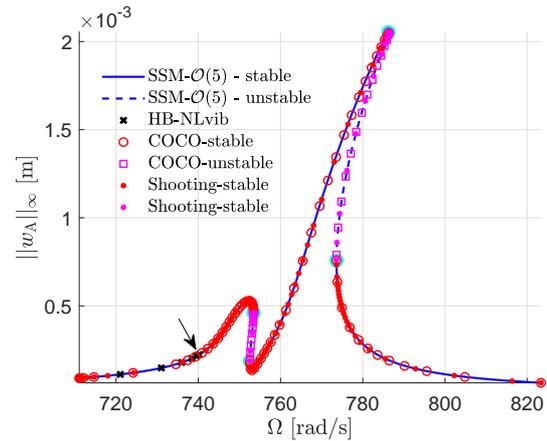}
\caption{FRC in physical coordinates for von K\'arm\'an plate discretized with 50 elements 156 DOF. Here the continuation of harmonic balance method terminates around $\Omega\approx740$ (see the arrow) after seven successful continuation steps.}
\label{fig:plate_FRC_physical_5}
\end{figure}

\begin{sloppypar}
We further perform SSM reduction to the plate discretized under an increasing number of elements to further demonstrate the remarkable computational efficiency of the reduction method. With $n_{\mathrm{p}}$=20, 40, 100, 200, the corresponding number of elements is $N_{\mathrm{e}}$=800, 3,200, 20,000, 80,000, and the number of DOF is $n$=2,406, 9,606, 60,006, 240,006, yielding very high-dimensional systems. The computational times for calculating FRC of these discrete finite element models have been presented in Fig.~\ref{fig:plate_SSMtime}. When the number of DOF is 240,006, the computational time for SSM analysis is about 19 hours. Among the 19 hours, nearly 8 hours are used for the computation of the autonomous part of the SSM, and nearly 11 hours are used for the computation of the non-autonomous part of the SSM (cf.~\eqref{eq:expphi-}) at 337 sampled excitation frequencies. In other words, each computation of a non-autonomous SSM takes about 2 minutes. By contrast, the continuation of fixed points in reduced dynamics only took 20 seconds. One may significantly reduce the computation time for non-autonomous SSM by parallel computing, or ignoring the non-autonomous part of the SSM for small forcing amplitudes, as we discussed in section~\ref{sec:compLoad}.
\end{sloppypar}

\begin{figure}[!ht]
\centering
\includegraphics[width=0.47\textwidth]{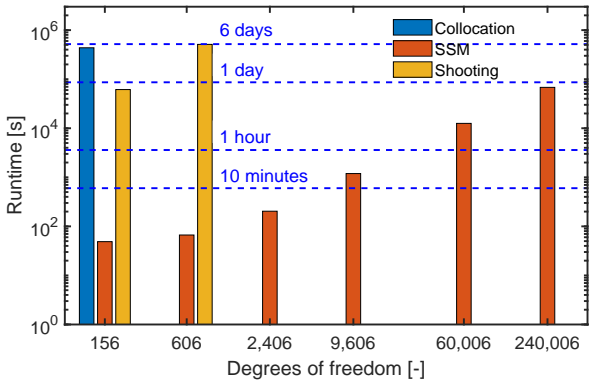}
\caption{Computational times of the FRC of the von K\'arm\'an plate discretized discretized with different number of DOF. The number of DOF is given by $3N_{\mathrm{e}}+6$ when the plate is discretized with $N_{\mathrm{e}}$ elements. Here we have $N_{\mathrm{e}}\in$\{50, 200, 800, 3,200, 20,000, 80,000\}.}
\label{fig:plate_SSMtime}
\end{figure}

We conclude this example by having a close look into the 8 hours spent on the calculation of the autonomous part of the SSM. Specifically, we are interested in how the 8 hours are distributed into the times spent on the computation of the SSM at each order. As can be seen in Fig.~\ref{fig:plate_SSMtime_orders}, the computational time increases nearly exponentially with the increment of the orders, and more than 6 hours among the 8 hours are used in the computation of the fifth order SSM. In addition, Fig.~\ref{fig:plate_SSMtime_orders} shows that the memory cost also increases significantly with the increment of orders. This shows that distributed memory needs to be utilized in the computation of SSM at higher orders for such high degree of freedom.

\begin{figure}[!ht]
\centering
\includegraphics[width=0.4\textwidth]{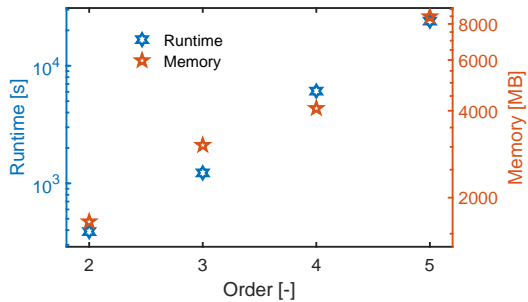}
\caption{Runtime and memory used in the computation at each order of the \textcolor{black}{autonomous} SSM of the von K\'arm\'an plate discretized with 240,006 DOF.}
\label{fig:plate_SSMtime_orders}
\end{figure}

\textcolor{black}{
\subsection{A shallow shell structure}}
\textcolor{black}{This example is adapted from the shallow-arc example of~\cite{SHOBHIT}. We consider a finite element model of a geometrically nonlinear shallow shell structure, illustrated in Fig.~\ref{fig:shell_mesh}. The shell is simply supported at the two opposite edges aligned along the $y-$axis in Fig.~\ref{fig:shell_mesh}.}

\begin{figure}[!ht]
\centering
\includegraphics[width=0.4\textwidth]{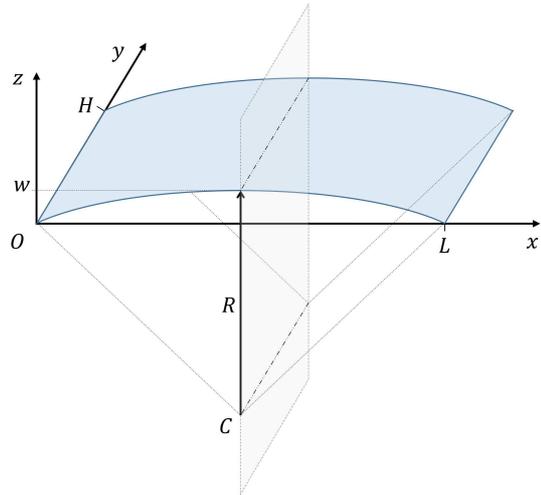}
\caption{\textcolor{black}{The schematic of a shallow shell structure~\cite{SHOBHIT}.}}
\label{fig:shell_mesh}
\end{figure}

\textcolor{black}{
Let $L$, $H$ and $t$ be the length, width and thickness of the shell, $w$ be a curvature parameter (defined as the height of the midpoint relative to the end, cf.~Fig.~\ref{fig:shell_mesh}). We set $L=2\,\textrm{m}$, $H=1\,\textrm{m}$, $t=0.01\,\textrm{m}$ and $w=0.041\,\textrm{m}$. Material properties are specified with the density $\rho=2700\,\mathrm{kg}/\textrm{m}^3$, Young's modulus $E=70\times 10^9$ Pa and Poisson's ratio $\nu=0.0.33$. Note that we have chosen a different value of $w$ compared to~\cite{SHOBHIT}, where $w=0.1\,\mathrm{m}$. Here we set $w=0.041\,\mathrm{m}$ because numerical experiments show that this choice induces a 1:2 internal resonance between the first two modes.}

\textcolor{black}{
Similarly to the previous plate example, this model is discretized using flat, triangular shell elements and each node in the elements has six DOF. The discrete model here contains 400 elements (cf.~Fig.~11(b) in~\cite{SHOBHIT} for the schematic of the mesh of the discrete model), resulting in $n=1320$ DOF. Again, the matrices $\boldsymbol{M}$ and $\boldsymbol{K}$ and the coefficients of nonlinear terms are provided by the open-source finite element code~\cite{FEcode}. Here, we choose $\alpha$ and $\beta$ in the Rayleigh damping~\eqref{eq:RayleighDamping} such that the damping ratios of the first two modes are equal to 0.002.}

\textcolor{black}{The eigenvalues of the first two pairs of modes of the discrete model are given by
\begin{equation}
    \lambda_{1,2}=-0.30\pm\mathrm{i}149.22,\quad\lambda_{3,4}=-0.60\pm\mathrm{i}298.78.
\end{equation}
Therefore, the system indeed has a near 1:2 internal resonance between the first two pairs of modes. Next, we apply a concentrated load $10\cos\Omega t\,\mathrm{N}$ in $z-$direction at the mesh node located at $(x,y)=(0.25L,0.5H)$. We are concerned with the forced response curve in terms of the $z$-displacement of the node for $\Omega\in[0.92\mathrm{Im}(\lambda_1),1.07\mathrm{Im}(\lambda_1)]$.}

\textcolor{black}{The FRCs obtained by SSM reduction computations at different orders are presented in Fig.~\ref{fig:shell_FRC_orders}. We observed that the FRC converges well at $\mathcal{O}(5)$ expansion of the SSM. The computation time of the FRC by SSM reduction at $\mathcal{O}(5)$ is about two minutes.}

\begin{figure}[!ht]
\centering
\includegraphics[width=0.45\textwidth]{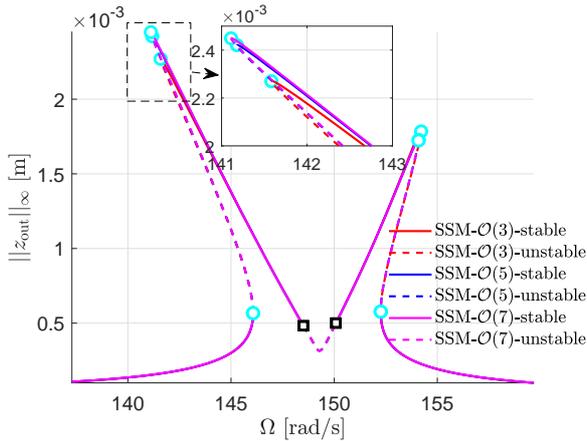}
\caption{\textcolor{black}{The FRC in the amplitude of $z$-displacement at the mesh node $(x,y)=(0.5L,0.5H)$ of the shallow shell structure discretized with 400 elements and 1320 DOF. These FRCs are obtained using SSM computations at different orders.}}
\label{fig:shell_FRC_orders}
\end{figure}

\textcolor{black}{Once again, we apply the \textsc{coco}-based shooting toolbox~\cite{coco-shoot} to extract the FRC of the full nonlinear system and compare with the results obtained from SSM reduction. In particular, the Newmark algorithm is used to perform numerical integration during shooting. Unlike the previous example, we need to adopt a smaller number of integration steps per excitation period in this model because the number of DOF here is nearly doubled and at the same time,  the FRC has a more complex shape (cf.~Figs.~\ref{fig:plate_FRC_physical} and~\ref{fig:shell_FRC_orders}). When we set 100 integration steps per excitation period and the time threshold of shooting-based continuation run to be 180 hours (7.5 days), the shooting-based continuation run was not able to cover the full FRC (see the end point of the red lines near $\Omega=155$ in Fig.~\ref{fig:FRC_shell_comparison}). The FRC obtained by the shooting method with 100 integration steps per excitation period matches well with the one from SSM reduction overall. However, small discrepancies were observed. These discrepancies are resulted from the low accuracy of numerical integration. Indeed, when the number of integration steps per excitation period is increased to 200, another continuation run for $\Omega\in[145,152]$ was performed and the discrepancies in the full solution are reduced significantly, as seen in Fig.~\ref{fig:FRC_shell_comparison}. We observe that even in the restricted frequency range of $\Omega\in[145,152]$, the time taken by the new continuation run is already near five days. Hence, we conclude that the results from SSM reduction provide good accuracy, and remarkably, can be obtained in just about two minutes.}

\begin{figure}[!ht]
\centering
\includegraphics[width=0.45\textwidth]{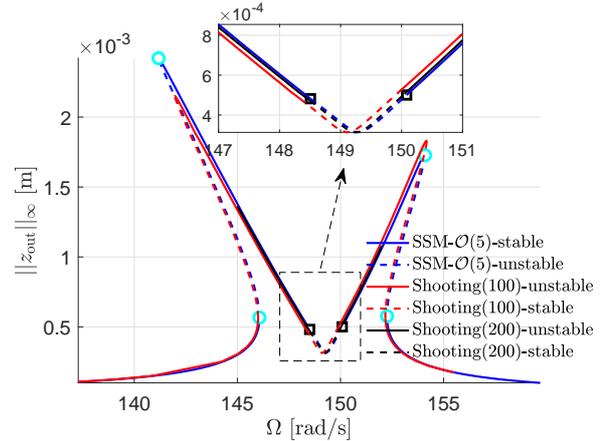}
\caption{\textcolor{black}{The FRC in the amplitude of $z$-displacement at the mesh node $(x,y)=(0.5L,0.5H)$ of the shallow shell structure discretized with 400 elements and 1320 DOF. Here the red and black lines are results of shooting-based continuation of the full nonlinear system with 100 and 200 integration steps per excitation period.}}
\label{fig:FRC_shell_comparison}
\end{figure}

\section{Conclusion}
\begin{sloppypar}
We have derived reduced-order models for harmonically excited mechanical systems with internal resonance. The phase space of a high-dimensional full system is reduced to a low-dimensional time-periodic spectral submanifold (SSM) tangent to resonant spectral subbundles of periodic orbits born out of the origin under periodic forcing. We have used the reduced-order model to extract forced response curves (FRCs) of periodic orbits of the full system around internally resonant modes. Specifically, in normal form coordinates for the reduced dynamics, time-dependent harmonic terms are all canceled, yielding slow-phase reduced dynamics, whose fixed points correspond to periodic orbits on the time-varying SSM. We have used parameter continuation to construct the FRCs as solution branches of fixed points. Such a solution branch is then mapped back to physical coordinates to obtain the forced response curve of the full system.
\end{sloppypar}

\begin{sloppypar}
We have demonstrated the accuracy and efficiency of the SSM-based reduction method using \textcolor{black}{seven} examples. In the first example, a chain of oscillators with 1:1:1 internal resonance was studied to show that the SSM analysis can be applied to systems with several resonant modes. In the second example, a hinged-clamped beam with 1:3 internal resonance was investigated to illustrate the advantage of SSM reduction over the method of multiple scales. \textcolor{black}{In the third example, an axially moving beam with 1:3 internal resonance was explored to demonstrate the effectiveness of SSM reduction for systems with gyroscopic and nonlinear damping forces.} 

We further considered \textcolor{black}{four} examples of the finite element models of beams, plates \textcolor{black}{and shell-based three-dimensional structures} to demonstrate the remarkable computational efficiency of the SSM reduction in obtaining FRCs. Specifically, the FRC over a given frequency span of a von K\'arm\'an beam discretized with various number of degrees of freedom (DOF), ranging from 22 to 29,998, has been calculated using the reduction and other methods whenever the latter methods were applicable. In the case of 118 DOF, the computational times for the extraction of FRC using SSM reduction, the harmonic balance method and the collocation method are 14 seconds, 12.5 hours and 58.5 hours, respectively. For the beam discretized with 29,998 DOF, SSM reduction only takes approximately 1 hour to obtain the FRC. Such a remarkable computational efficiency of the reduction is also observed in the \textcolor{black}{Timoshenko beam,} von K\'arm\'an plate \textcolor{black}{and shallow shell structures}. \textcolor{black}{We have calculated the FRC of a cantilever Timoshenko beam that undergoes large deformations, where the computational time for the SSM reduction is just 29 seconds. At the same time, the collocation method takes 3.8 hours and the harmonic balance method is not able cover the FRC of the full system even in a full day. Further, the FRC of a 240,006 DOF von K\'arm\'an plate over a predefined frequency span (with 337 sampled frequencies) is obtained via SSM reduction in less than one day. Finally, the FRC of a 1,320 DOF shallow shell structure is obtained via a four-dimensional SSM in just two minutes while the continuation-based shooting method was unable to cover the full system's FRC even in a full week's computation time.}
\end{sloppypar}
We have used parameter continuation to locate equilibria of the slow-phase reduced dynamics. An intrinsic limitation of parameter continuation is the dependence of initial solution. Such a dependence makes it challenging to find isolated solution branches, or, \emph{isolas}. In the case of no internal resonance, the equilibria can be found as the intersection of two surfaces in a three-dimensional space~\cite{ponsioen2019analytic}. Such a level-set based technique, however, becomes impractical in general when the dimension of SSM is higher than two. The computation of isolas using parameter continuation could be possible with the help of singularity theory~\cite{cirillo2017analysis} or multidimensional continuation~\cite{dankowicz2020multidimensional}.

Another limitation of our current implementation it that it does not give a estimation of the upper bound of forcing amplitudes $\epsilon$ for which the reduction results are reliable. In~\cite{ponsioen2019analytic}, the domain of convergence has been used to estimate the upper bound of the reliable response amplitudes. The method is based on the computation of all zeros of a polynomial function~\cite{ponsioen2019analytic,christiansen2006truncated}. When internal resonance is accounted, this turns into locating the zeros of a \emph{set} of polynomial functions, which is not a trivial task. As an alternative, one may determine the radius of convergence of power series based on the coefficients of the series, i.e., some variants of Cauchy–Hadamard theorem.

In the continuation of equilibria in reduced-order models, we have observed both saddle-node and Hopf bifurcation points in numerical examples. In Part II, we will relate these bifurcations to the bifurcation of periodic orbits. Note that a unique limit cycle will bifurcate from a Hopf bifurcation equilibrium. Such a limit cycle corresponds to a two-dimensional torus in full system. In Part II, we will also study the computation and bifurcation of quasi-periodic orbits using SSM theory.

\section{Appendix}
\subsection{Derivation of the leading-order approximation to the reduced dynamics on a resonant SSM}
The derivation of leading-order approximation with multiple harmonics has been presented in~\cite{SHOBHIT}. Here we restrict attention to one harmonic and give a simple derivation to adapt for this study.
\label{sec:proof-thm-ssm-nonauto}
\begin{sloppypar}
Substituting the leading order approximation into~\eqref{eq:SSM-nonauto-eq}, and collecting the terms that are independent of $\boldsymbol{p}$, yield
\begin{equation}
\label{eq:leading-order-invariant}
\boldsymbol{B}\boldsymbol{W}_{\mathbf{I}}\boldsymbol{S}_{\boldsymbol{0}}(\phi)+\Omega\boldsymbol{B}D_{\phi}\boldsymbol{X}_{\boldsymbol{0}}(\phi)=\boldsymbol{A}\boldsymbol{X}_{\boldsymbol{0}}(\phi)+\boldsymbol{F}^{\mathrm{ext}}(\phi).
\end{equation}
Substituting the ansatz
\begin{gather}
\boldsymbol{X}_{\boldsymbol{0}}(\phi)=\boldsymbol{x}_{\boldsymbol{0}}e^{\mathrm{i}\phi}+\bar{\boldsymbol{x}}_{\boldsymbol{0}}e^{-\mathrm{i}\phi},\nonumber\\
\boldsymbol{S}_{\boldsymbol{0}}(\phi)=\boldsymbol{s}_{\boldsymbol{0}}^+e^{\mathrm{i}\phi}+{\boldsymbol{s}}_{\boldsymbol{0}}^-e^{-\mathrm{i}\phi},
\end{gather}
and~\eqref{eq:forcing-conj} into~\eqref{eq:leading-order-invariant} and collecting the coefficients of $e^{\mathrm{i}\phi}$ and $e^{-\mathrm{i}\phi}$, we obtain
\begin{gather}
(\boldsymbol{A}-\mathrm{i}\Omega\boldsymbol{B})\boldsymbol{x}_{\boldsymbol{0}}=\boldsymbol{B}\boldsymbol{W}_{\mathbf{I}}\boldsymbol{s}_{\boldsymbol{0}}^+-\boldsymbol{F}^\mathrm{a}\label{eq:expphi},\\
(\boldsymbol{A}+\mathrm{i}\Omega\boldsymbol{B})\bar{\boldsymbol{x}}_{\boldsymbol{0}}=\boldsymbol{B}\boldsymbol{W}_{\mathbf{I}}{\boldsymbol{s}}_{\boldsymbol{0}}^--{\boldsymbol{F}}^{\mathrm{a}}\label{eq:expNegphi}.
\end{gather}
If $(\boldsymbol{A}-\mathrm{i}\Omega\boldsymbol{B})$ is nonsingular, we can simply set $\boldsymbol{s}_{\boldsymbol{0}}^+=\boldsymbol{0}$ and directly solve the linear system~\eqref{eq:expphi} to obtain $\boldsymbol{x}_{\boldsymbol{0}}$. However, if there exist eigenvalues equal to $\mathrm{i}\Omega$, e.g., $\lambda_i^{\mathcal{E}}=\mathrm{i}\Omega$, the coefficient matrix is singular (see Proposition 2 in~\cite{SHOBHIT}). In that case, we must choose $\boldsymbol{s}_{\boldsymbol{0}}$ such that the right-hand side vector is in the range of $(\boldsymbol{A}-\mathrm{i}\Omega\boldsymbol{B})$. This can be done by imposing orthogonality constraint between the right-hand side vector and the kernel of $(\boldsymbol{A}-\mathrm{i}\Omega\boldsymbol{B})^\ast$. Since $\boldsymbol{u}_i^\mathcal{E}$ spans such a kernel for $\lambda_i^{\mathcal{E}}=\mathrm{i}\Omega$~\cite{SHOBHIT}, we have 
\begin{equation}
(\boldsymbol{u}_i^\mathcal{E})^\ast\boldsymbol{B}\boldsymbol{W}_{\mathbf{I}}\boldsymbol{s}_{\boldsymbol{0}}^+-(\boldsymbol{u}_i^\mathcal{E})^\ast\boldsymbol{F}^\mathrm{a}=0.
\end{equation}
Substituting~\eqref{eq:auto-ssm-first-order} into the above equation and utilizing the orthonormalization of the left and right eigenvectors (cf.~\eqref{eq:ortho-norm-eigenvectors}) gives ${{S}}_{\boldsymbol{0},i}=(\boldsymbol{u}_i^\mathcal{E})^\ast\boldsymbol{F}^\mathrm{a}$, where ${{S}}_{\boldsymbol{0},i}$ is defined in~\eqref{thm:ssm-nonauto}.

In practice, $\lambda_i^{\mathcal{E}}=\mathrm{i}\Omega$ does not hold for any $\Omega\in\mathbb{R}$ given we have assumed $\mathrm{Re}\lambda_i^{\mathcal{E}}<0$. However, we have $\lambda_i^{\mathcal{E}}\approx\mathrm{i}\Omega$ for systems with weak damping, and the above derivation is still used to avoid the ill-conditioning in solving the linear equations~\eqref{eq:expphi}. When $\lambda_i^{\mathcal{E}}\approx\mathrm{i}\Omega$, we have $\bar{\lambda}_i^{\mathcal{E}}\approx-\mathrm{i}\Omega$. 
\end{sloppypar}

\subsection{Proof of Theorem~\ref{theo:polar}}
\label{sec:proof-theo-polar}
\subsubsection{A lemma}
We first introduce a lemma which will be used in the proof of Theorems~\ref{theo:polar} and~\ref{theo:cartesian}.
\begin{lemma}
\label{le:ext-res}
For all $(\boldsymbol{l},\boldsymbol{j})\in\mathcal{R}_i$, and $\boldsymbol{r}$ satisfying the \emph{external} resonance condition~\eqref{eq:res-forcing}, we have
\begin{equation}
\langle \boldsymbol{l}-\boldsymbol{j}-\boldsymbol{e}_i, \boldsymbol{r}\rangle=0,
\end{equation}
where $\boldsymbol{e}_i\in\mathbb{R}^m$ is the unit vector alinged with the $i$-th axis.
\end{lemma}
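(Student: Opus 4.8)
The plan is to derive the claimed orthogonality directly from the two resonance relations, read at the level of the imaginary parts of the master eigenvalues, where they are exactly commensurate. First I would recall that membership $(\boldsymbol{l},\boldsymbol{j})\in\mathcal{R}_i$ encodes the inner resonance $\lambda_i^\mathcal{E}\approx\boldsymbol{l}\cdot\boldsymbol{\lambda}^\mathcal{E}+\boldsymbol{j}\cdot\bar{\boldsymbol{\lambda}}^\mathcal{E}$ of \eqref{eq:res-inner}--\eqref{eq:def-Ri}, while the external resonance \eqref{eq:res-forcing} fixes $\boldsymbol{r}\in\mathbb{Q}^m$ through $\boldsymbol{\lambda}^\mathcal{E}\approx\mathrm{i}\boldsymbol{r}\Omega$ and $\bar{\boldsymbol{\lambda}}^\mathcal{E}\approx-\mathrm{i}\boldsymbol{r}\Omega$, that is, $\lambda_k^\mathcal{E}\approx\mathrm{i}r_k\Omega$ for each $k$.

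The core computation is a single substitution. I would insert the external resonance relations into the inner resonance condition to obtain
\[
\mathrm{i}r_i\Omega\approx\sum_{k=1}^m l_k(\mathrm{i}r_k\Omega)+\sum_{k=1}^m j_k(-\mathrm{i}r_k\Omega)=\mathrm{i}\Omega\,\langle\boldsymbol{l}-\boldsymbol{j},\boldsymbol{r}\rangle.
\]
Dividing by the nonzero factor $\mathrm{i}\Omega$ and using $\langle\boldsymbol{e}_i,\boldsymbol{r}\rangle=r_i$ then yields $\langle\boldsymbol{l}-\boldsymbol{j}-\boldsymbol{e}_i,\boldsymbol{r}\rangle\approx0$, which is the assertion up to the distinction between $\approx$ and $=$.

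The main obstacle, and the only nontrivial point, is upgrading this near-identity to the exact equality stated in the lemma. I would argue that the left-hand side $\langle\boldsymbol{l}-\boldsymbol{j}-\boldsymbol{e}_i,\boldsymbol{r}\rangle$ is a fixed rational number, determined entirely by the integer multi-indices $\boldsymbol{l},\boldsymbol{j}\in\mathbb{N}_0^m$ and the rational vector $\boldsymbol{r}\in\mathbb{Q}^m$; it carries no dependence on the small damping or on the detuning hidden in the symbol $\approx$. Both the defining relation of $\mathcal{R}_i$ and the choice of $\boldsymbol{r}$ reflect the \emph{same} commensurability of $\mathrm{Im}(\boldsymbol{\lambda}^\mathcal{E})$: the vector $\boldsymbol{r}$ is precisely the array of idealized frequency ratios realized when $\mathrm{Im}(\lambda_k^\mathcal{E})=r_k\Omega$ holds exactly, and $(\boldsymbol{l},\boldsymbol{j})\in\mathcal{R}_i$ means $\mathrm{Im}(\lambda_i^\mathcal{E})=\langle\boldsymbol{l}-\boldsymbol{j},\mathrm{Im}(\boldsymbol{\lambda}^\mathcal{E})\rangle$ in that same limit. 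Substituting the exact idealized relation into the exact idealized inner resonance produces $r_i=\langle\boldsymbol{l}-\boldsymbol{j},\boldsymbol{r}\rangle$ exactly, so the fixed rational quantity $\langle\boldsymbol{l}-\boldsymbol{j}-\boldsymbol{e}_i,\boldsymbol{r}\rangle$ must vanish identically rather than merely approximately.

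Finally, I would note where the lemma pays off, as a consistency check on the statement. In the proofs of Theorems~\ref{theo:polar} and~\ref{theo:cartesian} the substitution $q_i=\rho_i e^{\mathrm{i}(\theta_i+r_i\Omega t)}$ turns each resonant monomial $\boldsymbol{q}^{\boldsymbol{l}}\bar{\boldsymbol{q}}^{\boldsymbol{j}}$ into $\boldsymbol{\rho}^{\boldsymbol{l}+\boldsymbol{j}}e^{\mathrm{i}\langle\boldsymbol{l}-\boldsymbol{j},\boldsymbol{\theta}\rangle}e^{\mathrm{i}\langle\boldsymbol{l}-\boldsymbol{j},\boldsymbol{r}\rangle\Omega t}$, and after passing to the frame rotating with $r_i\Omega$ the surviving time factor is $e^{\mathrm{i}(\langle\boldsymbol{l}-\boldsymbol{j},\boldsymbol{r}\rangle-r_i)\Omega t}$. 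The lemma guarantees this exponent is exactly zero, so the rotating-frame reduced dynamics is genuinely autonomous and the residual phase collapses to $\varphi_i(\boldsymbol{l},\boldsymbol{j})=\langle\boldsymbol{l}-\boldsymbol{j}-\boldsymbol{e}_i,\boldsymbol{\theta}\rangle$ of \eqref{eq:varphi-ang}; this is the feature I would use to confirm that the exact (not approximate) version of the identity is the one actually required.
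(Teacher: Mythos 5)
Your proof is correct and follows essentially the same route as the paper: substitute the external resonance $\boldsymbol{\lambda}^\mathcal{E}\approx\mathrm{i}\boldsymbol{r}\Omega$ into the inner resonance defining $\mathcal{R}_i$ to get $r_i=\langle\boldsymbol{l}-\boldsymbol{j},\boldsymbol{r}\rangle$, and then invoke the integer/rational structure of $\boldsymbol{l},\boldsymbol{j},\boldsymbol{r}$ to conclude that this identity holds exactly even when the resonance conditions are only approximate. Your elaboration of the exactness step (and the closing consistency check) is somewhat more detailed than the paper's one-line justification, but the underlying argument is the same.
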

\begin{proof}
Note that if the \emph{inner} resonance condition \eqref{eq:res-inner} and the \emph{external} resonance condition~\eqref{eq:res-forcing} holding exactly (i.e., `$\approx$' becomes `$=$' in \eqref{eq:res-inner},\eqref{eq:res-forcing}), we have
\begin{equation}
\label{eq:inner-external-res}
r_i=\boldsymbol{l}\cdot\boldsymbol{r}-\boldsymbol{j}\cdot{\boldsymbol{r}},
\end{equation}
which can be rewritten as $\langle \boldsymbol{l}-\boldsymbol{j}-\boldsymbol{e}_i, \boldsymbol{r}\rangle=0$. Now even when the inner and external resonance conditions are approximately satisfied, eq.~\eqref{eq:inner-external-res} still holds as the entries in $\boldsymbol{l}$ and $\boldsymbol{j}$ are integers.
\end{proof}

\subsubsection{Proof of the theorem}
\label{proofoftheorem}
\begin{sloppypar}
Based on Theorem~\ref{th:SSM-existence-uniqueness} along with equations~\eqref{eq:red-exp-eps}, \eqref{eq:red-auto-block}, \eqref{eq:leading-nonauto}, \eqref{eq:red-nonauto-lead} and~\eqref{eq:red-nonaut-block}, the reduced dynamics in normal form coordinates $(q_i,\bar{q}_i)$ is given by
\begin{equation}
\label{eq:polar-proof1}
\begin{pmatrix}\dot{q}_i\\\dot{\bar{q}}_i\end{pmatrix}=\boldsymbol{R}_{i}(\boldsymbol{p})+\epsilon\boldsymbol{S}_{\boldsymbol{0},i}(\Omega t)+\mathcal{O}(\epsilon |\boldsymbol{p}|)
\end{equation}
for $i=1,\cdots,m$. From~\eqref{thm:ssm-auto} and~\eqref{eq:polar-form}, we have
\begin{gather}
\label{eq:polar-proof2}
\scalebox{0.8}{$
\begin{aligned}
    & \boldsymbol{R}_{i}(\boldsymbol{p})  =\begin{pmatrix}\lambda_i^{\mathcal{E}}q_i\\\bar{\lambda}_i^{\mathcal{E}}\bar{q}_i\end{pmatrix}+\sum_{(\boldsymbol{l},\boldsymbol{j})\in\mathcal{R}_i}\begin{pmatrix}\gamma(\boldsymbol{l},\boldsymbol{j})\boldsymbol{q}^{\boldsymbol{l}}\bar{\boldsymbol{q}}^{\boldsymbol{j}}\\\bar{\gamma}(\boldsymbol{l},\boldsymbol{j})\boldsymbol{q}^{\boldsymbol{j}}\bar{\boldsymbol{q}}^{\boldsymbol{l}}\end{pmatrix}\\
    & = \begin{pmatrix}\lambda_i^{\mathcal{E}}\rho_ie^{\mathrm{i}(\theta_i+r_i\Omega t)}\\\bar{\lambda}_i^{\mathcal{E}}\rho_ie^{-\mathrm{i}(\theta_i+r_i\Omega t)}\end{pmatrix}+
     \sum_{(\boldsymbol{l},\boldsymbol{j})\in\mathcal{R}_i}\begin{pmatrix}\gamma(\boldsymbol{l},\boldsymbol{j})\boldsymbol{\rho}^{\boldsymbol{l}+\boldsymbol{j}}e^{\mathrm{i}(\langle \boldsymbol{l}-\boldsymbol{j},\boldsymbol{\theta} \rangle+\langle \boldsymbol{l}-\boldsymbol{j},\boldsymbol{r} \rangle\Omega t)}\\\bar{\gamma}(\boldsymbol{l},\boldsymbol{j})\boldsymbol{\rho}^{\boldsymbol{l}+\boldsymbol{j}}e^{\mathrm{i}(\langle \boldsymbol{j}-\boldsymbol{l},\boldsymbol{\theta} \rangle+\langle \boldsymbol{j}-\boldsymbol{l},\boldsymbol{r} \rangle\Omega t)}\end{pmatrix}\\
    & = \begin{pmatrix}\lambda_i^{\mathcal{E}}\rho_ie^{\mathrm{i}(\theta_i+r_i\Omega t)}\\\bar{\lambda}_i^{\mathcal{E}}\rho_ie^{-\mathrm{i}(\theta_i+r_i\Omega t)}\end{pmatrix}\\
    & +\sum_{(\boldsymbol{l},\boldsymbol{j})\in\mathcal{R}_i}\begin{pmatrix}\gamma(\boldsymbol{l},\boldsymbol{j})\boldsymbol{\rho}^{\boldsymbol{l}+\boldsymbol{j}}e^{\mathrm{i}(\langle \boldsymbol{l}-\boldsymbol{j}-\mathbf{e}_i,\boldsymbol{\theta} \rangle+\langle \boldsymbol{l}-\boldsymbol{j}-\mathbf{e}_i,\boldsymbol{r} \rangle\Omega t)}e^{\mathrm{i}(\theta_i+r_i\Omega t)}\\\bar{\gamma}(\boldsymbol{l},\boldsymbol{j})\boldsymbol{\rho}^{\boldsymbol{l}+\boldsymbol{j}}e^{\mathrm{i}(\langle \boldsymbol{j}-\boldsymbol{l}+\mathbf{e}_i,\boldsymbol{\theta} \rangle+\langle \boldsymbol{j}-\boldsymbol{l}+\mathbf{e}_i,\boldsymbol{r} \rangle\Omega t)}e^{-\mathrm{i}(\theta_i+r_i\Omega t)}\end{pmatrix}\\
   & =\begin{pmatrix}\lambda_i^{\mathcal{E}}\rho_ie^{\mathrm{i}(\theta_i+r_i\Omega t)}\\\bar{\lambda}_i^{\mathcal{E}}\rho_ie^{-\mathrm{i}(\theta_i+r_i\Omega t)}\end{pmatrix}+
    \sum_{(\boldsymbol{l},\boldsymbol{j})\in\mathcal{R}_i}\begin{pmatrix}\gamma(\boldsymbol{l},\boldsymbol{j})\boldsymbol{\rho}^{\boldsymbol{l}+\boldsymbol{j}}e^{\mathrm{i}\varphi_i(\boldsymbol{l},\boldsymbol{j})}e^{\mathrm{i}(\theta_i+r_i\Omega t)}\\\bar{\gamma}(\boldsymbol{l},\boldsymbol{j})\boldsymbol{\rho}^{\boldsymbol{l}+\boldsymbol{j}}e^{-\mathrm{i}\varphi_i(\boldsymbol{l},\boldsymbol{j})}e^{-\mathrm{i}(\theta_i+r_i\Omega t)}\end{pmatrix},
\end{aligned}$}
\end{gather}
where we have used Lemma~\ref{le:ext-res} and~\eqref{eq:varphi-ang} in the last equality. Using~\eqref{thm:ssm-nonauto},~\eqref{eq:ssm-nonauto} and~\eqref{eq:fi}, we have
\begin{equation}
\label{eq:polar-proof3}
\boldsymbol{S}_{\boldsymbol{0},i}(\Omega t)=\begin{pmatrix}f_ie^{\mathrm{i}r_i\Omega t}\\\bar{f}_ie^{-\mathrm{i}r_i\Omega t}\end{pmatrix}.
\end{equation}
\end{sloppypar}

Substituting equations~\eqref{eq:polar-proof2},~\eqref{eq:polar-proof3} and~\eqref{eq:polar-form} into~\eqref{eq:polar-proof1}, and factoring out $e^{\mathrm{i}(\theta_i+r_i\Omega t)}$ and its complex conjugate, we obtain
\begin{align}
    & \begin{pmatrix}\dot{\rho}_i+\mathrm{i}(\dot\theta_i+r_i\Omega)\rho_i\\\dot{\rho}_i-\mathrm{i}(\dot\theta_i+r_i\Omega)\rho_i\end{pmatrix}= \begin{pmatrix}\lambda_i^{\mathcal{E}}\rho_i\\\bar{\lambda}_i^{\mathcal{E}}\rho_i\end{pmatrix}\nonumber\\
    & +\sum_{(\boldsymbol{l},\boldsymbol{j})\in\mathcal{R}_i}
\begin{pmatrix}\gamma(\boldsymbol{l},\boldsymbol{j})\boldsymbol{\rho}^{\boldsymbol{l}+\boldsymbol{j}}e^{\mathrm{i}\varphi_i(\boldsymbol{l},\boldsymbol{j})}\\
\bar{\gamma}(\boldsymbol{l},\boldsymbol{j})\boldsymbol{\rho}^{\boldsymbol{l}+\boldsymbol{j}}e^{-\mathrm{i}\varphi_i(\boldsymbol{l},\boldsymbol{j})}
\end{pmatrix}+\epsilon\begin{pmatrix}f_ie^{-\mathrm{i}\theta_i}\\\bar{f}_ie^{\mathrm{i}\theta_i})\end{pmatrix}\nonumber\\& +\mathcal{O}(\epsilon |\boldsymbol{\rho}|)\boldsymbol{g}_i^\mathrm{p}(\phi),\label{eq:red_dyn_rho_i}
\end{align}
where $\boldsymbol{g}_i^\mathrm{p}:\mathbb{S}\to\mathbb{R}^2$ is a periodic function and ${\phi}=\Omega t$. Note that the second component in the above equation is simply the complex conjugate of the first component. Hence, equation~\eqref{eq:red_dyn_rho_i} holds if and only if the first component holds. Separation of the real and imaginary parts of the first component yields
\begin{align}
 & \dot{\rho}_i =  \mathrm{Re}(\lambda_i^{\mathcal{E}})\rho_i\nonumber\\
 & +\sum_{(\boldsymbol{l},\boldsymbol{j})\in\mathcal{R}_i}\boldsymbol{\rho}^{\boldsymbol{l}+\boldsymbol{j}}\mathrm{Re}(\gamma(\boldsymbol{l},\boldsymbol{j}))\cos\varphi_i(\boldsymbol{l},\boldsymbol{j})\nonumber\\
 & - \sum_{(\boldsymbol{l},\boldsymbol{j})\in\mathcal{R}_i}\boldsymbol{\rho}^{\boldsymbol{l}+\boldsymbol{j}}\mathrm{Im}(\gamma(\boldsymbol{l},\boldsymbol{j}))\sin\varphi_i(\boldsymbol{l},\boldsymbol{j})+ \epsilon\mathrm{Re}(f_i)\cos\theta_i\nonumber\\
 & +\epsilon\mathrm{Im}(f_i)\sin\theta_i+\mathcal{O}(\epsilon|\boldsymbol{\rho}|)g_{i,1}^\mathrm{p}(\phi)\label{eq:proof1-rho},
\end{align}
\begin{align}
& (\dot{\theta}_i+r_i\Omega)\rho_i =  \mathrm{Im}(\lambda_i^{\mathcal{E}})\rho_i\nonumber\\
& +\sum_{(\boldsymbol{l},\boldsymbol{j})\in\mathcal{R}_i}\boldsymbol{\rho}^{\boldsymbol{l}+\boldsymbol{j}}\mathrm{Re}(\gamma(\boldsymbol{l},\boldsymbol{j}))\sin\varphi_i(\boldsymbol{l},\boldsymbol{j})\nonumber\\
&-\sum_{(\boldsymbol{l},\boldsymbol{j})\in\mathcal{R}_i}\boldsymbol{\rho}^{\boldsymbol{l}+\boldsymbol{j}}\mathrm{Im}(\gamma(\boldsymbol{l},\boldsymbol{j}))\cos\varphi_i(\boldsymbol{l},\boldsymbol{j})-\epsilon\mathrm{Re}(f_i)\sin\theta_i\nonumber\\
& +\epsilon\mathrm{Im}(f_i)\cos\theta_i+\mathcal{O}(\epsilon|\boldsymbol{\rho}|)g_{i,2}^\mathrm{p}(\phi)\label{eq:proof1-theta},
\end{align}
where $g_{i,1}^\mathrm{p}$ and $g_{i,2}^\mathrm{p}$ are the first and the second component of the $\boldsymbol{g}_{i}^\mathrm{p}$, and we have $\dot{\phi}=\Omega$. The above two equations provide us~\eqref{eq:ode-reduced-slow-polar} after rearranging terms. This concludes the proof of statement (i).
\begin{sloppypar}
To prove statements (ii) and (iii), we first consider the leading-order reduced dynamics \begin{equation}
\label{eq:leading-order-red}    \dot{\boldsymbol{p}}=\boldsymbol{R}(\boldsymbol{p})+\epsilon\boldsymbol{S}_{\boldsymbol{0}}(\Omega t).
\end{equation}
We define $r_\mathrm{d}$ to be the largest common divisor for the set of rational numbers $\{r_i\}_{i=1}^m$ and set $T=2\pi/(r_\mathrm{d}\Omega)$. Then, from transformation~\eqref{eq:polar-form}, we deduce that any fixed point of the dynamical system~\eqref{eq:ode-reduced-slow-polar-leading} corresponds to a $T$-periodic solution of the leading-order reduced dynamics~\eqref{eq:leading-order-red} on the SSM, $\mathcal{W}(\mathcal{E},\Omega t)$. This is because all the polar radii $\rho_i$ and the phase differences $\theta_i$ are simultaneously constant at a fixed point. In addition, the periodic orbit inherits the stability of the fixed point.
\end{sloppypar}

We then need to show the persistence of a hyperbolic periodic orbit of the leading-order truncated dynamics under the addition of $\mathcal{O}(\epsilon |\boldsymbol{\rho}|)\boldsymbol{g}(\Omega t)$ to complete the proof of the statements (ii) and (iii). Since these statements are not affected by the choice of coordinates, they hold in Theorem~\ref{theo:cartesian} as well. For brevity, we show the persistence in detail only in the proof of Theorem~\ref{theo:cartesian}. As we will see, the persistence holds under proper inner and external resonance conditions. In particular, we ask for the smallness of $|\mathrm{Re}(\lambda_i^\mathcal{E})|$ and $|\mathrm{Im}(\lambda_i^\mathcal{E})-r_i\Omega|$ for $1\leq i\leq m$ such that the dynamics of $(\rho_i,\theta_i)$ is relatively slow compared to the phase dynamics $\dot{\phi}=\Omega$. This enables the construction of a slow-fast dynamical system. The method of averaging is then applied to complete the proof. Indeed, the leading-order dynamics is an approximated autonomous averaged system associated with the full reduced dynamics for $(\boldsymbol{\rho},\boldsymbol{\theta})$.

\subsection{Proof of Theorem~\ref{theo:cartesian}}
\label{sec:proof-theo-cartesian}
In this case,~\eqref{eq:polar-proof1} still holds. With~\eqref{thm:ssm-auto} and~\eqref{eq:cartesian-form}, we have
\begin{gather}
\label{eq:cartesian-proof2}
\scalebox{0.86}{$
\begin{aligned}
    & \boldsymbol{R}_{0,i}(\boldsymbol{p})  =\begin{pmatrix}\lambda_i^{\mathcal{E}}q_i\\\bar{\lambda}_i^{\mathcal{E}}\bar{q}_i\end{pmatrix}+\sum_{(\boldsymbol{l},\boldsymbol{j})\in\mathcal{R}_i}\begin{pmatrix}\gamma(\boldsymbol{l},\boldsymbol{j})\boldsymbol{q}^{\boldsymbol{l}}\bar{\boldsymbol{q}}^{\boldsymbol{j}}\\\bar{\gamma}(\boldsymbol{l},\boldsymbol{j})\boldsymbol{q}^{\boldsymbol{j}}\bar{\boldsymbol{q}}^{\boldsymbol{l}}\end{pmatrix}\\
    & = \begin{pmatrix}\lambda_i^{\mathcal{E}}(q_{i,\mathrm{s}}^{\mathrm{R}}+\mathrm{i}q_{i,\mathrm{s}}^{\mathrm{I}})e^{\mathrm{i}r_i\Omega t}\\\bar{\lambda}_i^{\mathcal{E}}(q_{i,\mathrm{s}}^{\mathrm{R}}-\mathrm{i}q_{i,\mathrm{s}}^{\mathrm{I}})e^{-\mathrm{i}r_i\Omega t}\end{pmatrix}+\sum_{(\boldsymbol{l},\boldsymbol{j})\in\mathcal{R}_i}\begin{pmatrix}\gamma(\boldsymbol{l},\boldsymbol{j})\boldsymbol{q}_s^{\boldsymbol{l}}\bar{\boldsymbol{q}}_s^{\boldsymbol{j}}e^{\mathrm{i}\langle \boldsymbol{l}-\boldsymbol{j},\boldsymbol{r} \rangle\Omega t}\\\bar{\gamma}(\boldsymbol{l},\boldsymbol{j})\bar{\boldsymbol{q}}_s^{\boldsymbol{l}}\boldsymbol{q}_s^{\boldsymbol{j}}e^{\mathrm{i}\langle \boldsymbol{j}-\boldsymbol{l},\boldsymbol{r} \rangle\Omega t}\end{pmatrix}\\
   & =\begin{pmatrix}\lambda_i^{\mathcal{E}}(q_{i,\mathrm{s}}^{\mathrm{R}}+\mathrm{i}q_{i,\mathrm{s}}^{\mathrm{I}})e^{\mathrm{i}r_i\Omega t}\\\bar{\lambda}_i^{\mathcal{E}}(q_{i,\mathrm{s}}^{\mathrm{R}}-\mathrm{i}q_{i,\mathrm{s}}^{\mathrm{I}})e^{-\mathrm{i}r_i\Omega t}\end{pmatrix}+\\
   & \sum_{(\boldsymbol{l},\boldsymbol{j})\in\mathcal{R}_i}\begin{pmatrix}\gamma(\boldsymbol{l},\boldsymbol{j})\boldsymbol{q}_s^{\boldsymbol{l}}\bar{\boldsymbol{q}}_s^{\boldsymbol{j}}e^{\mathrm{i}\langle \boldsymbol{l}-\boldsymbol{j}-\boldsymbol{e}_i,\boldsymbol{r} \rangle\Omega t}e^{\mathrm{i}r_i\Omega t}\\\bar{\gamma}(\boldsymbol{l},\boldsymbol{j})\bar{\boldsymbol{q}}_s^{\boldsymbol{l}}\boldsymbol{q}_s^{\boldsymbol{j}}e^{\mathrm{i}\langle \boldsymbol{j}-\boldsymbol{l}+\boldsymbol{e}_i,\boldsymbol{r} \rangle\Omega t}e^{-\mathrm{i}r_i\Omega t}\end{pmatrix}\\
   & = \begin{pmatrix}\lambda_i^{\mathcal{E}}(q_{i,\mathrm{s}}^{\mathrm{R}}+\mathrm{i}q_{i,\mathrm{s}}^{\mathrm{I}})e^{\mathrm{i}r_i\Omega t}\\\bar{\lambda}_i^{\mathcal{E}}(q_{i,\mathrm{s}}^{\mathrm{R}}-\mathrm{i}q_{i,\mathrm{s}}^{\mathrm{I}})e^{-\mathrm{i}r_i\Omega t}\end{pmatrix}+\sum_{(\boldsymbol{l},\boldsymbol{j})\in\mathcal{R}_i}\begin{pmatrix}\gamma(\boldsymbol{l},\boldsymbol{j})\boldsymbol{q}_s^{\boldsymbol{l}}\bar{\boldsymbol{q}}_s^{\boldsymbol{j}}e^{\mathrm{i}r_i\Omega t}\\\bar{\gamma}(\boldsymbol{l},\boldsymbol{j})\bar{\boldsymbol{q}}_s^{\boldsymbol{l}}\boldsymbol{q}_s^{\boldsymbol{j}}e^{-\mathrm{i}r_i\Omega t}\end{pmatrix},
\end{aligned}$}
\end{gather}
where we have used Lemma~\ref{le:ext-res} in the last equality. In addition, \eqref{eq:polar-proof3} still holds.

Substituting equations~\eqref{eq:cartesian-proof2},~\eqref{eq:polar-proof3} and~\eqref{eq:cartesian-form} into~\eqref{eq:polar-proof1}, and factoring out $e^{\mathrm{i}r_i\Omega t}$ and its complex conjugate yield
\begin{align}
    & \begin{pmatrix}\dot{q}_{i,\mathrm{s}}^{\mathrm{R}}+\mathrm{i}\dot{q}_{i,\mathrm{s}}^{\mathrm{I}}+(-q_{i,\mathrm{s}}^{\mathrm{I}}+\mathrm{i}q_{i,\mathrm{s}}^{\mathrm{R}})r_i\Omega\\\dot{q}_{i,\mathrm{s}}^{\mathrm{R}}-\mathrm{i}\dot{q}_{i,\mathrm{s}}^{\mathrm{I}}+(-q_{i,\mathrm{s}}^{\mathrm{I}}-\mathrm{i}q_{i,\mathrm{s}}^{\mathrm{R}})r_i\Omega\end{pmatrix}= \begin{pmatrix}\lambda_i^{\mathcal{E}}(q_{i,\mathrm{s}}^{\mathrm{R}}+\mathrm{i}q_{i,\mathrm{s}}^{\mathrm{I}})\\\bar{\lambda}_i^{\mathcal{E}}(q_{i,\mathrm{s}}^{\mathrm{R}}-\mathrm{i}q_{i,\mathrm{s}}^{\mathrm{I}})\end{pmatrix}\nonumber\\
    &+\sum_{(\boldsymbol{l},\boldsymbol{j})\in\mathcal{R}_i}
\begin{pmatrix}\gamma(\boldsymbol{l},\boldsymbol{j})\boldsymbol{q}_s^{\boldsymbol{l}}\bar{\boldsymbol{q}}_s^{\boldsymbol{j}}\\
\bar{\gamma}(\boldsymbol{l},\boldsymbol{j})\bar{\boldsymbol{q}}_s^{\boldsymbol{l}}\boldsymbol{q}_s^{\boldsymbol{j}}
\end{pmatrix}+\epsilon\begin{pmatrix}f_i\\\bar{f}_i\end{pmatrix}\nonumber\\
& +\mathcal{O}(\epsilon|\boldsymbol{q}_\mathrm{s}|)\boldsymbol{g}_i^\mathrm{c}(\phi),
\end{align}
where $\boldsymbol{g}_i^\mathrm{c}:\mathbb{S}\to\mathbb{R}^2$ is a periodic function and $\phi=\Omega t$. Note that the second component in the above equation is simply the complex conjugate of the first component. It follows that the equation holds if and only if the first component holds. Separation of real and imaginary parts of the first component yields
\begin{align}
& \dot{q}_{i,\mathrm{s}}^{\mathrm{R}}-q_{i,\mathrm{s}}^{\mathrm{I}}r_i\Omega  =  \mathrm{Re}(\lambda_i^{\mathcal{E}})q_{i,\mathrm{s}}^{\mathrm{R}}-\mathrm{Im}(\lambda_i^{\mathcal{E}})q_{i,\mathrm{s}}^{\mathrm{I}}\nonumber\\
& +\sum_{(\boldsymbol{l},\boldsymbol{j})\in\mathcal{R}_i}\mathrm{Re}\left(\gamma(\boldsymbol{l},\boldsymbol{j})\boldsymbol{q}_s^{\boldsymbol{l}}\bar{\boldsymbol{q}}_s^{\boldsymbol{j}}\right)+\epsilon\mathrm{Re}(f_i)+\mathcal{O}(\epsilon|\boldsymbol{q}_\mathrm{s}|)g_{i,1}^\mathrm{c}(\phi),\\
& \dot{q}_{i,\mathrm{s}}^{\mathrm{I}}+q_{i,\mathrm{s}}^{\mathrm{R}}r_i\Omega  = \mathrm{Re}(\lambda_i^{\mathcal{E}})q_{i,\mathrm{s}}^{\mathrm{I}}+\mathrm{Im}(\lambda_i^{\mathcal{E}})q_{i,\mathrm{s}}^{\mathrm{R}}\nonumber\\
& +\sum_{(\boldsymbol{l},\boldsymbol{j})\in\mathcal{R}_i}\mathrm{Im}\left(\gamma(\boldsymbol{l},\boldsymbol{j})\boldsymbol{q}_s^{\boldsymbol{l}}\bar{\boldsymbol{q}}_s^{\boldsymbol{j}}\right)+\epsilon\mathrm{Im}(f_i)+\mathcal{O}(\epsilon|\boldsymbol{q}_\mathrm{s}|)g_{i,2}^\mathrm{c}(\phi),
\end{align}
where $g_{i,1}^\mathrm{c}$ and $g_{i,2}^\mathrm{c}$ are the first and the second component of the $\boldsymbol{g}_{i}^\mathrm{c}$, and we have $\dot{\phi}=\Omega.$ After some algebraic manipulations, we obtain~\eqref{eq:ode-reduced-slow-cartesian}.

The proof of statements (ii) and (iii) is analogous to that given in Section~\ref{proofoftheorem}. Here we focus on the persistence of the hyperbolic periodic orbits of the leading-order truncated dynamics under the addition higher order terms.

Let $\mathbf{x}=(q_{1,\mathrm{s}}^{\mathrm{R}},q_{1,\mathrm{s}}^{\mathrm{I}},\cdots,q_{m,\mathrm{s}}^{\mathrm{R}},q_{m,\mathrm{s}}^{\mathrm{I}})$. Equation~\eqref{eq:ode-reduced-slow-cartesian} can be rewritten as
\begin{equation}
\label{eq:ode-qc}
    \dot{\mathbf{x}}=\mathbf{A}\mathbf{x}+\mathbf{F}(\mathbf{x})+\epsilon \mathbf{F}^\mathrm{ext}+\mathcal{O}(\epsilon|\mathbf{x}|)\mathbf{G}(\phi),\quad \dot{\phi}=\Omega
\end{equation}
where $\mathbf{A}= \diag(\boldsymbol{A}_1,\cdots,\boldsymbol{A}_m)$ with
\begin{gather}
\boldsymbol{A}_i=\begin{pmatrix}\mathrm{Re}(\lambda_i^{\mathcal{E}}) & r_i\Omega-\mathrm{Im}(\lambda_i^{\mathcal{E}})\\
\mathrm{Im}(\lambda_i^{\mathcal{E}})-r_i\Omega & \mathrm{Re}(\lambda_i^{\mathcal{E}})\end{pmatrix}\label{eq:ai},\\
\mathbf{F}(\mathbf{x})=\begin{pmatrix}\sum_{(\boldsymbol{l},\boldsymbol{j})\in\mathcal{R}_1}\begin{pmatrix}\mathrm{Re}\left(\gamma(\boldsymbol{l},\boldsymbol{j})\boldsymbol{q}_s^{\boldsymbol{l}}\bar{\boldsymbol{q}}_s^{\boldsymbol{j}}\right)\\\mathrm{Im}\left(\gamma(\boldsymbol{l},\boldsymbol{j})\boldsymbol{q}_s^{\boldsymbol{l}}\bar{\boldsymbol{q}}_s^{\boldsymbol{j}}\right)\end{pmatrix}\\\vdots\\\sum_{(\boldsymbol{l},\boldsymbol{j})\in\mathcal{R}_m}\begin{pmatrix}\mathrm{Re}\left(\gamma(\boldsymbol{l},\boldsymbol{j})\boldsymbol{q}_s^{\boldsymbol{l}}\bar{\boldsymbol{q}}_s^{\boldsymbol{j}}\right)\\\mathrm{Im}\left(\gamma(\boldsymbol{l},\boldsymbol{j})\boldsymbol{q}_s^{\boldsymbol{l}}\bar{\boldsymbol{q}}_s^{\boldsymbol{j}}\right)\end{pmatrix}\end{pmatrix},
\end{gather}
$\mathbf{F}^\mathrm{ext}=\left(\mathrm{Re}(f_1),\mathrm{Im}(f_1),\cdots,\mathrm{Re}(f_m),\mathrm{Im}(f_m)\right)$ is a constant vector, and $\mathbf{G}(\phi)$ is a periodic function. Let $\mathbf{x}^{\star}$ be a hyperbolic fixed point of the leading-order truncation, i.e.,
\begin{equation}
\label{eq:qpc-ep}
    \mathbf{A}\mathbf{x}^\star+\mathbf{F}(\mathbf{x}^\star)+\epsilon \mathbf{F}^\mathrm{ext}=0,
\end{equation}
and let the corresponding periodic orbit in the parameterization coordinates be $\boldsymbol{p}^\star(t)$ (see equation~\eqref{eq:cartesian-form}).
We will prove the persistence of this hyperbolic periodic orbit with the perturbation of $\mathcal{O}(\epsilon|\mathbf{x}|)\mathbf{G}(\phi)$ via the following three steps: (i) we estimate the magnitude of the fixed point $\mathbf{x}^\star$; (ii) we introduce transverse coordinates $\mathbf{y}=\mathbf{x}-\mathbf{x}^\star$ and then show the dynamics of $\mathbf{y}$ is slow relative to $\dot{\phi}=\Omega$; (iii) we use the method of averaging to demonstrate that the hyperbolic fixed point $\mathbf{y}=0$ is perturbed as a periodic orbit $\mathbf{y}_\mathrm{p}(t)$ of the same hyperbolicity as that of $\mathbf{y}=0$ (see Guckenheimer \& Holmes~\cite{guckenheimer2013nonlinear}). Hence, it is clear that the corresponding trajectory $\boldsymbol{p}_\mathrm{p}(t)$ that perturbed from $\boldsymbol{p}^\star(t)$ is also a periodic orbit of the same hyperbolicity.

\begin{sloppypar}
\textbf{Step 1:} Let the lowest order of nonlinearity in $\mathbf{F}(\mathbf{x})$ be $k$, assume that \begin{equation}
\label{eq:res-assumption}    
\boldsymbol{\lambda}^{\mathcal{E}}-\mathrm{i}\boldsymbol{r}\Omega=\epsilon^q\boldsymbol{t}, \quad \bar{\boldsymbol{\lambda}}^{\mathcal{E}}+\mathrm{i}\boldsymbol{r}\Omega=\epsilon^q\bar{\boldsymbol{t}}
\end{equation} for $q=1-\frac{1}{k}$ and some $\boldsymbol{t}\sim\mathcal{O}(1)$. Now, we can show that $\mathbf{x}^\star\sim\mathcal{O}(\epsilon^{1-q})$.
Indeed, substituting assumption~\eqref{eq:res-assumption} into equation~\eqref{eq:ai} gives $\mathbf{A}=\epsilon^q\hat{\mathbf{A}}$, where $\hat{\mathbf{A}}= \diag(\hat{\boldsymbol{A}}_1,\cdots,\hat{\boldsymbol{A}}_m)$ with
\begin{equation}
\hat{\boldsymbol{A}}_i=\begin{pmatrix}\mathrm{Re}(t_i) & -\mathrm{Im}(t_i)\\
\mathrm{Im}(t_i) & \mathrm{Re}(t_i)\end{pmatrix}.\label{eq:ai-hat}\end{equation} In addition, we have $\mathbf{F}(\mathbf{x})= \hat{\mathbf{F}}(\mathbf{x})\cdot \mathbf{x}^{\otimes k}$, where $\mathbf{x}^{\otimes k}:=\mathbf{x}\otimes \cdots\otimes \mathbf{x}$ ($k$-times) and $k\geq2$ because $|\boldsymbol{l}|+|\boldsymbol{j}|\geq2$, and $\hat{\mathbf{F}}(\mathbf{x})=\hat{\mathbf{F}}(0)+\hat{\mathbf{F}}_1\mathbf{x}+\mathcal{O}(|\mathbf{x}|^2)$ with $\hat{\mathbf{F}}_1$ appropriately defined. Thus, introducing the transformation
\begin{equation}
\label{eq:xtoxhat}
    \mathbf{x}=\mu\hat{\mathbf{x}},\quad \mu=\epsilon^{1-q},
\end{equation}
equation~\eqref{eq:qpc-ep} can be rewritten as \begin{equation}\epsilon\mathcal{F}(\hat{\mathbf{x}}^\star,\mu)=0,\end{equation} where
\begin{equation}
\label{eq:fcal}
    \mathcal{F}(\hat{\mathbf{x}}^\star,\mu)=\hat{\mathbf{A}}\hat{\mathbf{x}}^\star+\hat{\mathbf{F}}(\mu\hat{\mathbf{x}}^\star)\cdot (\hat{\mathbf{x}}^\star)^{\otimes k}+\mathbf{F}^{\mathrm{ext}}.
\end{equation}
Since $\hat{\mathbf{x}}^\star$ is a hyperbolic fixed point, the partial derivative of $\mathcal{F}$ with respect to the first argument, evaluated at $(\hat{\mathbf{x}}^\star,\mu)$, is invertible. Then, the implicit function theorem implies that $\hat{\mathbf{x}}^\star$ depends on $\mu$ smoothly and we have $\hat{\mathbf{x}}^\star=\hat{\mathbf{x}}^\star(\mu)$, i.e., $\mathbf{x}^\star=\mu\hat{\mathbf{x}}^\star(\mu)$. In particular, $\hat{\mathbf{x}}^\star(0)=\hat{\mathbf{x}}^\star(\mu)+\mathcal{O}(\mu)$. Furthermore, since the invertible matrix $\hat{\mathbf{A}}\sim\mathcal{O}(1)$, $\mathbf{F}^\mathrm{ext}\sim\mathcal{O}(1)$, and $\hat{\mathbf{F}}(0)$ can be made arbitrarily small by scaling the eigenvectors of the master spectral subspace $\mathcal{E}$, we
infer from $\mathcal{F}(\hat{\mathbf{x}}^\star,\mu)=0$ that for small enough values of $\mu$, $\hat{\mathbf{x}}^\star(\mu)\sim\mathcal{O}(1)+\mathcal{O}(\mu)$, i.e., $\mathbf{x}^\star\sim\mathcal{O}(\epsilon^{1-q})$.
\end{sloppypar}

\textbf{Step 2:} Following the analysis in Step 1 (see~\eqref{eq:xtoxhat}-\eqref{eq:fcal}), equation~\eqref{eq:ode-qc} can be rewritten as
\begin{equation}
    \mu\dot{\hat{\mathbf{x}}}=\epsilon \mathcal{F}(\hat{\mathbf{x}},\mu)+\mathcal{O}(\epsilon |\mu\hat{\mathbf{x}}|)\mathbf{G}(\phi),\quad\dot{\phi}=\Omega.
\end{equation}
The first equation above can be simplified as
\begin{equation}
    \dot{\hat{\mathbf{x}}} = \epsilon^q\mathcal{F}(\hat{\mathbf{x}},\mu)+\mathcal{O}(\epsilon |\hat{\mathbf{x}}|)\mathbf{G}(\phi).
\end{equation}
Letting $\mathbf{y}=\hat{\mathbf{x}}-\hat{\mathbf{x}}^\star$, substituting $\hat{\mathbf{x}}=\mathbf{y}+\hat{\mathbf{x}}^\star$ into the above equation, performing Taylor expansion around the fixed point $\hat{\mathbf{x}}^\star$, and utilizing the fact that $\mathcal{F}(\hat{\mathbf{x}}^\star,\mu)=0$, we obtain
\begin{align}
\dot{\mathbf{y}}&=\epsilon^q\mathcal{F}(\mathbf{y}+\hat{\mathbf{x}}^\star,\mu)+\mathcal{O}(\epsilon |\mathbf{y}+\hat{\mathbf{x}}^\star|)\mathbf{G}(\phi)\nonumber\\
& = \epsilon^q D_1\mathcal{F}(\hat{\mathbf{x}}^\star,\mu)\mathbf{y}+\epsilon^q\mathcal{O}(|\mathbf{y}|^2)+\mathcal{O}(\epsilon |\hat{\mathbf{x}}^\star+\mathbf{y}|)\mathbf{G}(\phi).
\end{align}
where $D_1\mathcal{F}$ denotes the partial derivative of $\mathcal{F}$ with respect to its first argument. Next we introduce the transformation $\mathbf{y}=\epsilon^r\hat{\mathbf{y}}$ for some $r>0$ and obtain
\begin{align}
\dot{\hat{\mathbf{y}}}& =\epsilon^q D_1\mathcal{F}(\hat{\mathbf{x}}^\star,\mu)\hat{\mathbf{y}}+\epsilon^{q+r}\mathcal{O}(|\hat{\mathbf{y}}|^2)\nonumber\\
& +\mathcal{O}(\epsilon^{1-r} |\hat{\mathbf{x}}^\star+\epsilon^r\hat{\mathbf{y}}|)\mathbf{G}(\phi)\nonumber\\
 & = \epsilon^q D_1\mathcal{F}(\hat{\mathbf{x}}^\star,\mu))\hat{\mathbf{y}}+\epsilon^{q+r}\mathcal{O}(|\hat{\mathbf{y}}|^2)\nonumber\\
 & +\epsilon^{1-r}\mathcal{O}(|\hat{\mathbf{x}}^\star|)\mathbf{G}(\phi)+\epsilon\mathcal{O}(|\hat{\mathbf{y}}|)\mathbf{G}(\phi)\label{eq:yhat1}.
\end{align}
Now, we choose $r$ such that $q+r=1-r$, i.e.,
\begin{equation}
    r=\frac{1-q}{2}=\frac{1}{2k}.
\end{equation}
Then, equation~\eqref{eq:yhat1} is simplified to yield
\begin{align}
\dot{\hat{\mathbf{y}}} = & \epsilon^q D_1\mathcal{F}(\hat{\mathbf{x}}^\star,\mu)\hat{\mathbf{y}}+\epsilon^{\frac{1+q}{2}}\left(\mathcal{O}(|\hat{\mathbf{y}}|^2)+\mathcal{O}(|\hat{\mathbf{x}}^\star|)\mathbf{G}(\phi)\right)\nonumber\\
& +\epsilon\mathcal{O}(|\hat{\mathbf{y}}|)\mathbf{G}(\phi)\label{eq:yhat2}.
\end{align}

\textbf{Step 3}: Defining $\nu_1=\epsilon^q$ and $\nu_2=\sqrt{\mu}$, we rewrite equation~\eqref{eq:yhat2} as 
\begin{equation}
\dot{\hat{\mathbf{y}}} = \nu_1 D_1\mathcal{F}(\hat{\mathbf{x}}^\star,\nu_2^2)\hat{\mathbf{y}}+\nu_1\nu_2\mathbf{H}(\nu_2,\hat{\mathbf{y}},\phi)
\end{equation}
where $\mathbf{H}(\nu_2,\hat{\mathbf{y}},\phi)=\mathcal{O}(|\hat{\mathbf{y}}|^2)+(\mathcal{O}(|\hat{\mathbf{x}}^\star|)+\nu_2\mathcal{O}(|\hat{\mathbf{y}}|))\mathbf{G}(\phi)$. We define $\mathbf{A}_0=D_1\mathcal{F}(\hat{\mathbf{x}}^\star,0)$, which is $\nu_2^2$-close to the Jacobian $D_1\mathcal{F}(\hat{\mathbf{x}}^\star,\nu_2^2)$ and, hence, these two matrices share the same hyperbolicity for small-enough values of $\nu_2$. Following the arguments of the proof of the averaging theorem in~\cite{guckenheimer2013nonlinear}, we consider two flows as follows
\begin{gather}
    \dot{\hat{\mathbf{y}}} = \nu_1 \mathbf{A}_0\hat{\mathbf{y}},\quad\dot{\phi}=\Omega,\label{eq:linearflow}\\
    \dot{\hat{\mathbf{y}}} = \nu_1 D_1\mathcal{F}(\hat{\mathbf{x}}^\star,\nu_2^2)\hat{\mathbf{y}}+\nu_1\nu_2\mathbf{H}(\nu_2,\hat{\mathbf{y}},\phi),\,\,\dot{\phi}=\Omega.\label{eq:nonlinaerflow}
\end{gather}
Let $T=2\pi/(r_\mathrm{d}\Omega)$, where $r_\mathrm{d}$ has been defined as the largest common divisor for the set of of rational numbers $\{r_i\}_{i=1}^m$ in the proof of Theorem~\ref{sec:proof-theo-polar}. We define the period-$T$ maps of the above two flows as $\mathcal{P}_{0}$ and $\mathcal{P}_{\nu}$ respectively. Furthermore, we define $\mathcal{H}_0$ and $\mathcal{H}_\nu$ as the zero functions associated with the fixed points of the Poincar\'e maps $\mathcal{P}_{0}$ and $\mathcal{P}_{\nu}$ as
\begin{gather}
    \mathcal{H}_0(\hat{\mathbf{y}},\nu_1)=\frac{1}{\nu_1}(\mathcal{P}_{0}\hat{\mathbf{y}}-\hat{\mathbf{y}}),\\
    \mathcal{H}_\nu(\hat{\mathbf{y}},\nu_1,\nu_2)=\frac{1}{\nu_1}(\mathcal{P}_{\nu}\hat{\mathbf{y}}-\hat{\mathbf{y}}).
\end{gather}
From the linear flow~\eqref{eq:linearflow}, we obtain
\begin{equation}
    \mathcal{P}_{0}:\hat{\mathbf{y}}\mapsto e^{\nu_1\mathbf{A}_0T}\hat{\mathbf{y}}.
\end{equation}
Then, $\hat{\mathbf{y}}=\mathbf{0}$ is a fixed point of the map $\mathcal{P}_0$ and as a result, also the zero of the function $\mathcal{H}_0(\hat{\mathbf{y}},\nu_1)$. In addition
\begin{equation}
    \lim_{\nu_1\to0}\partial_{\hat{\mathbf{y}}}\mathcal{H}_0=\lim_{\nu_1\to0}\frac{e^{\nu_1\mathbf{A}_0T}-\mathbb{I}}{\nu_1}=\mathbf{A}_0T,
\end{equation}
which is invertible. Furthermore, since $\mathcal{P}_\nu$ is $\nu_1\nu_2$-close to $\mathcal{P}_0$, we also have
\begin{equation}
    \lim_{(\nu_1,\nu_2)\to\mathbf{0}}\partial_{\hat{\mathbf{y}}}\mathcal{H}_\nu=\lim_{(\nu_1,\nu_2)\to\mathbf{0}}\frac{\mathcal{P}_\nu-\mathbb{I}}{\nu_1}=\mathbf{A}_0T.
\end{equation}
Now, by the implicit function theorem, the trivial fixed point $\hat{\mathbf{y}}=0$ of the map $\mathcal{P}_{0}$ is perturbed as a nontrivial fixed point of the map $\mathcal{P}_\nu$ under the addition of the higher-order terms for small-enough values of $\nu_1,\nu_2$. In addition, the nontrivial fixed point shares the same hyperbolicity as that of the trivial one. Therefore, we obtain a periodic orbit $\hat{\mathbf{y}}_\mathrm{p}(t)$ to~\eqref{eq:nonlinaerflow}, and then $\mathbf{y}_\mathrm{p}(t)=\epsilon^r\hat{\mathbf{y}}_\mathrm{p}(t)$.

\subsection{Settings of COCO}
\label{sec:apdix-coco}
Some settings are tuned as follows to speed up the FRC computation in examples~\ref{sec:vonKarmanBeam}-\ref{sec:vonKarmanPlate} with \texttt{po} toolbox of \textsc{coco}
\begin{itemize}
\item \begin{sloppypar}Disable mesh adaptation. When the mesh is changed, \textsc{coco} will reconstruct the continuation problem, which could be time-consuming if the problem is of high dimension. \textcolor{black}{We have disabled mesh adaption in the von K\'arm\'an beam example the von K\'arm\'an plate example. However, we found that the default mesh is not able to produce accurate results in the Timoshenko beam example when the deformation amplitude is large. So we allow for mesh adaptation every ten continuation steps in the Timoshenko beam example};\end{sloppypar}
\item Disable \texttt{MXCL}. The collocation toolbox in \textsc{coco} has a posteriori error estimator to evaluate the accuracy of obtained numerical solution. If the error exceeds a threshold value, \textsc{coco} will stop the continuation run. An often used technique to avoid the occurrence of \texttt{MXCL} is providing a fine mesh and adaptively changing the mesh after a few continuation steps. It is noted that the error in the estimator is based on the Euclidean norm, which means that \texttt{MXCL} will be triggered easily for high-dimensional problems. In \textcolor{black}{the von K\'arm\'an beam and plate examples}, we use a fixed (default) mesh with ten subintervals. Five base points and four collocation nodes are used in each subinterval. \textcolor{black}{In the Timoshenko beam example, the \texttt{MXCL} is also disabled};
\item Increase maximum step size and residual. We use atlas the 1d algorithm in \textsc{coco} to perform continuation in this paper. The default maximum continuation step size is 0.5 and maximum residual allowed for predictor is 0.1. The step size in atlas 1d measures distances in the Euclidean norm of \emph{all} continuation variables and parameters. So we allow large continuation step size for high-dimensional continuation problems. In addition, we increase the maximum residual for the predictor as well to an effective end. Here we have increased the maximum step size and residual to 100 and 10 respectively in the von K\'arm\'an beam example. These two thresholds are set to be \textcolor{black}{1000 and 10000 in the Timoshenko beam example, and} 500 and 50 in the von K\'arm\'an plate example. In the continuation runs of the von K\'arm\'an beam example, the residual of the predictor hit the threshold 10 in some continuation steps and the observed maximum continuation step size is about 30, which is much larger than the default.~\textcolor{black}{In the continuation run of the Timoshenko beam example, there are some continuation steps with step size more than 900, which is also much larger than the default.} In the continuation runs of the von K\'arm\'an plate example, the observed maximum residual of predictor is slightly larger than one while the observed maximum continuation step size is about 34, which is again much larger than the default.
\end{itemize}

Note that the 2020 March release of \textsc{coco} also supports $k$-dimensional atlas algorithm where step size measures distance of with Euclidean norm of \emph{active continuation parameters} only~\cite{dankowicz2020multidimensional}. With $(\boldsymbol{x}_0/(2n),\dot{\boldsymbol{x}}_0/(2n),\Omega,T)$ as active continuation parameters, we also performed continuation using \texttt{po} with atlas-kd for the von K\'arm\'an beam discretized with 20 elements and 58 DOF. The default maximum continuation step size (equal to one) in atlas-kd is utilized. We have the decreased minimum continuation step size to $10^{-4}$ such that gap between adjacent charts is not encountered (see~\cite{dankowicz2013recipes} for more details). We have set $\theta<0.5$ in the algorithm such that the predictor in atlas-kd is consistent with the one in atlas-1d. Given the residual of predictor is evaluated as the same way as atlas-1d, we have also increased the maximum residual for predictor to 10.

The continuation run with atlas-1d generates the FRC with 175 points in about six and half hours for the discrete beam with 20 elements. The observed maximum continuation step size in this run is about 30. In contrast, the continuation run with atlas-kd generates the FRC with 248 points in about 11 hours. The residual of predictor in this run again hits the threshold 10 in some continuation steps, and the observed maximum continuation step size is just 0.1. When the maximum residual for predictor is increased to 100, the continuation run with atlas-kd generates the FRC with 107 points in about six hours, and the observed maximum continuation step size is increased to 0.17. It follows that the computational times for the two atlas algorithms are comparable if we allow large continuation step size in the atlas-1d algorithm.

\section*{Declarations}
\section*{Conflict of interest}
The authors declare that they have no conflict of interest.

\section*{Data availability}
The data used to generate the numerical results included in this paper are available from the corresponding author on request.

\section*{Code availability}
\begin{sloppypar}
The code used to generate the numerical results   included in this paper are available as part of the open-source Matlab script SSMTool 2.1 under~\url{https://github.com/haller-group/SSMTool-2.1}.
\end{sloppypar}

\bibliography{manuscript_revised_v1.bbl} 
\bibliographystyle{spmpsci}

\end{document}